\numberwithin{equation}{section}
\newtheorem{theorem}{Theorem}[subsection]
\newtheorem{cor}[theorem]{Corollary}
\newtheorem{lem}[theorem]{Lemma}
\newtheorem{prop}[theorem]{Proposition}
\newtheorem{rem}[theorem]{Remark}
\newtheorem{defi}[theorem]{Definition}
\newtheorem{conj}[theorem]{Conjecture}
\newtheorem{question}[theorem]{Question}
\newcommand{\bo}{\mathcal{O}}
\newcommand{\R}{\mathbb{R}}
\newcommand{\C}{\mathcal{C}}
\newcommand{\Hy}{\mathbb{H}^3}
\newcommand{\Z}{\mathbb{Z}}
\newcommand{\T}{\mathcal{T}}
\newcommand{\ds}{\displaystyle}
\newcommand{\lh}{\mathcal{H}^1}
\newcommand{\Ima}{\text{Im}(H_2(M) \rightarrow H_2(M, \partial M))}
\newcommand{\fma}{\text{Im}(H^1_0(M) \rightarrow H^1(M))}
\newcommand{\inj}{\textnormal{inj}}
\newcommand{\vol}{\textnormal{vol}}
\newcommand{\sys}{\textnormal{sys}}
\newcommand{\mt}{M_{\tau}}
\newcommand{\tn}{\textnormal}
\newcommand{\ok}{\Omega^k}
\newcommand{\okl}{\Omega^{k-1}}
\newcommand{\okc}{\Omega^k_0}
\newcommand{\okhc}{\Omega^{k+1}_0}
\newcommand{\pc}{Poincar\'e }
\newcommand{\suchthat}{\,|\,}
\title{HARMONIC FORMS, MINIMAL SURFACES AND NORMS ON COHOMOLOGY OF HYPERBOLIC $3$-MANIFOLDS}
\author{Xiaolong Hans Han}
\date{}
\begin{document}
\maketitle

\begin{abstract}
We bound the $L^2$-norm of an $L^2$ harmonic $1$-form in an orientable cusped hyperbolic $3$-manifold $M$ by its topological complexity, measured by the Thurston norm, up to a constant depending on $M$. This generalizes two inequalities of Brock and Dunfield. We also study the sharpness of the inequalities in the closed and cusped cases, using the interaction between minimal surfaces and harmonic forms. We unify various results by defining two functionals on closed and cusped hyperbolic $3$-manifolds and formulate several questions and conjectures.
\end{abstract}

\tableofcontents

\section{Introduction}
\subsection{Motivation and previous results}
A cusped manifold is a complete noncompact hyperbolic manifold with finite volume. All manifolds are assumed oriented unless stated otherwise. By Mostow rigidity, the topology of a closed or cusped hyperbolic manifold $M$ of dimension at least $3$ determines its geometry. Effective geometrization attempts to seek qualitative and quantitative connections between the topological and geometric invariants of the manifold. For example, given the fundamental group, what can we say about the injectivity radius, diameter, or $2$-systole of the manifold (see \cite{injFundGroupWhite, finiteEigenInjIanJuan, largeEBballHeeGenusWhite, systole2Hyperbolic3})? From a conjecture of Bergeron and Venkatesh \cite{bvAsymptoticTorsionArith} and similar conjectures proposed independently by L\"{u}ck and by L\^{e} in \cite{leConjecture, luckSurveyTorsion}, one can extract the volume of a closed (arithmetic) hyperbolic $3$-manifold by the following: 
\begin{conj}\tn{(\cite{bvAsymptoticTorsionArith})}
	Let $M_n$ be a sequence of congruence covers of a fixed arithmetic hyperbolic $3$-manifold $M$. Denote the degree of the cover by $[\pi_1 M: \pi_1 M_n]$. Then we have 
	\begin{equation}
		\ds \tn{vol}(M)=6\pi \lim_{n\rightarrow \infty} \frac{\log |H_1(M_n)_{tor}|}{[\pi_1 M: \pi_1 M_n]},
	\end{equation}
where $H_1(\cdot)$ is the first homology and $(\cdot)_{tor}$ is the torsion part. 
\end{conj}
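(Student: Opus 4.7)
This is an open conjecture; we outline the Bergeron--Venkatesh program, which establishes the analogue for strongly acyclic local systems and motivates the stated form with trivial coefficients. The overall plan has three steps: relate $\log |H_1(M_n)_{\tn{tor}}|$ to a Reidemeister torsion, convert to analytic torsion via the Cheeger--M\"uller theorem, and identify the normalised limit as the $L^2$-analytic torsion density of $\Hy$.

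Combinatorial torsion with integral coefficients can be written in terms of the orders of $H_k(M_n;\Z)_{\tn{tor}}$ together with a regulator depending on the free parts. By Poincar\'e duality on a closed oriented $3$-manifold one has $|H_1(M_n)_{\tn{tor}}| = |H_2(M_n)_{\tn{tor}}|$, so for the trivial representation the torsion subgroups cancel in the standard Reidemeister torsion formula and $|H_1|_{\tn{tor}}$ cannot be read off directly. Bergeron--Venkatesh circumvent this by twisting with a strongly acyclic coefficient system (e.g.\ the adjoint of a lift of $\pi_1 M \hookrightarrow \tn{PSL}_2(\mathbb{C})$), whose integral Reidemeister torsion is governed by $|H_1|_{\tn{tor}}$ up to a regulator that is $o([\pi_1 M:\pi_1 M_n])$ along a congruence tower. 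By Cheeger--M\"uller this equals the Ray--Singer analytic torsion $T(M_n)$, and a representation-theoretic trace-formula computation on $\Hy$ (Fried, Lott) yields the $L^2$-analytic torsion density
\begin{equation}
t^{(2)}(\Hy) = -\frac{1}{6\pi},
\end{equation}
which is the source of the constant in the conjecture. The desired limit then becomes the spectral convergence
\begin{equation}
\lim_{n\to\infty} \frac{T(M_n)}{[\pi_1 M:\pi_1 M_n]} = -t^{(2)}(\Hy)\cdot\vol(M) = \frac{\vol(M)}{6\pi}.
\end{equation}

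The hardest step, and the principal obstacle, is this last convergence. Analytic torsion, unlike $L^2$-Betti numbers, is sensitive to \emph{small} eigenvalues of $\Delta_1$, and one must rule out an anomalous accumulation of tiny eigenvalues along the tower. For congruence covers a uniform spectral gap (property $(\tau)$, Sarnak--Xue bounds) combined with Benjamini--Schramm convergence of $M_n$ to $\Hy$ is enough when the coefficient system is strongly acyclic: this is the case Bergeron--Venkatesh actually resolve. For the \emph{trivial} coefficients as stated, strong acyclicity fails because of $L^2$-harmonic $1$-forms and the contribution of $b_1(M_n)$ (precisely the objects studied in the body of the present paper), so a quantitative L\"uck-type approximation for torsion, controlling both the small $1$-form spectrum and the integral regulators uniformly in $n$, is required. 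Supplying this ingredient is the essential missing step, and is why the conjecture remains open in the form stated here.
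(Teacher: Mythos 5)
This statement is the Bergeron--Venkatesh conjecture; the paper states it purely as motivation in the introduction and offers no proof, so there is no argument of the paper to compare yours against. You are right not to claim a proof, and your outline is essentially the standard account of why the conjecture is expected and why it remains open: relate $\log|H_1(M_n)_{\tn{tor}}|$ to Reidemeister torsion, pass to analytic torsion via Cheeger--M\"uller, identify the normalised limit with the $L^2$-torsion density $-\tfrac{1}{6\pi}$ of $\Hy$, and observe that the convergence of normalised analytic torsion is the obstruction, resolved by Bergeron--Venkatesh only for strongly acyclic local systems.

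Two corrections of detail. First, for a closed oriented $3$-manifold $H_2(M_n;\Z)\cong H^1(M_n;\Z)$ is torsion-free by Poincar\'e duality and universal coefficients, so the claim $|H_1(M_n)_{\tn{tor}}|=|H_2(M_n)_{\tn{tor}}|$ is false and the torsion subgroups do not ``cancel'' in the trivial-coefficient Reidemeister torsion; they contribute a single factor $|H_1(M_n)_{\tn{tor}}|^{\pm 1}$. The genuine difficulty with trivial coefficients is instead the regulator: the covolume of the lattice $H^1(M_n;\Z)$ inside the space of harmonic $1$-forms with the $L^2$ inner product, which one must show is subexponential in the covering degree. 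This is precisely the point of contact with the present paper --- the Bergeron--Seng\"un--Venkatesh comparison of the $L^2$ and Thurston norms quoted in the introduction, and the Brock--Dunfield inequalities and their cusped generalisation proved here, are designed to control exactly such regulators. Second, your final displayed limit should be stated for $\log T(M_n)$ rather than $T(M_n)$ (or with $T$ understood as the logarithm of the Ray--Singer torsion); as written the normalisation does not match the asserted value $\vol(M)/(6\pi)$.
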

For recent progress and other related works on this conjecture, see \cite{leHomoTorGrowthMeasure,luckW,bdInjectIntegerHomo}. In \cite{bsvTorsionHomologyArith}, Bergeron, \c{S}eng\"un and Venkatesh make another conjecture that $H_2(M_n)$ can be generated by integral cycles in $H_2(M_n, \R)$ with low topological complexity, as measured by the Thurston norm $\|\cdot\|_{Th}$ (see \Cref{topologyL^2} for detailed definitions). On the other hand, in a general closed Riemannian manifold $M$, one can measure the complexity of a cycle $\phi \in H^1(M; \R)$ by the geometric $L^2$-norm: 
\begin{center}
	$\ds \|\phi\|_{L^2}^2= \int_M \alpha \wedge *\alpha$, 
\end{center}
where $\alpha$ is the harmonic representative of $\phi$ given by the classical Hodge theory. The $L^2$-norms and harmonic forms also enter the picture via \textit{regulators} in \cite{bvAsymptoticTorHomo} (Calegari and Venkatesh [\citenum{cvTorsionLanglands}, 5.3] also generalize the regulator and the analytic torsion to the noncompact setting). The $H_j$-\textit{regulator} of $M$ is the volume of $H_j(M, \R)$ modulo the free part of $H_j(M, \Z)$ with respect to the metric defined by harmonic forms. That is, 
\[
\ds R_j(M)=\frac{\det(\int_{\gamma_p} \alpha_q)}{\sqrt{\det \langle \alpha_p, \alpha_q \rangle}},
\]
where the $\gamma_p \in H_j(M, \Z)$ project to a basis for $ H_j(M, \Z)/ H_j(M, \Z)_{tor}$ and the $\alpha_p$ form a basis for the $L^2$ harmonic $j$-forms on $M$. The celebrated theorems of Cheeger and M\"uller ([\citenum{cjAnalyticTorHeat}, Theorem 8.22], [\citenum{mwAnalyticTorRtor}, Theorem 10.22]) imply that in dimension $3$, 
\[
\ds |H_1(M, \Z)_{tor}| \cdot \frac{R_2^2(M)}{\vol(M)}=\frac{1}{T_{an}(M)},
\]
where $T_{an}(M)$ is the analytic torsion of $M$. In proving that the $H_2$-regulator can be controlled by betti number $b(\cdot)$, $R_2(M_n) <\!\!< \vol(M_n)^{Cb(M_n)}$ ([\citenum{bsvTorsionHomologyArith}, 2.5]), Bergeron, \c{S}eng\"un and Venkatesh proved a beautiful theorem regarding the Thurston norms and the $L^2$-norms: 
\begin{theorem}\tn{(\citenum{bsvTorsionHomologyArith}, Theorem 4.1)}
	If $M$ varies through a sequence of finite coverings of a fixed closed orientable hyperbolic $3$-manifold $M_0$, then we have 
	\begin{equation}
		\frac{C_1}{\vol(M)} \|\cdot\|_{Th} \leq \|\cdot \|_{L^2}\leq C_2\|\cdot\|_{Th} \text{ on } H^1(M;\R),
	\end{equation}
where $C_1$ and $C_2$ depend only on $M_0$. 
\end{theorem}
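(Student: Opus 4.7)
The plan is to prove the two inequalities separately, each by producing a concrete geometric object whose relevant norm can be compared; in both directions the bridge is a stable minimal surface representing (the Poincar\'e dual of) the class.

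For the upper bound $\|\cdot\|_{L^2}\leq C_2\|\cdot\|_{Th}$, I would first treat primitive integer classes $\phi\in H^1(M;\Z)$ and extend by bilinearity and continuity. Given such $\phi$, take a Thurston-norm minimizing embedded surface $\Sigma$ representing the Poincar\'e dual, and isotope it via Meeks--Simon--Yau to a stable minimal surface. The Gauss equation in constant curvature $-1$ gives $K_\Sigma\leq -1$, so Gauss--Bonnet yields $\tn{Area}(\Sigma)\leq 2\pi\|\phi\|_{Th}$. Build a Thom form $\omega\in[\phi]$ supported in a tubular neighborhood of $\Sigma$ of thickness $\epsilon_0$, where $\epsilon_0$ depends only on $\inj(M_0)$ and the universal $|A|$-bound for stable minimal surfaces in hyperbolic $3$-manifolds. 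A Fermi-coordinate computation gives $\|\omega\|_{L^2}^2\lesssim\tn{Area}(\Sigma)/\epsilon_0$; since $\alpha$ is $L^2$-minimizing in its cohomology class, I would conclude
\[
\|\alpha\|_{L^2}\leq\|\omega\|_{L^2}\lesssim\sqrt{\|\phi\|_{Th}/\epsilon_0}.
\]
Because $M$ is atoroidal, $\|\phi\|_{Th}\geq 1$ for primitive integer classes, so $\sqrt{\|\phi\|_{Th}}\leq\|\phi\|_{Th}$ and the bound becomes linear, giving a $C_2$ depending only on $M_0$ after extending by scaling.

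For the lower bound $\|\cdot\|_{L^2}\geq(C_1/\vol(M))\|\cdot\|_{Th}$, assume $\phi\in H^1(M;\Z)$ and write $\alpha=df$ for the harmonic map $f\colon M\to\R/\Z$ representing $\phi$. The co-area formula and Cauchy--Schwarz give
\[
\int_0^1\tn{Area}(f^{-1}(t))\,dt=\int_M|df|\,dV\leq\sqrt{\vol(M)}\,\|\alpha\|_{L^2},
\]
so some regular level set has area at most $\sqrt{\vol(M)}\,\|\alpha\|_{L^2}$. Replace it by a least-area representative $\Sigma^*$ in its homology class (existence and codimension-$1$ smoothness from geometric measure theory); the area only decreases. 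Then $\Sigma^*$ is a closed stable minimal surface with no sphere components (by asphericity of $M$) and no torus components (a closed minimal torus in a hyperbolic $3$-manifold is ruled out by combining $\chi=0$ with the identity $2\pi(-\chi)=\tn{Area}+\int|A|^2/2$). Applying Gauss--Bonnet, the Gauss equation $K_{\Sigma^*}=-1-|A|^2/2$, and the stability inequality with test function $\varphi\equiv 1$ (using $\tn{Ric}(\nu,\nu)=-2$ to get $\int|A|^2\leq 2\tn{Area}$), one obtains the key estimate
\[
-\chi(\Sigma^*)\leq \tn{Area}(\Sigma^*)/\pi.
\]
Combining, $\|\phi\|_{Th}\leq -\chi(\Sigma^*)\leq\tn{Area}(\Sigma^*)/\pi\leq(1/\pi)\sqrt{\vol(M)}\,\|\alpha\|_{L^2}$; since $\vol(M)\geq\vol(M_0)$ for covers of $M_0$, this rearranges to $\|\phi\|_{Th}\leq(\vol(M)/C_1)\,\|\alpha\|_{L^2}$ with $C_1=\pi\sqrt{\vol(M_0)}$.

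The main obstacle is the stability--Gauss--Bonnet inequality $-\chi(\Sigma^*)\leq\tn{Area}(\Sigma^*)/\pi$ for closed stable minimal surfaces in hyperbolic $3$-manifolds; this is where the constant-negative-curvature hypothesis is essential, as it combines $\tn{Ric}=-2g$, $K\leq -1$, and stability into a single sharp bound. Ruling out sphere and torus components of the area-minimizer is a subtle additional point. The remaining pieces---GMT existence of area-minimizing representatives, co-area, and the Thom-form construction with tubular neighborhoods of controlled thickness (via a uniform $|A|$-bound for stable minimal surfaces in covers of $M_0$)---are standard in spirit but require care.
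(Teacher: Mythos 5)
First, a point of context: the paper does not prove this statement---it is quoted from Bergeron--Seng\"un--Venkatesh purely as motivation---so the meaningful comparison is with the paper's proofs of the analogous inequalities (the Brock--Dunfield theorem and \Cref{maintheo}). Your lower bound is correct and is essentially the argument the paper uses for the left-hand side of its main theorem: your coarea/Cauchy--Schwarz step is the chain $\pi\|\phi\|_{Th}\leq\|\phi\|_{LA}=\|\phi\|_{L^1}\leq\sqrt{\vol(M)}\,\|\alpha\|_{L^2}$, and your stability-plus-Gauss--Bonnet estimate $-\chi(\Sigma^*)\leq \tn{Area}(\Sigma^*)/\pi$ is exactly the left half of \eqref{surface/area/genus}. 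You in fact obtain the stronger constant $\pi/\sqrt{\vol(M)}$, which implies the stated $C_1/\vol(M)$ because $\vol(M)\geq\vol(M_0)$ for covers.

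The upper bound, however, has a genuine gap. You claim the norm-minimizing minimal surface $\Sigma$ admits an embedded tubular neighborhood of thickness $\epsilon_0$ depending only on $\inj(M_0)$ and the universal bound on $|A|$ for stable minimal surfaces, and deduce $\|\omega\|_{L^2}^2\lesssim\tn{Area}(\Sigma)/\epsilon_0$. The $|A|$-bound and the ambient injectivity radius control the focal radius (so the normal exponential map is an immersion out to a uniform distance), but they do not control the \emph{normal injectivity radius}: sheets of $\Sigma$ that are intrinsically far apart can pass arbitrarily close to one another. This already happens in a fixed $M_0$ with $b_1\geq 2$: primitive classes such as $\phi+n\psi$ have Thurston norm, hence least area, growing without bound inside a manifold of fixed volume, so their minimal representatives must have sheets accumulating and any embedded tube collapses; your $L^2$ estimate for $\omega$, and hence the uniform $C_2$, does not follow as written. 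The gap is repairable in two ways. One can bound the multiplicity of the normal exponential map over an $\epsilon_0$-ball by $\tn{Area}(\Sigma)/(c\epsilon_0^2)$ via the monotonicity formula; this degrades the estimate to $\|\omega\|_{L^2}^2\lesssim\tn{Area}(\Sigma)^2/\epsilon_0^3$, which is still linear in $\|\phi\|_{Th}$ after taking square roots and is closer in spirit to the original argument of \cite{bsvTorsionHomologyArith}. Alternatively one can abandon the Thom form entirely and argue as the paper does for its own theorems: $\|\alpha\|_{L^2}^2=\int_S *\alpha\leq\|\alpha\|_{L^\infty}\,\tn{Area}(S)\leq 2\pi\|\phi\|_{Th}\,\|\alpha\|_{L^\infty}$, combined with the mean-value estimate of \Cref{dfv(r)} bounding $\|\alpha\|_{L^\infty}$ by a multiple of $\|\alpha\|_{L^2}$ depending only on $\inj(M)\geq\inj(M_0)$; this route needs no control on how $\Sigma$ sits inside $M$.
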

The above theorem is another example of effective geometrization, where one bounds the geometric complexity of $\phi \in H^1$ by its topological complexity. Brock and Dunfield in [\citenum{bdNorms}] remove the requirement that $M_n$ be covers of a fixed closed manifold $M_0$ and obtain the following inequalities:  
\begin{theorem}\textnormal{[\citenum{bdNorms}, Theorem 1.2]}
	For all closed orientable hyperbolic $3$-manifolds $M$ one has
	\begin{equation}\label{bdi}
		\ds \frac{\pi}{\sqrt{\vol(M)}} \| \cdot \|_{Th} \leq \| \cdot \|_{L^2} \leq \frac{10\pi}{\sqrt{\inj(M)}} \|\cdot \|_{Th} \text{ on } H^1(M;\R).
	\end{equation}
\end{theorem}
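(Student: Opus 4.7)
The two inequalities call for different techniques.  The upper bound follows from the Dirichlet-type minimization property of harmonic forms: it is enough to exhibit one cohomologous $1$-form of small $L^2$-norm.  The lower bound instead comes from the coarea formula combined with an area estimate for stable minimal surfaces provided by the stability inequality.  Both norms are $1$-homogeneous and continuous on $H^1(M;\R)$, so it suffices to establish each inequality on the dense subset of primitive integer classes $\phi\in H^1(M;\Z)$; for such a $\phi$ one automatically has $\|\phi\|_{Th}\geq 1$, since a closed orientable hyperbolic $3$-manifold contains no essential spheres or tori.

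\emph{Upper bound.}  Pick a Thurston-norm-realizing oriented embedded surface $S\subset M$ Poincar\'e dual to $\phi$, so $\chi_-(S)=\|\phi\|_{Th}$.  Any norm-minimizing surface is incompressible, so by Freedman--Hass--Scott / Schoen--Yau I may isotope $S$ to a stable minimal surface, still denoted by $S$.  The Gauss equation in background curvature $-1$ gives $K_S\leq -1$, and Gauss--Bonnet then produces the standard area bound $\tn{area}(S)\leq 2\pi\|\phi\|_{Th}$.  Now parametrize a tubular neighborhood of $S$ as $S\times(-\delta,\delta)$ for some $\delta\lesssim\inj(M)$, choose a smooth monotone $\rho(s)$ of the normal coordinate with $\rho\equiv 0$ on $s\leq-\delta$ and $\rho\equiv 1$ on $s\geq\delta$, and view $d\rho$ as the derivative of a map $M\to\R/\Z$; the resulting closed $1$-form $\eta$ satisfies $[\eta]=\phi$, $|\eta|\lesssim 1/\delta$, and is supported in a set of volume at most $2\delta\cdot\tn{area}(S)$.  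Hence
\[
\|\eta\|_{L^2}^2\ \lesssim\ \frac{\tn{area}(S)}{\delta}\ \lesssim\ \frac{\|\phi\|_{Th}}{\inj(M)}\ \leq\ \frac{\|\phi\|_{Th}^2}{\inj(M)},
\]
the last step using $\|\phi\|_{Th}\geq 1$.  Hodge theory gives $\|\alpha\|_{L^2}\leq\|\eta\|_{L^2}$, and careful tracking of the constants should recover the coefficient $10\pi/\sqrt{\inj(M)}$.

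\emph{Lower bound.}  Write the harmonic representative as $\alpha=df$ for a harmonic circle-valued function $f\colon M\to\R/\Z$ whose period class is $\phi$.  The coarea formula combined with Cauchy--Schwarz yields
\[
\int_0^1 \tn{area}(f^{-1}(t))\,dt\ =\ \|\alpha\|_{L^1}\ \leq\ \sqrt{\vol(M)}\,\|\alpha\|_{L^2}.
\]
Every regular level set is a smooth embedded surface Poincar\'e dual to $\phi$, so its area is bounded below by $A_{\min}(\phi):=\inf\{\tn{area}(T):T\text{ a smooth closed surface PD to }\phi\}$, giving $\|\alpha\|_{L^2}\geq A_{\min}(\phi)/\sqrt{\vol(M)}$.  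The proof thus reduces to the geometric claim $A_{\min}(\phi)\geq\pi\|\phi\|_{Th}$.  By compactness of integral currents and regularity of two-dimensional area-minimizers, the infimum is realized by a smooth stable minimal surface $S^{\ast}$ (possibly with multiplicity).  Plugging the constant test function into the stability inequality (with $\tn{Ric}(n,n)=-2$ in the background) gives $\int_{S^{\ast}}|B|^2\,dA\leq 2\,\tn{area}(S^{\ast})$, while the Gauss equation $K_{S^{\ast}}=-1-|B|^2/2$ together with Gauss--Bonnet yields $\tn{area}(S^{\ast})+\tfrac{1}{2}\int_{S^{\ast}}|B|^2\,dA=2\pi\chi_-(S^{\ast})$.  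Adding these gives $2\,\tn{area}(S^{\ast})\geq 2\pi\chi_-(S^{\ast})$, hence $A_{\min}(\phi)\geq\pi\chi_-(S^{\ast})\geq\pi\|\phi\|_{Th}$.

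\emph{Main obstacle.}  I expect the geometric inequality $A_{\min}(\phi)\geq\pi\|\phi\|_{Th}$ on the lower-bound side to be the most delicate piece.  It requires (a) the infimum to be realized by a smooth stable minimal representative, which depends on regularity theory for area-minimizing integral $2$-currents in a $3$-manifold, (b) care with possible multiplicity of the minimizer, and (c) the coupling of the stability inequality with Gauss--Bonnet described above.  The upper-bound construction is routine but requires honest bookkeeping, in particular the precise choice of normal-neighborhood width in terms of $\inj(M)$, to recover the stated coefficient $10\pi$.
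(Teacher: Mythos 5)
Your lower bound is essentially the paper's argument in different clothing: the coarea/Cauchy--Schwarz step is exactly how one proves $\pi\|\phi\|_{Th}\le\|\phi\|_{LA}\le\|\phi\|_{L^1}\le\sqrt{\vol(M)}\,\|\phi\|_{L^2}$, and the inequality $A_{\min}(\phi)\ge\pi\chi_-$ via stability plus Gauss--Bonnet is precisely the content of \Cref{surface/area/genus} (Hass's Lemma 6). That half is fine.

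The upper bound is where you depart from the actual proof, and there is a genuine gap. Your test form $\eta=\rho'(s)\,ds$ lives on an \emph{embedded} tubular neighborhood $S\times(-\delta,\delta)$, and the whole estimate $\|\eta\|_{L^2}^2\lesssim \mathrm{area}(S)/\delta$ requires $\delta$ to be (a constant times) the \emph{normal injectivity radius of the minimal surface $S$}, not the injectivity radius of $M$. These are not comparable: a least area surface can have two sheets passing at distance $\epsilon\ll\inj(M)$ from one another (nothing in the maximum principle, monotonicity, or the area bound $\mathrm{area}(S)\le 2\pi\|\phi\|_{Th}$ prevents this; think of a very distorted fiber in a fibered manifold, or of $n$ parallel copies of a fixed fiber representing $n\phi$, which are forced within distance $O(1/n)$ of each other in a fixed $M$). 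When the tube pinches, $\|\eta\|_{L^2}$ blows up like $\delta^{-1/2}$ with $\delta$ the actual tube width, and you get no bound in terms of $\inj(M)$. This is exactly why Brock and Dunfield do \emph{not} build a competitor form: they work with the harmonic representative $\alpha$ itself, write $\|\alpha\|_{L^2}^2=\int_S *\alpha\le\|\alpha\|_{L^\infty}\,\mathrm{area}(S)\le 2\pi\|\alpha\|_{L^\infty}\|\alpha\|_{Th}$, and then prove the mean-value-type inequality $\|\alpha\|_{L^\infty}\le \tfrac{5}{\sqrt{\inj(M)}}\|\alpha\|_{L^2}$ (via the explicit $v(r)$ of \Cref{dfv(r)}, which only needs an embedded ball of radius $\inj(M)$ about each point of $M$ --- something you always have). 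Dividing by $\|\alpha\|_{L^2}$ gives the constant $10\pi$. To salvage your route you would need a lower bound on the normal injectivity radius of a Thurston-norm-minimizing minimal surface in terms of $\inj(M)$ and $\|\phi\|_{Th}$, which you have not supplied and which I do not believe holds in the form you need. (A smaller remark: your final step $\|\phi\|_{Th}/\inj\le\|\phi\|_{Th}^2/\inj$ makes the chain inhomogeneous, so the passage from primitive integral classes to all of $H^1(M;\R)$ needs the uniform bound $\|\phi\|_{Th}\ge 2$ on primitive classes and a ray-by-ray argument; this is repairable but should be said.)
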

Compared to \cite{bsvTorsionHomologyArith}, Brock and Dunfield introduce the tools of minimal surfaces and harmonic expansions of harmonic functions in $\Hy$ for such inequalities. In particular, the proof of the left-hand inequality uses the uniform bounds on the principal curvatures for a stable minimal surface in closed hyperbolic $3$-manifolds and the Gauss-Bonnet theorem. By observation of Uhlenbeck \cite{closedminSurfaceHyperbolic,hass1995acy}, the Thurston norm is uniformly equivalent to the least area norm, which measures the smallest area of a surface representative dual to a cohomology class. Brock and Dunfield then point out the coincidence between the least area norm and the $L^1$-norm and use the Cauchy-Schwarz inequality to estimate the $L^2$-norm by the $L^1$-norm. We note that the left-hand inequality of \eqref{bdi} has been studied by very different techniques. Kronheimer and Mrowka \cite{kronheimer1997scalar} and Lin \cite{linMonopoleSpectral} use estimates on the solutions of the Seiberg-Witten equations. Bray and Stern \cite{bsScalarBoundary} use the harmonic level set techniques, the Bochner technique, and the traced Gauss equations. The proof of the right-hand inequality combines several ideas. It uses explicit computations for the series expansions of harmonic functions and an inequality bounding $L^{\infty}$-norm of a $1$-form on a ball $B$ by its $L^2$-norm on $B$, reminiscent of the mean value property of harmonic functions. Di Cerbo and Stern have done similar $L^\infty$ estimates in \cite{dicerbo2019harmonic,dicerbo2019price,dicerbo2020betti} for real and complex cusped manifolds with dimension $\geq 3$ with implicit constants. 

\subsection{A glimpse at the noncompact case}
It is natural to wonder whether a version of  \eqref{bdi} still holds when $M$ is noncompact. It turns out that the $L^2$-norm can depend on the geometry near infinity in a subtle way, which is not captured by topology. In the cusped case, several problems arise: 
\begin{enumerate}
	\item One immediate challenge is that the injectivity radius of $M$ is zero, and the right-hand inequality of  \eqref{bdi} is trivial. We have to find suitable geometric invariants to replace the injectivity radius. After we develop a deeper understanding of Hodge theory and minimal surfaces on noncompact hyperbolic manifolds, we see that the geometric invariants are the systole of $M$ and the intrinsic diameters of maximal cusp neighborhoods.
	
	\item Originally,  \eqref{bdi} holds in the setting of closed manifolds, where $H^1 \cong H_2$. Brock and Dunfield use the stability of least area incompressible surfaces in closed hyperbolic $3$-manifolds heavily, where one has uniform control on the principal curvatures on such surfaces and the area is bounded from above and below by some constant multiple of the Euler characteristic. For $H_2(M)$ in the noncompact case, there exist closed surfaces that escape to infinity through homotopy supported in the cusps, and the uniform control on curvature is no longer as clear. In addition, the Thurston norm $\|\cdot \|_{Th}$ is no longer a genuine norm on $H_2(M)$ by the following example. Let $T$ be a boundary-parallel torus. If $M$ has more than one cusp, then $[T]\neq 0$ in $H_2(M)$ but $\|T\|_{Th} =0$. In this case, one questions the validity and implication of such inequality. Even for a closed minimal surface $S$ with genus at least $2$ supported in the thick part, a priori, there is no obstruction for a portion of $S$ to enter the cusp arbitrarily deep. 
	
	\item On the other hand, if we look at $\phi \in H^1(M)$, $\| \phi \|_{L^2}$ may blow up. For $M$ closed, by classical Hodge theory, each $\phi \in H^1$ has a harmonic representative, naturally with finite $L^2$-norm. When $M$ is noncompact, there exists $\phi \in H^1(M)$ such that $\| \phi \|_{L^2}= \infty$: through a relatively straightforward computation in \Cref{L2normblowupsurlem}, we demonstrate that for an incompressible properly embedded noncompact surface like the thrice-punctured sphere, the dual $1$-form $\alpha$ has infinite $L^2$-norm. 

\end{enumerate}
\subsection{Main theorem}
We resolve all the issues mentioned above by imposing one condition on $H^1(M)$: 
\begin{center}
	$\psi \in H^1(M)$: $\psi$ is represented by an $L^2$ harmonic $1$-form $\alpha$.
\end{center}

Denote by $\lh$ the space of $L^2$ harmonic $1$-forms on $M$. In \cite{zucker1982} (see also (1.4) and comments before that in \cite{mpHodgeHyp}), Zucker showed that $\lh$ is isomorphic to the image of the natural inclusion of the compactly supported first cohomology $\text{Im}(H^1_0(M) \rightarrow H^1(M))$. Here, we provide a self-contained introduction to Hodge theory for noncompact hyperbolic manifolds in Section \ref{hodgeL^2}, based on \cite{mpHodgeHyp}. We then analyze the topology of the dual surfaces of $\psi$ in Section \ref{topologyL^2} and show that the Thurston norm restricted to $\lh$ is a genuine norm in \Cref{genuinenormlem}. Geometrically, we point out the beautiful coincidence that the surfaces dual to elements of $\lh$ are exactly those described recently in [\citenum{hwClosedMinCusped}, Corollary 1.2]: 
\begin{cor}
	Let $S$ be a closed orientable embedded surface in a cusped $3$-manifold $M^3$ which is not a $2$-sphere or a torus. If S is incompressible and non-separating, then $S$ is isotopic to an embedded least area minimal surface. 
\end{cor}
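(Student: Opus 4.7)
The plan is to realize $S$ as a least-area surface in its isotopy class by combining area minimization on a compact truncation of $M$ with a horotorus-barrier maximum principle. The hypotheses on $S$ (incompressible, not a sphere, not a torus, non-separating) imply the key topological fact that $S$ cannot be isotoped into any cusp neighborhood: any incompressible closed embedded surface in the cusp model $T^2\times [0,\infty)$ is either compressible or isotopic to a horotorus $T^2 \times \{*\}$, and in either case the resulting image would fail to be simultaneously non-separating and non-torus in $M$. Consequently, after truncating $M$ at horotori of sufficient depth to form a compact manifold $M_\tau := M \setminus \bigsqcup_i C_i$, where the $C_i$ are the excised horoball-quotient cusp neighborhoods, the isotopy class of $S$ in $M$ is represented by surfaces lying entirely in $M_\tau$, and the variational problem may be restricted to $M_\tau$.

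Inside $M_\tau$ I would minimize area in the isotopy class of $S$ via a Meeks--Simon--Yau type theorem, producing a smooth embedded least-area minimal surface $\Sigma_\tau$. Each horotorus component of $\partial M_\tau$ has constant mean curvature $2$ with mean-curvature vector pointing into the excised cusp, so $\partial M_\tau$ is strictly mean-concave from the $M_\tau$ side. Nonetheless, the geometric maximum principle rules out any tangential contact between a minimal surface $\Sigma\subset M_\tau$ and a horotorus boundary component $T$: at a hypothetical contact point $p$, with $\Sigma$ locally on the $M_\tau$ side of $T$, the Alexandrov comparison of mean curvatures with respect to the common unit normal $\nu$ pointing into the cusp gives $\vec H_\Sigma \cdot \nu = 0$ against $\vec H_T \cdot \nu = 2$, violating the required inequality $\vec H_{\textnormal{below}} \cdot \nu \geq \vec H_{\textnormal{above}} \cdot \nu$. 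Hence $\Sigma_\tau$ lies strictly in the interior of $M_\tau$ and yields the desired closed embedded minimal surface in $M$ isotopic to $S$.

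The principal obstacle is running Meeks--Simon--Yau in the mean-concave region $M_\tau$, since their theorem is classically stated for mean-convex boundary, so area-minimizing sequences have no a priori reason not to escape across $\partial M_\tau$. I would address this by perturbing the ambient metric in a collar of $\partial M_\tau$ so that the perturbed boundary becomes strictly mean-convex, running the classical MSY minimization in the perturbed metric, and then letting the perturbation vanish; the barrier computation above persists in the hyperbolic metric and guarantees that the limiting minimizer is disjoint from $\partial M_\tau$, so the limit is a genuine minimal surface in the original metric. Alternatively, one can exhaust $M$ by ever-deeper truncations $M_{\tau_n}$ with $\tau_n\to\infty$ and use the non-torus hypothesis, via the same barrier argument, to show that any minimizing sequence in the isotopy class has a subsequence converging in a fixed compact region of $M$, producing the least-area minimal surface directly.
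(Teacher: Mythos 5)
First, a remark on provenance: the paper does not prove this statement at all --- it is quoted verbatim from Huang--Wang (their Corollary 1.2, restated later as Theorem 3.1), so the real comparison is between your sketch and their argument.

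The fatal gap is your barrier step: the comparison inequality is backwards. At a tangency where $\Sigma$ lies locally on the $M_\tau$ side of a horotorus $T$ and $\nu$ is the common normal pointing into the cusp, writing both surfaces as graphs over the tangent plane and evaluating at the interior minimum of the difference gives the requirement $\vec H_{\mathrm{above}}\cdot\nu \geq \vec H_{\mathrm{below}}\cdot\nu$, i.e. $\vec H_{T}\cdot\nu \geq \vec H_{\Sigma}\cdot\nu$, which here reads $2 \geq 0$ and is satisfied; there is no contradiction. (Sanity check: for a minimal surface inside a round ball touching the boundary sphere from inside, the same inequality reads $-2/R \geq 0$ and does give a contradiction --- that is the mean-convex case.) As you yourself note, $\partial M_\tau$ is mean-concave from the $M_\tau$ side, and mean-concave boundaries are simply not barriers: closed minimal surfaces can, a priori, cross horotori and penetrate arbitrarily deep into the cusps. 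This is exactly why the Huang--Wang result is nontrivial. Both your metric-perturbation fix and your exhaustion fallback lean on the same false barrier computation to confine the limit, so neither rescues the argument; worse, a constrained minimizer in the mean-concave region $M_\tau$ may partially adhere to $\partial M_\tau$ and fail to be minimal in $M$. What actually controls the depth of penetration --- and what the paper itself summarizes in Section 3 --- is a quantitative cut-and-paste area comparison: using the Adams waist-size bound (waist $\geq 1$ on a maximal cusp), one shows that if a least-area representative rose above the explicit height $\tau=\log(3L_0)$, replacing the portion beyond a horotorus by pieces of that horotorus would strictly decrease area within the isotopy class. Your topological observation that $S$ cannot be isotoped into a cusp is correct but was never the obstruction; the existence step still requires Meeks--Simon--Yau/Freedman--Hass--Scott on an exhaustion combined with this area estimate (not a maximum principle) to keep the minimizing sequence from escaping into the cusps.
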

 This corollary provides us with enough regularity to obtain estimates similar to the ones for stable incompressible minimal surfaces in the closed hyperbolic case. In [\citenum{hwClosedMinCusped}, Theorem 1.1], Huang and Wang show that for each cusped $M$, there is an explicit height $\tau$ to truncate cusps off $M$ to obtain a compact manifold $M_{\tau}$ with boundary so that any least area closed incompressible non-separating surface stays in $\mt$ [\citenum{hwClosedMinCusped}, Theorem 5.9, Corollary 5.7] (see \Cref{subsecTruncateMtau}). Readers familiar with \cite{bdNorms} may propose the following generalization of \eqref{bdi} for a cusped manifold $M$: 
\[
 	\ds \frac{\pi}{\sqrt{\text{vol}(M)}} \| \cdot \|_{Th} \leq \| \cdot \|_{L^2} \leq \frac{c}{\sqrt{\inj(M_{\tau})}} \|\cdot \|_{Th} \text{ on } \lh(M),
\]
where $c$ is constant. However, we have abandoned this direction in view of the following observation. \Cref{surmtau} combined with arguments from \tn{[\citenum{bdNorms}, Section 4]} suffices to give a similar upper bound by $\max\{\frac{c}{\sqrt{\inj(M_{\tau})}}, \frac{c}{\inj(M_{\tau})}\}$ in the cusped case. However, the constant $c$ is cumbersome, and the boundary $\partial M_{\tau}$ gives rise to terms of a different order. This simplistic approach fails to take advantage of the thick-thin decomposition and the warped product structure of the cusp neighborhoods, and does not provide the more intrinsic upper bound by systole as in \Cref{mainCor}. 

Using properties of subharmonic functions and modified Bessel functions, we prove a version of the theorem independent of the truncation:
\begin{theorem}\label{maintheo}
For all orientable cusped $3$-manifolds $M$, we have 
\begin{equation}\label{mainequ}
	\ds \frac{\pi}{\sqrt{\vol(M)}} \| \cdot \|_{Th} \leq \| \cdot \|_{L^2} \leq  \max \Bigl\{ \frac{10\pi\sqrt{2}}{\sqrt{\sys(M)}}, 4.86\pi\sqrt{1+\frac{d^2}{2}} \Bigr\} \|\cdot \|_{Th} \text{ on } \lh,
\end{equation}
where $d$ is the largest intrinsic diameter of the boundary tori of the maximal cusp neighborhoods and $\sys(M)$ denotes the length of the shortest closed geodesic in $M$.
\end{theorem}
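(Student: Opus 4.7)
The plan is to prove the two inequalities of \Cref{mainequ} separately, using the minimal surface theory of Huang--Wang for the lower bound and a cusp decomposition combined with subharmonic and Bessel function estimates for the upper bound.

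For the lower inequality $\pi/\sqrt{\vol(M)}\,\|\psi\|_{Th}\leq \|\alpha\|_{L^2}$, I would adapt the strategy of Brock--Dunfield. Given $\psi\in\lh$ with harmonic representative $\alpha$, \Cref{genuinenormlem} ensures that $\psi$ admits a non-trivial Thurston-norm-minimizing dual surface $S$, and the Huang--Wang corollary quoted above lets us isotope $S$ to an embedded least-area closed incompressible minimal surface contained entirely in $M_\tau$. Stability of $S$ in an ambient of curvature $-1$ forces $K_S\leq -1$, so Gauss--Bonnet yields $\text{area}(S)\leq 2\pi|\chi(S)|=2\pi\|\psi\|_{Th}$. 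Pairing $\alpha$ with $S$, applying Cauchy--Schwarz, and sweeping out a tubular neighborhood of $S$ by parallel (cohomologous) surfaces---or equivalently using the coarea formula on the infinite cyclic cover where $\alpha$ becomes exact---bounds $\|\psi\|_{Th}$ in terms of $\|\alpha\|_{L^2(M)}$ with constant $\pi/\sqrt{\vol(M)}$ after optimization.

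For the upper inequality I would decompose $M=M_\tau\cup C$ into its truncation and the disjoint union of maximal cusp neighborhoods, and estimate $\|\alpha\|_{L^2}^2$ on each piece. On $M_\tau$ the injectivity radius is bounded below by $\sys(M)/2$, so the Brock--Dunfield pointwise estimate---controlling $|\alpha|(p)$ by the $L^2$-norm of $\alpha$ on a geodesic ball of radius $\sys(M)/2$ via the explicit series expansion of harmonic $1$-forms on $\Hy$---applies essentially verbatim and produces the $10\pi/\sqrt{\sys(M)}$ term. On each cusp neighborhood $C_i\cong T^2\times[0,\infty)$ I would Fourier-expand $\alpha|_{C_i}$ along the flat cross-sections: the zero mode is pinned by the cohomology class of $\psi$ and hence bounded by $\|\psi\|_{Th}$ up to a constant depending on the intrinsic torus diameter $d$, while the nonzero modes satisfy an ODE in the height coordinate whose bounded, $L^2$-integrable solutions are modified Bessel functions $K_\nu$. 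Sharp pointwise and $L^2$ estimates for $K_\nu$, together with subharmonicity of $|\alpha|^2$ propagating boundary data into the cusp, bound their total contribution, yielding the cuspidal constant $4.86\pi\sqrt{1+d^2/2}$.

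The hard part will be the cusp estimate. Extracting a constant that depends only on $d$ (and not on the truncation height $\tau$ or on other delicate geometric data of the cusp) requires sharp monotonicity and decay inequalities for $K_\nu$ and its derivatives, and a careful matching at $\partial M_\tau$ so that the boundary integrals generated when cutting $M$ into $M_\tau$ and $C$ do not affect the interior constants. A secondary subtlety is that the Huang--Wang containment of minimal representatives in $M_\tau$ is essential for the lower bound: without it a Thurston-minimizing surface could drift arbitrarily deep into a cusp and lose the area--Euler characteristic control supplied by Gauss--Bonnet, so the interaction between the two halves of the proof is more tightly coupled to the minimal surface theory of \cite{bdNorms} than in the closed case.
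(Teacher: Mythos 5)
Your lower-bound paragraph has the area inequality pointing the wrong way. From $K_S\leq -1$ and Gauss--Bonnet you get $\tn{area}(S)\leq 2\pi\chi_-(S)$ (this follows from minimality via the Gauss equation, not from stability), which bounds the area \emph{by} the Thurston norm; but to prove $\frac{\pi}{\sqrt{\vol(M)}}\|\alpha\|_{Th}\leq\|\alpha\|_{L^2}$ you must bound the Thurston norm from above, i.e.\ you need the \emph{lower} area bound $\tn{area}(S)\geq\pi\chi_-(S)$, which is the one that uses stability (test function $1$ in the second variation) together with Gauss--Bonnet, as in \Cref{areagenusprop}. The correct chain, which your coarea remark gestures at, is $\pi\|\alpha\|_{Th}\leq\|\alpha\|_{LA}\leq\|\alpha\|_{L^1}\leq\sqrt{\vol(M)}\,|\alpha|_{L^2}$ via \Cref{lathcor}, \Cref{LAL1} and Cauchy--Schwarz; note that the middle step is not automatic here, since the $L^1$-norm on a class in $\fma$ is an infimum over compactly supported representatives and one needs the approximating sequence $\phi_i$ from \Cref{l2harIsometricCompact}. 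This half is fixable.

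The upper bound is where the proposal genuinely breaks down. Decomposing $\|\alpha\|_{L^2}^2$ over $M_\tau$ and the cusps never brings in $\|\alpha\|_{Th}$: the Thurston norm enters only through the dual surface, via $\|\alpha\|_{L^2}^2=\int_S *\alpha\leq\|\alpha\|_{L^\infty}\,\tn{area}(S)\leq 2\pi\|\alpha\|_{L^\infty}\|\alpha\|_{Th}$, where the first identity itself requires proof on a non-compact manifold (Stokes plus the exponential decay of \Cref{decayharmonicform}; this is \Cref{homologicaldualitysur}). What remains is then an $L^\infty$-versus-$L^2$ estimate, not a piecewise $L^2$ estimate, and your piecewise version is circular: controlling $|\alpha|$ pointwise by $\|\alpha\|_{L^2}$ and integrating returns $\|\alpha\|_{L^2}^2\lesssim\|\alpha\|_{L^2}^2$. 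Your claim that the cusp zero mode is ``pinned by the cohomology class and hence bounded by $\|\psi\|_{Th}$'' has no mechanism behind it and is moot anyway: for $\alpha\in\lh$ the class restricts trivially to each cusp torus, the zero mode vanishes identically, and in the cusp $\alpha=df$ with $f=\sum c_i zK_1(\lambda_i z)\psi_i$ a pure nonzero-mode Bessel series --- that is precisely why the exponential decay holds. The claim that $\inj(M_\tau)\geq\sys(M)/2$ is also false: near $\partial M_\tau$ the injectivity radius is governed by the cusp cross-section, not the systole, which is exactly why the cusp contributes the separate term $4.86\pi\sqrt{1+d^2/2}$ in the max instead of being absorbed into $10\pi/\sqrt{\sys(M)}$. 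Finally, $|\alpha|^2$ is not subharmonic in negative curvature (Bochner gives $\Delta\tfrac12|\phi|^2=|\nabla^2 f|^2-2|\nabla f|^2$); the device of \Cref{subharmonicCuspMaxBoundary} is to add the correction $(f+c)^2$, making $h=\tfrac12|\phi|^2+(f+c)^2$ subharmonic, and the optimal choice of $c$ (using $\int_{\T}f=0$) together with the mean value theorem on the boundary torus is what produces the factor $\sqrt{1+d^2/2}$; the final comparison to $\|\phi\|_{L^2(M)}$ then uses the ball of radius $\ln\sqrt2$ embedded at the shrunk cusp boundary (waist size $\geq 1$) and \Cref{dfv(r)}. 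Your sketch names the right ingredients but is missing the argument that assembles them.
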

\begin{rem}
	Note that when $M$ is closed, its systole equals twice the injectivity radius. Thus the coefficient $\frac{10\pi\sqrt{2}}{\sqrt{\sys(M)}}$ is just $\frac{10\pi}{\sqrt{\inj(M)}}$ from \eqref{bdi}. Let $\{M_t\}_{t \geq \tau}$ be a compact exhaustion for $M$. When $\psi \in H^1(M;  \Z)$ is harmonic and has infinite $L^2$-norm, we can still bound the growth rate of $\|\psi\|_{L^2(M_t)}$ using its topological complexity, which counts the intersection of $\partial M$ and $S \in H_2(M, \partial M)$ dual to $\psi$. See \Cref{growthRateInfiniteL2Intersect}.
\end{rem}
Using Dehn fillings and Thurston-J{\o}rgensen theory, a natural corollary is
\begin{cor}\label{mainCor}
		Let $V_0$ be a fixed positive constant.	For all but finitely many cusped manifolds $M$ with volume less than $V_0$, we have 
	\begin{equation}\label{maincor}
		\| \cdot \|_{L^2} \leq \frac{10\pi\sqrt{2}}{\sqrt{\sys(M)}} \|\cdot \|_{Th} \text{ on } \lh(M).
	\end{equation}
\end{cor}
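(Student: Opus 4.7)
The plan is to combine Theorem~\ref{maintheo} with the Jørgensen--Thurston theory of hyperbolic Dehn filling. Theorem~\ref{maintheo} already bounds $\|\cdot\|_{L^2}/\|\cdot\|_{Th}$ by the maximum of $10\pi/\sqrt{\sys(M)}$ and $4.86\pi\sqrt{1+d^2/2}$, so the desired inequality~(\ref{maincor}) holds whenever the first term dominates. Hence it suffices to show that the set of cusped $M$ with $\vol(M)<V_0$ and $4.86\pi\sqrt{1+d(M)^2/2}>10\pi/\sqrt{\sys(M)}$ is finite; equivalently, that along every infinite family of such $M$, one has $d(M)$ uniformly bounded while $\sys(M)\to 0$.

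First I would invoke Jørgensen's finiteness theorem: there is a finite list $N_1,\dots,N_k$ of cusped hyperbolic $3$-manifolds of volume at most $V_0$ such that every cusped hyperbolic $3$-manifold $M$ with $\vol(M)<V_0$ is obtained by Dehn filling a (possibly empty) proper subset of the cusps of some $N_i$. It then suffices to fix a parent $N$ and show that all but finitely many of its cusped Dehn fillings satisfy~(\ref{maincor}). Take any sequence $(M_n)$ of distinct cusped Dehn fillings of $N$; after passing to a subsequence we may assume that the set of filled cusps is fixed and nonempty, and that some slope parameter tends to infinity (forced, since Dehn slopes at each cusp form a lattice, so only finitely many fillings lie in any bounded range). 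By Thurston's hyperbolic Dehn surgery theorem, $M_n$ converges geometrically to $N$, the core geodesics of the filled cusps have length tending to zero (so $\sys(M_n)\to 0$), and on arbitrarily large compact cores away from the filled cusps the approximations are bi-Lipschitz close to the identity on $N$. Consequently the maximal cusp neighborhoods of the unfilled cusps of $M_n$ converge in shape to those of $N$, so $d(M_n)\to d(N)<\infty$ and stays bounded. Thus $10\pi/\sqrt{\sys(M_n)}\to\infty$ and eventually dominates the bounded quantity $4.86\pi\sqrt{1+d(M_n)^2/2}$, so~(\ref{maincor}) holds for all but finitely many $M_n$. Taking the union over the finite list $\{N_1,\dots,N_k\}$ concludes the argument.

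The delicate point is the convergence $d(M_n)\to d(N)$. The maximal cusp neighborhood is defined globally by when a horoball expansion first self-intersects, so Dehn filling could in principle alter the intrinsic diameter of a boundary torus in subtle ways. The expected input is the strong form of geometric convergence of hyperbolic Dehn fillings, which yields bi-Lipschitz identifications on arbitrarily large compact cores of $N$, together with the fact that in $N$ the self-tangency locus of the maximal cusp lies in such a compact core. Combined with the continuity of the self-tangency radius and of the intrinsic torus metric under small bi-Lipschitz perturbation, this gives the needed convergence of $d$, and hence the corollary.
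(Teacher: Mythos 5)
Your overall skeleton is sound and, on the systole side, coincides with the paper's: both arguments reduce to showing that (i) the diameter term $4.86\pi\sqrt{1+d^2/2}$ in Theorem~\ref{maintheo} is uniformly bounded over all cusped $M$ with $\vol(M)<V_0$, and (ii) by J{\o}rgensen--Thurston there are only finitely many such $M$ with $\sys(M)\geq \epsilon_0$, so outside a finite set the systole term dominates. Where you diverge is step (i), and that is where your argument has a real gap. You bound $d$ by running geometric convergence of Dehn fillings and claiming $d(M_n)\to d(N)$ for the maximal cusps of the unfilled cusps. As you yourself note, this is the delicate point: the maximal cusp neighborhood is cut out by the first self-tangency of an expanding horoball, and under filling a new, earlier self-tangency could in principle appear in the part of the manifold that is being modified (near the Margulis tubes around the short core geodesics), which is exactly the region where the bi-Lipschitz control from the drilling/filling theorems does not apply. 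Your sketch ("the self-tangency locus lies in a compact core, plus continuity under bi-Lipschitz perturbation") gives upper semicontinuity of the maximal cusp in the limit but does not by itself exclude such new tangencies, so the convergence $d(M_n)\to d(N)$ is asserted rather than proved.

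The paper avoids this entirely with a one-line volume argument: since the waist size of a maximal cusp neighborhood is at least $1$, a boundary torus of intrinsic diameter $d$ has area at least a universal constant times $d$ (a flat torus with systole $\geq 1$ and diameter $d$ has area $\gtrsim d$), hence the cusp has volume $\gtrsim d$; so $\vol(M)<V_0$ forces $d\leq C(V_0)$ uniformly, with no parent manifolds, no Dehn surgery theorem, and no continuity of maximal cusps required for this step. If you want to salvage your route, either prove the non-appearance of new self-tangencies quantitatively, or better, use the remark following Theorem~\ref{maintheo}: the maximal cusp in Inequalities~(\ref{mainequ}) may be replaced by \emph{any} cusp neighborhood with waist size $\geq 1$, which removes the dependence on the maximality locus altogether and reduces your delicate point to the elementary area estimate above.
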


\subsection{Topology and the Thurston norm of harmonic forms}
In Section \ref{harformCompCohomo}, we analyze the topology of the space of $L^2$ harmonic $1$-forms on a cusped $3$-manifold $M$, using the vector space isomorphism $\lh \cong \fma$ from \cite{zucker1982, mpHodgeHyp}. We prove that this isomorphism is actually an isometry: 
\begin{prop}
	Let $M$ be a cusped $3$-manifold. The space of $L^2$ harmonic $1$-forms $(\lh(M), \|\cdot\|_{L^2})$ is isometric to $(\fma, \|\cdot\|_{L^2})$. 
\end{prop}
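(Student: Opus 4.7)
My plan is to interpret the $L^2$ norm on $\fma$ as
\[
\|\psi\|_{L^2} \;=\; \inf\bigl\{\,\|\eta\|_{L^2}\;:\;\eta\text{ a smooth closed }1\text{-form in } L^2 \text{ representing }\psi\,\bigr\}
\]
and show that this infimum is realized by the unique $L^2$ harmonic representative $\alpha$ of $\psi$ provided by the Zucker / Mazzeo--Phillips isomorphism $\lh(M)\cong \fma$. Granted this, the isomorphism $\alpha\mapsto[\alpha]$ is tautologically an isometry, since on the left-hand side $\|\alpha\|_{L^2}$ is the intrinsic $L^2$ norm of the harmonic form.

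The mechanism is the standard Hodge-theoretic Pythagorean identity, adapted to the cusped setting. Given $\psi\in\fma$ with harmonic representative $\alpha$ and any other smooth $L^2$ closed representative $\eta$, I would write $\eta = \alpha + d\beta$ for some function $\beta$ on $M$ with $d\beta\in L^2$. The essential step is the $L^2$ orthogonality
\[
\langle \alpha,\, d\beta\rangle_{L^2(M)} \;=\; 0,
\]
from which $\|\eta\|_{L^2}^2 = \|\alpha\|_{L^2}^2+\|d\beta\|_{L^2}^2 \geq \|\alpha\|_{L^2}^2$, with equality iff $\eta=\alpha$.

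To prove the orthogonality rigorously I would exhaust $M$ by compact truncated cores $M_T$ obtained by cutting each cusp at height $T$, and apply Stokes' theorem on $M_T$:
\[
\int_{M_T}\alpha\wedge *d\beta \;=\; \int_{\partial M_T}\beta\wedge *\alpha \;-\; \int_{M_T}\beta\wedge d{*\alpha}.
\]
The bulk term vanishes since $d{*\alpha}=0$ (as $\alpha$ is harmonic), so everything reduces to showing that the boundary flux $\int_{\partial M_T}\beta\wedge *\alpha$ tends to $0$ as $T\to\infty$. For this I would use the cusp decomposition of $L^2$ harmonic forms (Mazzeo--Phillips): on a cusp with coordinates $(r,\theta)$ and metric $dr^2 + e^{-2r}g_T$, Fourier-decompose $\alpha$ along the torus cross-section; the non-constant modes decay exponentially in $r$, while the $L^2$ hypothesis on $\alpha$ kills the ``growing'' piece of the zero-mode, leaving only a harmless decaying tail. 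Together with a suitable choice of the primitive $\beta$ (e.g.\ normalizing it to have controlled mean on each cusp cross-section), the flux integral goes to zero.

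The main obstacle is exactly this cusp-end analysis; the closed-manifold proof of the Pythagorean identity is a one-line integration by parts, but here the boundary contributions at infinity are genuine and require the asymptotic structure of $\alpha$ and a judicious choice of primitive to be controlled. Once that is in hand, the proposition follows at once.
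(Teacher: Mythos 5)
There is a genuine gap, and it originates in your opening move: you replace the paper's definition of $\|\cdot\|_{L^2}$ on $\fma$ with a different one. The paper defines
\[
\|\phi\|_{L^2}=\inf\bigl\{\,|\psi|_{L^2}\ \big|\ \psi\in H^1_0(M),\ \psi\ \text{cohomologous to}\ \phi\ \text{in}\ H^1(M)\,\bigr\},
\]
an infimum over \emph{compactly supported} closed representatives, whereas you take the infimum over all smooth closed $L^2$ representatives. Under your definition the harmonic form $\alpha$ is itself a competitor and the proposition collapses to the Pythagorean inequality; under the paper's definition $\alpha$ is \emph{not} a competitor (it is nowhere compactly supported), the infimum is never attained for $\alpha\neq 0$, and the substantive half of the proof is the one you omit: exhibiting compactly supported representatives $\alpha_i$ cohomologous to $\alpha$ with $|\alpha_i|_{L^2}\to|\alpha|_{L^2}$. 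A priori the infimum over the smaller class could be strictly larger, and ruling this out is exactly where the cusp analysis enters. The paper does it with the Mazzeo--Phillips retraction: in the compactified cusp coordinate $s=1/y$ write $\alpha=\gamma(s)+h(s)\,ds$, set $R\alpha(s)=\int_0^s h(t)\,dt$, note that $L^2$-harmonicity forces $\gamma(0)=0$ so that $\alpha=d(R\alpha)$ near $s=0$, and put $\alpha_i=\alpha-d(f_iR\alpha)$ for cutoffs $f_i$ supported in $[0,\epsilon/i)$ with $|df_i|\leq i/\epsilon$; the decay $h=\mathcal{O}(e^{-\lambda/s})$ then gives $|d(f_iR\alpha)|_{L^2}\to 0$.

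Your orthogonality computation itself is sound and supplies the complementary inequality $\|\phi\|_{L^2}\geq|\alpha|_{L^2}$, which the paper leaves implicit (it follows from the Kodaira decomposition $L^2\Omega^1=\mathcal{H}^1\oplus\overline{d\Omega^{0}_0}\oplus\overline{d^*\Omega^{2}_0}$): for $\psi$ compactly supported with $\psi-\alpha=d\beta$ one has $d\beta=-\alpha$ outside a compact set, so $\beta$ converges exponentially to a constant on each cusp end and the flux $\int_{\partial M_T}\beta\,{*\alpha}$ is controlled by $\int_{T_z}{*\alpha}=\mathcal{O}(e^{-\lambda z})$. So keep that argument, but you must add the truncation step above --- or, equivalently, prove that your infimum over all $L^2$ closed representatives coincides with the infimum over compactly supported ones --- before $\alpha\mapsto[\alpha]$ is an isometry for the norm the proposition actually refers to.
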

In \Cref{topologyL^2}, we characterize the surfaces dual to $\lh$ as the incompressible and non-peripheral closed surfaces in $M$, which are the analogue of the surfaces representing the second homology of a closed, hyperbolic manifold. The Thurston norm on $H^1(M) \cong H_2(M, \partial M)$ is a genuine norm. A natural corollary (\Cref{genuinenormlem}) on the Thurston norm restricted to $\lh$ is the following: 
\begin{lem}
Let $M$ be a cusped $3$-manifold. Then the Thurston norm $\| \cdot \|_{Th}$ on $\mathcal{H}^1 \cong \text{Im}(H_2(M) \rightarrow H_2(M, \partial M))$ is a genuine norm. 
\end{lem}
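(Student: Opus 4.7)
The plan is to show that any nonzero class $\alpha \in \text{Im}(H_2(M) \to H_2(M, \partial M))$ has $\|\alpha\|_{Th} > 0$. This would reduce to the classical fact that $\|\cdot\|_{Th}$ is already a genuine norm on all of $H_2(\overline{M}, \partial\overline{M}; \mathbb{R})$, where $\overline{M}$ is the compact manifold with torus boundary obtained by truncating the cusps of $M$: $\overline{M}$ is irreducible, atoroidal, anannular, and has incompressible torus boundary, which are exactly the hypotheses in Thurston's original paper on the norm that guarantee non-degeneracy. Restriction to a linear subspace then preserves this property, giving the lemma.

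Concretely, I would suppose $\|\alpha\|_{Th} = 0$ and choose a norm-minimizing properly embedded oriented representative $S = \bigsqcup_i S_i$ of $\alpha$ in $\overline{M}$. Each component $S_i$ must then satisfy $\chi(S_i) \geq 0$, so is a $2$-sphere, disk, torus, or annulus. Applying Thurston's standard compression and boundary-compression moves, which never increase the norm, I may further assume every $S_i$ is incompressible and boundary-incompressible.

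The next step is to dispose of each component type using the hyperbolicity of $\overline{M}$. A sphere bounds a ball by irreducibility, so is null-homologous. A disk has inessential boundary in a torus (since the boundary tori are incompressible), so it cobounds a ball with a disk in $\partial\overline{M}$ and is null-homologous. An incompressible torus is peripheral by atoroidality, hence isotopic into $\partial\overline{M}$ and becomes zero in $H_2(\overline{M}, \partial\overline{M})$. An incompressible and boundary-incompressible annulus is peripheral by anannularity, and is likewise null-homologous. Summing, $\alpha = \sum_i [S_i] = 0$, contradicting $\alpha \neq 0$.

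The step demanding the most attention is the torus case, since the authors have explicitly highlighted that a peripheral torus $T$ can be nonzero in $H_2(M)$ while having vanishing Thurston norm. This is precisely the obstruction that dissolves after pushing forward to $H_2(M, \partial M)$, because a peripheral torus is boundary-parallel and hence trivial in relative homology. That is exactly why restricting to the image $\text{Im}(H_2(M) \to H_2(M, \partial M))$ is the right move: it cuts down the problematic kernel and promotes the semi-norm on $H_2(M)$ to an honest norm on the subspace dual to $\mathcal{H}^1$.
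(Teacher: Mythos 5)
Your proof is correct and follows essentially the same route as the paper: the paper invokes Thurston's non-degeneracy criterion together with the facts that a cusped hyperbolic $3$-manifold has no essential spheres and that all incompressible tori are boundary-parallel (hence die in $H_2(M,\partial M)$), which is exactly the case analysis you carry out in more detail. Your additional observation that the statement is just the restriction of the genuine norm on $H_2(\overline{M},\partial\overline{M})$ to a subspace matches the framing the paper gives in its introduction.
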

In general, the Thurston norm is only a semi-norm, which makes it impossible to bound the $L^2$-norm from above by the Thurston norm. That the underlying metric is hyperbolic is important for \eqref{bdi} and \Cref{maintheo}. For example, on the flat $3$-torus obtained from gluing of the unit cube $\{0 \leq x,y,z \leq 1\}$ in $\R^3$, the $L^2$-norm of $[dz]$ is $1$ while its Thurston norm is $0$. In elliptic manifolds, there are no incompressible surfaces by considering the fundamental groups, and there are no $L^2$ harmonic $1$-forms by Bochner type arguments or Bonnet-Myers theorem. One interesting question that Chris Leininger has asked is on which of the eight geometries we have inequalities similar to \eqref{bdi}. 

\subsection{Geometry and the least area norm of harmonic forms}
After analyzing the topology of the surfaces dual to $\lh$, we show that their geometry is also regular. In particular, their least area representatives do not go deep inside the cusp. The existence of least area representatives of surfaces dual to $\lh$ has recently been established in \cite{hwClosedMinCusped, chmrMinNoncompactExist, errata-chrSurBundle}. We show how to truncate the manifold $M$ canonically to obtain a compact subdomain $\mt$ in \eqref{mtau}. All the least area surfaces dual to classes in $\lh$ are contained in $\mt$ (\Cref{surmtau}). Their areas are bounded from below and above by their genera (\Cref{areagenusprop}), as in \cite{hass1995acy, chmrMinNoncompactExist}. Thus we can define the least area norm and show that it is controlled by the Thurston norm:
\begin{cor}
Let	$M$ be a cusped $3$-manifold. Then 
	\begin{equation}
		\pi \| \cdot  \|_{Th} \leq \| \cdot \|_{LA} \leq 2\pi \|  \cdot\|_{Th} \text{ on } \lh.
	\end{equation} 
\end{cor}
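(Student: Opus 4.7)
The plan is to relate the least area norm to the Thurston norm component-by-component, via the Gauss equation and the stability inequality. By \Cref{surmtau} together with the existence results of \cite{hwClosedMinCusped, chmrMinNoncompactExist}, every class $\psi \in \lh$ is represented by a least area minimal surface $\Sigma = \sqcup_i \Sigma_i$ contained in $M_{\tau}$. By the topological analysis of \Cref{topologyL^2}, each component $\Sigma_i$ is closed, incompressible, non-peripheral (hence of genus $\geq 2$ since cusped hyperbolic $3$-manifolds are atoroidal), and stable as a minimal surface because it minimizes area in its isotopy class.

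For the upper bound, I would start from the Gauss equation. Because $\Sigma_i$ is minimal in $\Hy$, its principal curvatures satisfy $\lambda_1 = -\lambda_2 = \lambda$, so $K_{\Sigma_i} = -1 - \lambda^2$. Integrating and applying Gauss--Bonnet gives
\[
\mathrm{Area}(\Sigma_i) + \int_{\Sigma_i} \lambda^2 \, dA \;=\; -2\pi\chi(\Sigma_i) \;=\; 2\pi|\chi(\Sigma_i)|.
\]
Dropping the nonnegative integral yields $\mathrm{Area}(\Sigma_i) \leq 2\pi|\chi(\Sigma_i)|$. To conclude the upper bound for the whole class, take a Thurston-norm minimizing representative of $\psi$ (with $|\chi| = \|\psi\|_{Th}$) and pass to an isotopic least area minimal representative with the same Euler characteristic via \Cref{surmtau}; summing the componentwise bounds gives $\|\psi\|_{LA} \leq 2\pi\|\psi\|_{Th}$.

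For the lower bound, I would plug the test function $f \equiv 1$ (permissible since $\Sigma_i$ is closed) into the stability inequality
\[
\int_{\Sigma_i} |\nabla f|^2 \, dA \;\geq\; \int_{\Sigma_i} \bigl(|A|^2 + \mathrm{Ric}_{\Hy}(\nu,\nu)\bigr) f^2 \, dA.
\]
Since $|A|^2 = 2\lambda^2$ and $\mathrm{Ric}_{\Hy}(\nu,\nu) = -2$, this collapses to $\int_{\Sigma_i} \lambda^2 \, dA \leq \mathrm{Area}(\Sigma_i)$. Substituting into the Gauss--Bonnet identity above gives $2\,\mathrm{Area}(\Sigma_i) \geq 2\pi|\chi(\Sigma_i)|$, i.e.\ $\mathrm{Area}(\Sigma_i) \geq \pi|\chi(\Sigma_i)|$. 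Summing over components and using $\sum_i |\chi(\Sigma_i)| \geq \|\psi\|_{Th}$ (the surface $\Sigma$ represents $\psi$, so its total Euler characteristic is an upper bound for the infimum defining the Thurston norm) yields $\|\psi\|_{LA} \geq \pi\|\psi\|_{Th}$.

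The main obstacle I expect is not analytic but organizational: one must be careful to distinguish the least area minimizer (used for the lower bound) from a Thurston-norm minimizing representative (used to set up the upper bound) and to verify that these can be matched via the existence results so that the Gauss--Bonnet-derived bound $\mathrm{Area} \leq 2\pi|\chi|$ genuinely controls $\|\psi\|_{LA}$ in terms of $\|\psi\|_{Th}$. This requires confirming, using \Cref{surmtau} and \Cref{areagenusprop}, that an incompressible, non-peripheral Thurston-norm minimizing representative is isotopic in $M_{\tau}$ to a minimal surface of the same topological type; the rest of the argument is then a direct assembly of the componentwise inequalities.
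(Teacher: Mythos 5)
Your argument is correct and follows essentially the same route as the paper: the paper deduces the corollary directly from \Cref{areagenusprop} (the bound $2\pi(g-1)\leq \mathrm{area}(F)\leq 4\pi(g-1)$ for least area representatives, whose proof it outsources to [\citenum{chmrMinNoncompactExist}, Remark 3] and [\citenum{hass1995acy}, Lemma 6]), combined with \Cref{lacm} and \Cref{surmtau} exactly as you use them. The only difference is that you unpack the proof of that proposition (Gauss equation plus the stability inequality with test function $f\equiv 1$) rather than citing it, and your computation there is the standard one and is correct.
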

We then define the $L^1$-norm on $\lh$ and show that it is equal to the least area norm in \Cref{LAL1}. This follows from the general principle that \pc duality is an isometry for H\"older-conjugate norms (\Cref{pcdualityIso}).
\subsection{Sharpness and the connection between harmonic forms, foliations, and flows}
It is a natural question whether  \eqref{bdi} and  (\ref{mainequ}) can be realized. Brock and Dunfield show that the left-hand inequality of  \eqref{bdi} is qualitatively sharp, by constructing a sequence of manifolds $M_n$ and $\phi_n \in H^1(M_n)$ such that $\frac{\| \phi_n \|_{Th}}{\| \phi_n \|_{L^2}\sqrt{\vol(M_n)}}$ is a constant. The manifolds $M_n$ cover a fixed manifold $M$. The Thurston norm scales linearly in terms of the degree of the cover by the deep work of Gabai \cite{gdFolitopo} while the $L^2$-norm scales by the square root of the degree.  Quantitatively, Anil and Dunfield \cite{anilnathan} find examples of $M$ on which the left-hand inequality of \eqref{bdi} is $99.6\%$ sharp. Thus it is surprising that we prove that the left-hand of \eqref{bdi} is never realized. This holds similarly in the cusped case. 
\begin{theorem}
Let $M$ be a closed or cusped $3$-manifold. Then 
\begin{center}
	$\ds \frac{\pi}{\sqrt{\vol(M)}} \| \alpha \|_{Th} < \| \alpha \|_{L^2}$,
\end{center} 
where $\alpha \in H^1(M)$ for closed $M$ and $\alpha \in \lh(M)$ for cusped $M$. 
\end{theorem}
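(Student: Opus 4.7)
My plan is to trace through the chain of inequalities underlying the theorem and identify a step that is already strict. In both the closed setting of Brock--Dunfield and the cusped setting of \Cref{maintheo}, the main inequality factors as
\[
\frac{\pi}{\sqrt{\vol(M)}}\,\|\alpha\|_{Th} \;\le\; \frac{\|\alpha\|_{L^1}}{\sqrt{\vol(M)}} \;\le\; \|\alpha\|_{L^2},
\]
where the right step is Cauchy--Schwarz and the left step uses the bound $\pi\|\cdot\|_{Th}\le\|\cdot\|_{LA}=\|\cdot\|_{L^1}$ established earlier in the paper, which rests on Gauss--Bonnet applied to a stable least-area minimal representative $\Sigma_{\min}$ of the dual class (a closed surface sitting in $\mt$ in the cusped case by \Cref{surmtau}). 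I aim to show the left step is already strict whenever $\alpha\neq 0$.

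Writing the principal curvatures of $\Sigma_{\min}$ as $\pm\lambda$, Gauss--Bonnet together with $K_{\Sigma_{\min}}=-1-\lambda^2$ gives
\[
\operatorname{area}(\Sigma_{\min})+\int_{\Sigma_{\min}}\lambda^2\,dA=2\pi|\chi(\Sigma_{\min})|=2\pi\|\alpha\|_{Th},
\]
while stability of $\Sigma_{\min}$ tested against $V\equiv 1$ (valid as $\Sigma_{\min}$ is closed) yields $\int_{\Sigma_{\min}}|II|^2\,dA = 2\int\lambda^2\,dA \le 2\operatorname{area}(\Sigma_{\min})$. Combining, $2\operatorname{area}(\Sigma_{\min})\ge 2\pi\|\alpha\|_{Th}$, so $\operatorname{area}(\Sigma_{\min})\ge\pi\|\alpha\|_{Th}$, with equality iff $\lambda\equiv 1$ on $\Sigma_{\min}$.

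Equality would then force $\Sigma_{\min}$ to have constant principal curvatures $\pm 1$, and each connected component of its preimage in $\Hy$ would be a complete embedded isoparametric hypersurface of $\Hy$ with the constant eigenvalue pair $\{+1,-1\}$. The classical classification of isoparametric hypersurfaces in $\Hy$---totally geodesic $\Hy^2$, horospheres, distance spheres, equidistants of $\Hy^2$, and tubes around geodesics, with eigenvalue pairs $\{0,0\}$, $\{1,1\}$, $\{\coth r,\coth r\}$, $\{\tanh d,\tanh d\}$, and $\{\coth r,\tanh r\}$ respectively---shows no complete hypersurface of $\Hy$ has the pair $\{+1,-1\}$. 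This contradiction rules out equality, so $\pi\|\alpha\|_{Th}<\|\alpha\|_{L^1}$ strictly, and the main inequality is strict for every $\alpha\neq 0$.

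The main obstacle is the sharpness of the classification step: one must invoke (going back to Cartan) that the list above really does exhaust the isoparametric hypersurfaces of $\Hy$, so that the pair $\{+1,-1\}$ is genuinely impossible for complete hypersurfaces. Alternatively, one can give a direct argument: an intrinsic $K\equiv -2$ minimal surface in $\Hy$ with principal curvatures $\pm 1$ everywhere would have the parallel surface family of constant mean curvature zero, and a direct computation with the Jacobi equation in $\Hy$ rules this out. Once this is granted, the Gauss--Bonnet plus stability argument works uniformly in both the closed and the cusped case.
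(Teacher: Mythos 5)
Your route is genuinely different from the paper's, and for integral classes it is essentially sound, but it has one real gap: it does not prove the theorem for irrational classes. You inject strictness at the first link of the chain, $\pi\|\alpha\|_{Th}\le\|\alpha\|_{LA}$, and the whole argument lives on an honest least-area surface dual to $\alpha$. Such a surface exists only when $\alpha$ is an integral (hence, after scaling, rational) class; for irrational $\alpha\in H^1(M;\R)$ the Thurston and least-area norms are defined by continuous extension, and strict inequality on a dense set of rational rays does not pass to the closure. The paper avoids this by making the \emph{last} link strict instead: equality in $|\alpha|_{L^1}\le\|\alpha\|_{L^2}\sqrt{\vol(M)}$ is equality in Cauchy--Schwarz, which forces the harmonic representative to have constant pointwise norm --- a condition on the form itself, available for every real class (\Cref{sharpnessleftprop}). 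It then shows that no nontrivial harmonic $1$-form on a closed hyperbolic $3$-manifold has constant length (\Cref{noFormConstLength}, via the $4g-4$ zeros of the holomorphic quadratic differential whose real part is the second fundamental form of the associated minimal surface, with alternatives via Zeghib's theorem on geodesic vector fields and the non-existence of geometric minimal foliations), while in the cusped case it simply invokes the exponential decay of $\alpha$ in the cusps (\Cref{decayharmonicform}), which is incompatible with constant length. To repair your argument you would either have to restrict the statement to rational classes or move the strict link to the Cauchy--Schwarz step as the paper does.

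Two smaller points. First, your ``equality iff $\lambda\equiv1$'' needs a line of justification: equality gives only $\int_{\Sigma_{\min}}(\lambda^2-1)\,dA=0$, and to upgrade this to a pointwise statement you must note that the stability form $Q$ is positive semidefinite with $Q(1,1)=0$, so the constant function lies in the kernel of the Jacobi operator, i.e. $|\sigma|^2+\mathrm{Ric}(\nu,\nu)\equiv0$, whence $\lambda^2\equiv1$. Second, your endgame is in substance the same fact the paper quotes from Mazet--Rosenberg and Gulliver--Jost and reproves in \Cref{noFormConstLength} --- a closed stable minimal surface in a hyperbolic $3$-manifold cannot have $|\sigma|^2\equiv2$ --- but your derivation via Cartan's identity for isoparametric hypersurfaces (the pair $\{+1,-1\}$ violates $\sum_{j\ne i}m_j\frac{c+\lambda_i\lambda_j}{\lambda_i-\lambda_j}=0$ with $c=-1$) is correct and in fact purely local, so completeness of the lifts is not even needed; that is a nice alternative to the quadratic-differential argument.
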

 When $M$ is closed, one obstruction for the equality to be realized is that a harmonic $1$-form on a hyperbolic manifold cannot have constant length. In particular, this follows from the fact that the second fundamental form of a minimal surface in a space form is the real part of a holomorphic quadratic differential, with $4g-4$ zeros, and cannot be everywhere nonzero. In a private communication, F. Pallete pointed out an obstruction coming from the area of a stable minimal surface in hyperbolic $3$-manifolds, which constitutes another point of view. For details, see \Cref{noFormConstLength}. \par  
 We provide two additional points of view in \Cref{sharpFunctionalSection}. One is from dynamic theory \cite{noflowgeodesichyper}, which proves the non-existence of smooth vector fields whose flow lines are all geodesic. The other concerns the non-existence of geometric foliations \cite{wwNonexigeometricflow}. The noncompact case is easier due to the exponential decay of a harmonic $1$-form near infinity (\Cref{decayharmonicform}). We see a nice interaction between harmonic forms, minimal surfaces, and foliations in Section \ref{sharpFunctionalSection}. We end by defining two functionals on $\Psi\coloneqq$ \{Closed or cusped orientable hyperbolic $3$-manifolds\} in \Cref{functional}:
\begin{center}
	$\ds D_i(M)\coloneqq \inf_{\alpha \in \lh}\frac{\pi \|\alpha\|_{Th}}{\sqrt{\textnormal{vol}(M)}\|\alpha \|_{L^2}}$,
\end{center}
and 
\begin{center}
	$\ds	D_s(M)\coloneqq \sup_{\alpha \in \lh}\frac{\pi \|\alpha\|_{Th}}{\sqrt{\textnormal{vol}(M)}\|\alpha \|_{L^2}}$.
\end{center}
These functionals provide a unifying point of view for several results regarding sharpness and the examples constructed in \cite{bdNorms}. They also enable us to ask questions that do not come up in the original inequalities and may be interesting invariants to study. 

\section{\texorpdfstring{$L^2$}{L2} Harmonic Forms and Compactly Supported Cohomology}\label{harformCompCohomo}

\subsection{Basic definitions}
Let $M$ be a complete smooth Riemannian manifold of dimension $n$. We denote the space of smooth differential $k$-forms on $M$ by $\Omega^k(M)$ and the subspace of forms with compact support by $\Omega^k_0(M)$. When the underlying manifold $M$ is clear, we denote the two spaces by $\ok$ and $\okc$. The exterior derivative is denoted by $d$. 

\begin{defi}
	The $k$-th de Rham's cohomology group of $M$ is defined by: 
	\begin{center}
	$\ds H^k_{dR}(M)=\frac{\{\phi \in \Omega^k(M): d \phi=0\} }{d\Omega^{k-1}(M)}$.
 \end{center}
\end{defi}
By de Rham's theorem, we may identify $H^k_{dR}(M)$ with $H^k(M)$, the singular cohomology, where we always use real coefficients unless specified otherwise. 

\begin{defi}
	The $k$-th compactly supported cohomology of $M$ is defined by: 
	\begin{center}
		$\ds H^k_{0}(M)=\frac{\{\phi \in \Omega^k_0(M): d \phi=0\} }{d\Omega^{k-1}_0(M)}$.
	\end{center}
\end{defi}
Next, we define $L^2$ forms. Denote by $dx_1$, $dx_2$, ..., $dx_n$ a local orthonormal basis for the $1$-forms. A $k$-form $\phi$ can be expressed locally as 
\begin{center}
	$\phi = \sum\limits_{I=\{i_1<i_2<...<i_k\}} \phi_I dx_{i_1}\wedge dx_{i_2}\wedge ... \wedge dx_{i_k}$,
\end{center}
where $\phi_I \in L^2_{loc}$ are locally square-integrable functions. The space $L^2\Omega^k(M)$ consists of $\phi \in \Omega^k$ such that
\begin{center}
	$\ds \|\phi \|^2_{L^2} \coloneqq  \int_M \phi\wedge *\phi =\int_M |\phi(x)|^2 d \vol_g(x)<\infty$.
\end{center}
 The space $L^2 \Omega^k(M)$ is a Hilbert space with inner product 
\begin{center}
	$\ds \langle \phi, \psi \rangle  =\int_M \phi\wedge *\psi =\int_M (\phi(x), \psi(x))_g d\vol_g(x)$.
\end{center}
The formal adjoint of $d$ on $\okc$ is:
\begin{center}
	$d^*:\okhc \rightarrow \okc$
\end{center}
which is defined by the following condition: 
\begin{center}
$\forall \phi \in \okhc$ and $\psi \in \okc,$ \par
$\langle d^*\phi, \psi \rangle = \langle \phi, d\psi \rangle$.
\end{center}
Note that $d^*$ can also be defined on $k$-forms using the Hodge star operator by 
\begin{center}
$d^* \phi =(-1)^{n(k-1)+1} *d* \phi$.
\end{center}
The Hodge Laplace operator 
\begin{equation}
	\Delta =dd^*+d^*d: \okc \rightarrow \okc
\end{equation}
is a second-order symmetric elliptic operator. When the metric is complete, by \cite{gaffney1955hilbert}, the $L^2$ closure of the operator $\Delta$: 
\begin{equation}
	\Delta: L^2 \ok \rightarrow L^2 \ok
\end{equation}
is essentially self-adjoint. Its domain is $\{\omega \in L^2\ok: d\omega, d^*\omega, dd^*\omega, d^*d\omega \in L^2 \}$.
\begin{defi}\label{harmonicClosedCoclosed}
	The $L^2$ harmonic $k$-forms are defined by 
\begin{center}
		$ \mathcal{H}^k(M)=\{\phi \in L^2\Omega^{k}(M): \Delta \phi=0 \}$,
\end{center}
where $\Delta=d d^*+d^*d$. The above definition is equivalent to 
\begin{center}
	$ \mathcal{H}^k(M)=\{\phi \in L^2\Omega^{k}(M): d\phi= d^*\phi=0 \}$,
\end{center}
when the metric on $M$ is complete, by \tn{\cite{gaffney1955hilbert}}. 
\end{defi}
Using the Hodge-de Rham decomposition of $L^2\Omega^k(M)$ (also called the Kodaira decomposition) [\citenum{differentiable1984MR760450}],
\begin{center}
	$L^2\Omega^k(M)=\mathcal{H}^k(M) \oplus \overline{d\Omega^{k-1}_0(M)} \oplus \overline{d^*\Omega^{k+1}_0(M)}$.
\end{center}

\begin{defi}\label{rabc}
	Relative and absolute boundary conditions for forms: Consider a manifold $M$ with boundary $\partial M$, with $\iota: \partial M \rightarrow M$ the natural inclusion map. For a $k$-form $\alpha$, the relative boundary condition is $\iota^* \alpha =0$, while the absolute boundary condition is $\iota_{\nu} \alpha =0$, where $\iota_{\nu}$ is the interior multiplication with the inward-pointing unit normal vector field $\nu$ on $\partial M$. 
\end{defi}
The relative boundary condition resembles Dirichlet boundary condition $f|_{\partial M}=0$ for a function $f$, while the absolute boundary condition resembles Neumann boundary condition $\frac{\partial f}{\partial \nu }=0$, where $\nu$ is the inward-pointing unit normal vector field on $\partial M$. \par 
When $M$ is the interior of a compact manifold $\overline{M}$ with compact boundary $\partial \overline{M}$, i.e., $M=\overline{M}-\partial \overline{M}$, the $k$th cohomology with compact support $H^k_0(M)$ is isomorphic to the relative cohomology group of $\overline{M}$ [\citenum{mpHodgeHyp}, (3.1), (3.2)]: 
\begin{equation}\label{relativecoho}
	H^k_0(M)=H^k(\overline{M}, \partial \overline{M}) \coloneqq \frac{\{\phi \in \ok(\overline{M}): d\phi=0, \iota^*\phi=0\}}{\{d\psi : \psi \in \okl(\overline{M}), d\psi=0, \iota^*\psi=0\}}
\end{equation}
where $\iota: \partial \overline{M}\rightarrow \overline{M}$ is the inclusion map. 

\subsection{Hodge theory on cusped \texorpdfstring{$3$}{3}-manifolds}\label{hodgeL^2}\par
In the remainder of this section, $M$ denotes a closed or cusped hyperbolic $3$-manifold unless noted otherwise. When $M$ is cusped, topologically, it is homeomorphic to the interior of an irreducible compact $3$-manifold $\overline{M}$ with toroidal boundary. The (topological) boundary of $M$ refers to $\partial \overline{M}$. This coincides with the compactification of cusped manifolds presented in \cite{mpHodgeHyp}. This subsection covers Hodge theory on cusped $3$-manifolds, in particular for the space of $L^2$ harmonic $1$-forms $\lh$. Most of the ideas are adapted from \cite{mpHodgeHyp}, which builds a Hodge theory for  geometrically finite hyperbolic $n$-manifolds with possibly infinite volume, generalizing the results on cusped manifolds from \cite{zucker1982}, and obtains asymptotic estimates for $L^2$ harmonic $k$-forms near infinity. The particular case that we are interested in is the following \cite{mpHodgeHyp, zucker1982}:  
\begin{lem}\label{fmaIma}
 Let $M$ be a cusped $3$-manifold. Then $\lh$ is isomorphic to $\text{Im}(H^1_0(M) \rightarrow H^1(M))$ as vector spaces.
\end{lem}
Note that in general, there is no relationship between harmonic forms and compactly supported cohomology or its image in $H^1$. For example, consider the hyperbolic disc $D=\mathbb{H}^2$. Its space of harmonic $1$-forms $\lh(D)$ is of infinite dimension, while $H^1(D)$ and $H^1_0(D)$ are trivial. As another example, the product space $S^1 \times \R$ has trivial $\lh$ but nontrivial $H^1_0$ and $H^1$. These two examples are taken from a nice survey [\citenum{jdHarmonicFormComp}, p. 292], which also discusses other subtle issues related to harmonic forms. \par 

From \cite{mpHodgeHyp}, the isomorphism in \Cref{fmaIma} can be interpreted in the following way. Take $\alpha \in \lh$; by doing a perturbation supported near infinity, we obtain $\phi \in H^1_0$ such that $\alpha$ is cohomologous to $\phi$ in $H^1$. The specific construction is in \Cref{l2harIsometricCompact} where we promote the isomorphism above to an isometry. From basic algebraic topology, $H^1_0(M)$ is isomorphic to $H_2(M)$ [\citenum{AH}, Theorem $3.35$]. There is a subtle but important difference between $\text{Im}(H^1_0(M) \rightarrow H^1(M))$ and $H^1_0(M)$, which is crucial for the main theorem. Consider a boundary-parallel torus $T$ in a cusp. It is clear that $T$ defines a class $[T] \in H_2(M)$, and a $1$-form $\alpha$ dual to $[T]$ can be chosen to be compactly supported around $T$. If $M$ has more than one cusps (boundary components), then $[T]$ is nontrivial in $H_2(M)$. The $1$-form $\alpha$ has Thurston norm zero, and it is not obvious that its harmonic norm is zero. Moreover, as $T$ exits to infinity, its area approaches zero. Fortunately, $[T]=0$ in $H_2(M, \partial M)$ regardless of the number of boundary components of $M$, and hence  $[T]=0$ in $\text{Im}(H_2(M) \rightarrow H_2(M, \partial M))$, which is equivalent to $[\alpha]=0$ in $\text{Im}(H^1_0(M) \rightarrow H^1(M))$. We can also see this from geometry by computing the $L^2$-norm. Lift $T$ to the upper half-space model $\Hy=\{(x, y, z)|z>0\}$ so that a component of its cover is $z=c$. Let $\alpha=f(z)/z \, dz$ be a $1$-form dual to $T$ where $f(z)$ is a bump function supported in $[c-1, c+1]$ with $\int f/z \, dz=1$. The $L^2$-norm of $\alpha$ is
\begin{center}
	$\ds \| \alpha \|^2_{L^2} = \int_M \alpha \wedge * \alpha = \int_{c-1}^{c+1} f^2 \frac{dx\wedge dy\wedge dz}{z^3} = \mathcal{O} \left(\log \frac{c+1}{c-1} \right) \rightarrow 0$ as $c \rightarrow \infty$.
\end{center}
The third equality above is a simple estimate following from $\int f/z \, dz=1$. Thus if $[\alpha] \neq 0$ in $\mathcal{H}^1(M)$, one of the components of the closed surface dual to $\alpha$ must have genus at least $2$, as there are no essential spheres or essential tori. \par 
%\textcolor{blue}{editting starts} \par 
We now use the isomorphism in \Cref{fmaIma} to obtain a Fourier series expansion of $\alpha$ inside a cusp neighborhood $\C$. By composing with an isometry if necessary, we can assume that a component $\widetilde{\C}$ of the cover of $\C$ in $\Hy$ is given by $\tilde{\C}=\{(x, y, z)|z \geq \sqrt{2}\}$. The boundary torus $\T = \partial \C$ is lifted to $z=\sqrt{2}$ and inherits an intrinsic flat metric. The fact that $\alpha \in \lh$ is cohomologous in $H^1$ to the image of the inclusion map of $\phi \in H^1_0$ implies $\alpha = df$ on $\C$. Moreover, we take advantage of the warped product structure of the cusp. Let $K_j(\cdot)$ denote the modified Bessel functions \cite{asMathFunctionsFormula} and $\psi_i$ the $i$-th eigenfunction of $\Delta$ on the torus $\T$ with nonzero eigenvalue $-\lambda_i^2$ ($\lambda_0=0$). \par  
\begin{lem}\label{L2harmonic=df}
	Let $\alpha$ be an $L^2$ harmonic $1$-form. Then restricted to a cusp neighborhood $\C$, there exists a harmonic function $f$ such that 
	\begin{equation}\label{harmonicDf}
		\alpha = df,
	\end{equation}
 whose Fourier expansion is:
	\begin{equation}\label{phidfBessel}
		f(x,y,z)=\sum_{i=1}c_i zK_1(\lambda_i z)\psi_i(x,y)
	\end{equation}
for some $c_i \in \R$. 
\end{lem}
\begin{proof}
	Since $\alpha \in \lh$ is cohomologous to $\phi \in \fma$, we have 
	\begin{equation}
		\alpha - \phi =df \tn{ for some function } f.
	\end{equation}
	Using \Cref{topologyL2harmonic} whose proof does not depend on this lemma, $\phi$ is dual to a surface whose support is disjoint from the cusp neighborhood $\C$, so that $\phi=0$ on $\C$. This implies that $\alpha  =df$ on $\C$. 
	Since $\alpha$ is harmonic, a simple computation shows that $f$ is also harmonic on $\C$, using the completeness of the metric of $M$ and the equivalence in \Cref{harmonicClosedCoclosed}. To obtain the Fourier expansion, we use the fact that $\T$ is flat, oriented, and $2$-dimensional. In particular, the Hodge decomposition of $\T$ implies $\Omega^1(\T)= d\Omega^0 \oplus (d^* *\Omega^0) \oplus\lh(\T)$. Thus the vector space of eigen $1$-forms on $\T$ has a basis consisting of $\{d\psi_{i}, \: *d \psi_{i}\}$. 
	Let $\alpha=\gamma(x,y,z,dx,dy)+\beta(x,y,z)dz=df(x,y,z)$.  Using [\citenum{mpHodgeHyp}, (4.7)], 
	\begin{equation}
		\ds \alpha = \sum_{i=0}^{\infty} \gamma_i(z) d\psi_i(x,y)+  \sum_{i=0}^{\infty} \beta_i(z) \psi_i(x,y) dz.
	\end{equation}
	Combined with the identity $\frac{d}{dz}zK_1(z)= -zK_0(z)$ ([\citenum{asMathFunctionsFormula}, 9.6.28]) and [\citenum{mpHodgeHyp}, (4.10)], we have \eqref{phidfBessel}. 
\end{proof}
Although the function $f$ is harmonic inside each cusp neighborhood and possesses similar series expansions, it is not globally harmonic by the following argument. Since it decays to $0$ in each cusp neighborhood, using the thick-thin decomposition, $f$ is bounded. A bounded harmonic function on a complete finite-volume manifold is constant by \cite{yau1976some,yau1976someErrata}. By the properties of modified Bessel functions, both the $1$-form $\gamma$ and the function $f$ decay exponentially. This is [\citenum{mpHodgeHyp}, Theorem 4.12], which we reformulate as follows:  
\begin{theorem}\label{decayharmonicform}
If $\alpha \in \lh$, then on a neighborhood of a rank-$2$ cusp, $\alpha = \gamma+\beta dz$ where $\gamma$ is a $1$-form and $\beta$ is a function, both depending parametrically on $z$. Asymptotically, they satisfy 
\begin{center}
$|\gamma|=\mathcal{O}(e^{-\lambda z})$ \tn{ and } $\beta=\mathcal{O}(e^{-\lambda z})$,
\end{center}
for some $\lambda>0$. 
\end{theorem}
 See also \cite{jdHarmonicFormComp}, which discusses the exponential decay of the $L^2$ forms in warped product manifolds. For readers familiar with number theory, we note that this is consistent with, for example, [\citenum{elstrodt2013groups}, Theorem 3.1]. \par 
 The isomorphism $\lh \cong \text{Im}(H^1_0(M) \rightarrow H^1(M))$ is actually an isometry. Equip $\fma$ with a natural $L^2$-norm by the following definition: 
\begin{equation}
	\|\phi\|_{L^2} \coloneqq \inf\{\|\psi\|_{L^2} \tn{ for } \psi \in H^1_0, \textnormal{ cohomologous to } \phi \textnormal{ in } H^1\}.
\end{equation}
\begin{prop}\label{l2harIsometricCompact}
	Let $M$ be a cusped $3$-manifold. The space of $L^2$ harmonic $1$-forms $(\lh(M), \|\cdot\|_{L^2})$ is isometric to $(\fma, \|\cdot\|_{L^2})$. 
\end{prop}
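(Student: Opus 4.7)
The plan is to promote the vector-space isomorphism $\lh \to \fma$ of Lemma~\ref{fmaIma}, sending $\alpha$ to its de Rham class $[\alpha]$, to an isometry. Fixing $\alpha \in \lh$ with $\phi := [\alpha]$, it suffices to show $|\alpha|_{L^2} = \|\phi\|_{L^2}$, which I would establish by proving the two inequalities separately.

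For $|\alpha|_{L^2} \leq \|\phi\|_{L^2}$, I would take any compactly supported closed $\psi$ with $[\psi] = \phi$ in $H^1(M)$. By de Rham's theorem $\psi - \alpha = df$ for some smooth $f$, and the expansion $|\psi|_{L^2}^2 = |\alpha|_{L^2}^2 + 2\langle\alpha, df\rangle + |df|_{L^2}^2$ reduces matters to showing $\langle\alpha, df\rangle = 0$. Since $M$ is complete, $\alpha$ is co-closed by Gaffney, so $d(f\,{*}\alpha) = df \wedge {*}\alpha$. Applying Stokes on an exhaustion $\{K_n\}$ obtained by truncating each cusp at height $n$ reduces the cross term to $\lim_n \int_{\partial K_n} f\,{*}\alpha$. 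I expect this boundary-term argument to be the main obstacle: it requires combining the Mazzeo--Phillips exponential decay $|{*}\alpha| = O(e^{-\lambda z})$ from Theorem~\ref{decayharmonicform} with a uniform bound $|f| \leq C$ near infinity, obtained by integrating $df = \psi - \alpha$ along radial directions in each cusp (using that $\psi$ is compactly supported and $\alpha$ decays), together with the $O(1/n^2)$ cross-sectional area of the truncation tori; the product of these three estimates forces the boundary integral to zero. Taking the infimum over $\psi$ then gives the inequality.

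For the reverse $\|\phi\|_{L^2} \leq |\alpha|_{L^2}$, the plan is to approximate $\alpha$ in $L^2$ by compactly supported closed representatives of $\phi$. Since $\phi \in \fma$, there exist a compactly supported $\psi_0$ and smooth $f_0$ with $\alpha = \psi_0 - df_0$; in particular $\alpha = -df_0$ in each cusp, providing a bounded primitive $h := -f_0$ of $\alpha$ there. Taking a smooth cutoff $\chi_R$ supported in the region $\{z \geq R\}$ of each cusp and equal to $1$ for $\{z \geq R+1\}$, I define $\psi_R := \alpha - d(\chi_R h)$, which is closed, cohomologous to $\alpha$ in $H^1(M)$, and vanishes where $\chi_R = 1$ (since $\alpha = dh$ there), hence is compactly supported. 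A routine cutoff estimate using the exponential decay of $\alpha$ and boundedness of $h$ against the $z^{-3}$ cusp volume element gives $|\psi_R - \alpha|_{L^2} \to 0$ as $R \to \infty$, so $|\psi_R|_{L^2} \to |\alpha|_{L^2}$ and the reverse inequality follows. The two estimates combined yield the asserted isometry.
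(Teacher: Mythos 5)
Your proposal is correct, and its second half is essentially the paper's entire written proof: the paper also produces compactly supported representatives $\alpha_i=\alpha-d(f_iR\alpha)$ by cutting off a primitive of $\alpha$ in each cusp and shows $|\alpha-\alpha_i|_{L^2}\to 0$ using the exponential decay of \Cref{decayharmonicform}; the only cosmetic difference is that the paper obtains the primitive from the Mazzeo--Phillips retraction operator $R\alpha(s)=\int_0^s h(t)\,dt$ in the compactified coordinate $s=1/y$, whereas you extract it from the global function $f_0$ furnished by the definition of $\fma$. Where you genuinely go beyond the paper is your first inequality $|\alpha|_{L^2}\le\|\phi\|_{L^2}$: the paper's proof only establishes that the infimum defining $\|\phi\|_{L^2}$ is at most $|\alpha|_{L^2}$, and leaves the reverse (that no compactly supported representative can beat the harmonic one) implicit, presumably deferring to the Kodaira decomposition $L^2\Omega^1=\mathcal{H}^1\oplus\overline{d\Omega^0_0}\oplus\overline{d^*\Omega^2_0}$ in which $df=\psi-\alpha$ lies in the summand orthogonal to $\mathcal{H}^1$. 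Your direct Stokes argument on an exhaustion makes this orthogonality explicit, and the three ingredients you identify for killing the boundary term (exponential decay of $|{*}\alpha|$, boundedness of $f$ from integrating $df=\psi-\alpha$ radially, and the $O(1/n^2)$ area of the truncation tori) do suffice --- indeed the area decay alone already beats a bounded integrand, so the estimate has room to spare. The net effect is that your write-up is slightly longer but self-contained, whereas the paper's leans on the abstract decomposition for the minimality half; both are valid.
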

\begin{proof}
	The proof follows from the proof of [\citenum{mpHodgeHyp}, Theorem 3.13], where they prove that the two vector spaces are isomorphic. Fix a cusp neighborhood $N = [t, \infty) \times \mathbb{T}^2$, with coordinate $(z, x)$. The first step is to compactify the cusp at infinity and introduce $s=1/z$ as a new coordinate. Then $N=[0, \epsilon] \times \mathbb{T}^2$ for some small $\epsilon >0$. Now $M$ is treated as a compact manifold with boundary, with $s=0$ corresponding to $\partial N$. Near $\partial N$, a harmonic $1$-form $\alpha$ can be written as 
	\begin{equation}
		\alpha = \gamma(s)+h(s)ds,
	\end{equation}
	where $h \in C^{\infty}(N)$. Here $\gamma, h$ are viewed as forms and functions depending parametrically on $s$. By [\citenum{mpHodgeHyp}, (3.3)], we define the retraction operator, 
	\begin{equation}\label{retrationOperator}
		\ds R\alpha(s)=\int_0^s h(t)dt,
	\end{equation} 
  so that near $s=0$, 
\begin{equation}
	\alpha=dR\alpha +\gamma(0)
\end{equation}
 for $\alpha$ closed. Take a cutoff function $f(s)$ equal to $1$ near $s=0$, supported in $N$. Extending by $0$,  $f(s)R\alpha$ is globally defined. With $i:\partial N \rightarrow N$ the inclusion,  $i^*(f(s)R\alpha)=0$ and $\alpha-d(fR\alpha)$ vanishes near $\partial N$. We apply the above construction in each of the cusp components. The map 
\begin{equation}
F: \lh \rightarrow \fma, F(\alpha)=\alpha-d(hR\alpha)
\end{equation}
is an isomorphism as proven in [\citenum{mpHodgeHyp}, Theorem 3.13]. We will use functions $f_i$ with increasingly small support to construct a sequence $\alpha_i$  of smooth forms with compact support such that 
\begin{equation}
	\|\alpha -\alpha_i \|_{L^2(M)} \rightarrow 0.
\end{equation}
By computation, near $s=0$
\begin{center}
$\ds \|\alpha\|_{L^2(N)}^2=\int_0^{\epsilon} |\gamma(s)|^2\frac{1}{s} +|h(s)|^2 s^3 ds$.
\end{center}
Since $\alpha \in L^2$, we have $\gamma(s)=\gamma(0)=0$ near $s=0$. 
Define $N_i \coloneqq [0, \frac{\epsilon}{i}) \times \mathbb{T}^2$, and let $f_i$ is a cutoff function equal to $1$ near $s=0$ with $\tn{supp}(f_i)\subset N_i$ and $ |df_i|\leq \frac{i}{\epsilon}$. Define 
\[
	\alpha_i =\alpha - d(f_i R\alpha).
\]
Then we have
\begin{align*}
	\|\alpha -\alpha_i \|_{L^2} = \|d(f_i R\alpha)\|_{L^2}^2 &=\int_0^{\epsilon/i}s^3 \left|\partial_s f_i(s)\right|^2 \left| \int_0^s h(t)dt \right| ^2+ |f_i(s)|^2 |h(s)|^2 ds\\
	&\leq \int_0^{\epsilon/i}s^3 \cdot \frac{i^2}{\epsilon^2} \left| \int_0^s h(t)dt \right| ^2+|h(s)|^2 ds\\
	&\leq \frac{\epsilon^3}{i^3}\frac{i^2}{\epsilon^2}	C\rightarrow 0\\,
\end{align*}
as $i \rightarrow \infty$, where we use $h \sim \mathcal{O}(e^{-\lambda/s})$ from \Cref{decayharmonicform}. 
\end{proof}

\subsection{Topology of \texorpdfstring{$L^2$}{L2} harmonic \texorpdfstring{$1$}{1}-forms and the Thurston norm}\label{topologyL^2}
 By \eqref{relativecoho}, $H^1_0(M) \cong H^1(\overline{M}, \partial \overline{M}) \cong H_2(M)$. It is a classical fact that every nontrivial $[S] \in H_2(M)$ for both closed and cusped $M$ is represented by a (possibly disconnected) closed smoothly embedded incompressible surface. This follows from \pc duality and the regular value theorem \cite{normThurston}. We will henceforward assume every class in $H_2$ is represented by such a closed smoothly embedded incompressible surface. 
\begin{defi}
	A class $[S] \in H_2(M)$ is peripheral if it is homologous to a union of boundary components. Otherwise, it is called non-peripheral. 
\end{defi}
 We use the absolute value of the Euler characteristic to measure the topological complexity of classes whose representatives are surfaces with genus $\geq 2$. For a connected surface $S$, $\chi_-(S) \coloneqq \max\{-\chi(S),0\}$, where $\chi$ is the Euler characteristic. Extend this to disconnected surfaces via $\chi_-(S \sqcup S') =\chi_-(S) + \chi_-(S')$. For a compact irreducible $3$-manifold $M$, the Thurston norm \cite{normThurston} of $\alpha \in H^1(M; \mathbb{Z}) \cong H_2(M, \partial M; \mathbb{Z})$ is defined by 
\begin{center}
	$\| \alpha \|_{Th}= \min\{\chi_- (S) \suchthat S\text{ is a properly embedded surface dual to } \alpha \}$.
\end{center}
The Thurston norm extends to $H^1(M; \mathbb{Q})$ by making it linear on rays through the origin, and then extends by continuity to $H^1(M; \R)$. When $M$ is closed and hyperbolic, it is non-degenerate and a genuine norm. It is only a semi-norm on $H^1_0(M) \cong H_2(M)$ for $M$ with two or more cusps. However, the story is different for $\lh \cong \text{Im}(H_2(M) \rightarrow H_2(M, \partial M))$. We first give a classification of $S \in \Ima$:
\begin{enumerate}
	\item $S$ Peripheral. There exists a proper map $u: (M, \partial M) \rightarrow (I, \partial I)$, where $I$ is a closed interval. If we denote by $[dx]$ a generator for $H^1_0(I)$, then the $1$-form $\alpha$ dual to $S$ satisfies $[\alpha] = [u^*(dx)]$. Moreover, $S$ is homologous in $H_2$ to some boundary-parallel tori and hence represents the trivial class in $H_2(M, \partial M)$. Thus a nontrivial $\alpha \in \mathcal{H}^1$ cannot be dual to a closed peripheral surface. 
		
\begin{figure}[ht]
\centering
\begin{subfigure}{.5\textwidth}
\centering
\includegraphics[width=5cm]{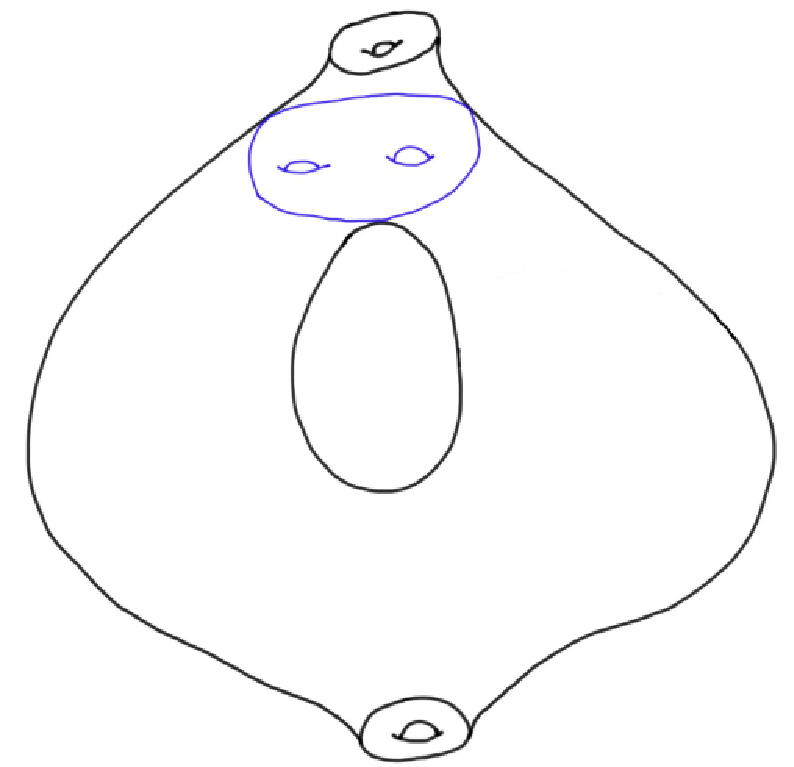}
\caption{Peripheral, trivial and $\|\cdot \|_{L^2}=0$}			
\end{subfigure}%
\begin{subfigure}{.5\textwidth}
\centering
\includegraphics[width=5cm]{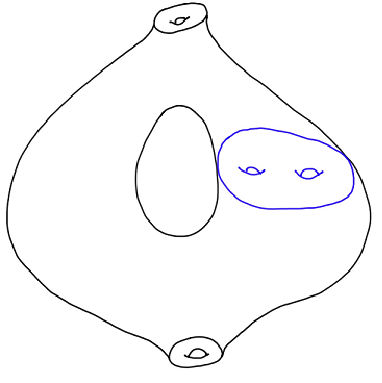}
\caption{Non-peripheral, nontrivial and $\|\cdot \|_{L^2} \neq 0$}
\label{fig:sub2}
\end{subfigure}
\caption{Classification of $\Ima$}
\end{figure}

	\item \label{nonperipheral} $S$ Non-peripheral. By $H_2(M, \partial M) \cong [M: S^1]$, $S$ is the fiber of a regular value of a smooth map $u: M \rightarrow S^1 = \R /\Z$. Take $u$ to be harmonic or the energy-minimizing representative in its homotopy class. The nontrivial harmonic $1$-form $\alpha = u^*(d\theta)$ is dual to $S$, where $[d \theta]$ is a standard generator for $H^1(S^1)$.
\end{enumerate}
We have obtained 
\begin{lem}\label{topologyL2harmonic}
	A nonzero $L^2$ harmonic $1$-form is dual to a non-peripheral surface.
\end{lem}
 Conversely, for a noncompact incompressible surface $S$ with finite topology whose ends are properly embedded, like a thrice punctured sphere embedded inside a cusped $3$-manifolds \cite{adams1985thrice}, the $L^2$-norm of its dual $1$-form $\alpha$ must be infinite. This is well-known to the experts, but we provide a short proof for completeness. 
\begin{lem}\label{L2normblowupsurlem}
	If $[S] \in H_2(M, \partial M) \backslash \Ima$, then a harmonic $1$-form $\alpha$ dual to $S$ does not have finite $L^2$-norm. 
\end{lem}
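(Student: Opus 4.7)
The plan is to find a contradiction in some cusp by combining a period computation with the exponential decay from \Cref{decayharmonicform}. The key topological input is that the hypothesis $[S] \notin \textnormal{Im}(H_2(M) \to H_2(M, \partial M))$ forces $S$ to have nonzero boundary in $\partial M$, which will provide a loop transverse to $S$ sitting arbitrarily deep in a cusp.

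First I would apply the long exact sequence of the pair $(M, \partial M)$: the connecting map sends $[S]$ to a nonzero class in $H_1(\partial M)$, so some boundary torus $T$ of a cusp $C$ carries an essential component $\gamma$ of $\partial S$. Choose a simple closed curve $\eta \subset T$ with algebraic intersection $\eta \cdot \gamma = 1$, parametrize $C$ by upper-half-space coordinates so $T$ sits at some fixed height, and let $\eta_z$ denote the translate of $\eta$ onto the horospherical torus $T_z$ at height $z$. Each $\eta_z$ is homologous to $\eta$ in $M$ via the obvious vertical annulus, and since $\alpha$ is closed and Poincar\'e--Lefschetz dual to $[S]$,
\[
\int_{\eta_z} \alpha \;=\; [\eta_z] \cdot [S] \;=\; \eta \cdot \gamma \;=\; 1,
\]
independent of $z$.

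Next I would suppose for contradiction that $\alpha$ has finite $L^2$-norm, so $\alpha \in \lh$. Applying \Cref{decayharmonicform} on $C$, write $\alpha = \phi + \beta\, dz$ with $|\phi|, |\beta| = O(e^{-\lambda z})$ for some $\lambda > 0$. Since $\eta_z$ lies inside the horospherical torus $T_z$, only the horizontal part $\phi$ contributes to the line integral, and a Euclidean translate on the horosphere at height $z$ has hyperbolic length scaled by $1/z$. Hence
\[
1 \;=\; \left|\int_{\eta_z} \alpha \right| \;\leq\; \textnormal{length}(\eta_z)\cdot \sup_{T_z} |\phi| \;=\; O\!\left(\frac{e^{-\lambda z}}{z}\right) \;\xrightarrow[z \to \infty]{}\; 0,
\]
a contradiction, so $\alpha$ cannot have finite $L^2$-norm.

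The main obstacle is the duality bookkeeping in the first step: identifying $\int_{\eta_z} \alpha$ with the algebraic intersection $\eta_z \cdot S$ (by the de Rham interpretation of Poincar\'e--Lefschetz duality, using that $\alpha$ is closed and represents the Lefschetz dual of $[S]$) and verifying that $\eta_z$ is homologous to $\eta$ in $M$ for every $z$. Once these topological identifications are pinned down, the exponential decay estimate together with the horospherical length scaling does all the analytic work.
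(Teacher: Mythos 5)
Your proof is correct, but it takes a genuinely different route from the paper's. The paper's argument is a one-line soft deduction: if $\alpha$ had finite $L^2$-norm it would lie in $\lh$, and then the identification $\lh \cong \fma \cong \Ima$ (\Cref{l2harIsometricCompact}, via Zucker and Mazzeo--Phillips) forces $[S]$ into the image of $H_2(M) \to H_2(M,\partial M)$, contradicting the hypothesis. You instead give a concrete period-versus-decay argument: exactness of the pair sequence produces a boundary torus on which $\partial S$ is essential, a transverse loop $\eta_z$ pushed deep into the cusp has constant nonzero period against $\alpha$ (by the adjunction between the intersection pairing on $H_1(\partial M)$ and the pairing $H_1(M)\times H_2(M,\partial M)\to\Z$), and the exponential decay of \Cref{decayharmonicform} together with the $1/z$ scaling of horospherical length forces that period to $0$. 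Your approach buys an explicit mechanism for the failure of square-integrability, localized to a specific cusp, and uses only the asymptotic expansion rather than the full cohomological identification of $\lh$; it is in the same quantitative spirit as the growth-rate computation ($\|\alpha\|_{L^2}\gtrsim -\log s$) that the paper carries out immediately after the lemma. The paper's proof is shorter but leans on heavier machinery. Two small points to tidy: $\eta\cdot\gamma$ should only be asserted to be a nonzero integer (it equals $1$ only when the relevant component of $\partial[S]$ is primitive), and the identity $[\eta_z]\cdot[S]=\eta\cdot\gamma$ deserves the one-sentence justification you allude to (push $\eta$ to a horotorus past which the ends of $S$ are vertical annuli); neither affects the conclusion.
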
 
\begin{proof}
If the $1$-form $\alpha$ dual to $S$ has finite $L^2$-norm, by \Cref{l2harIsometricCompact} and $\fma \cong \Ima$, $S$ must be homologous to the image of a class $[S'] \in H_2(M)$, a contradiction. 
\end{proof}
In fact, if we let $\{M_t\}$ denote a certain compact exhaustion of $M$, then we can derive a lower bound on $\| \alpha \|_{L^2(M_t)}$ from the number of algebraic intersections of $[S]$ and $[\partial M]$. Let $S$ be a surface dual to $\alpha$ whose number of intersections with $\partial M$ realizes $\# [S] \cap [\partial M]$. Let $\Hy= \{(x,y,z)|z>0\}$.  Assume that an end of $S$ in a truncated cusp $\C$ is lifted to a disk asymptotic to $z=\infty$ by composing with isometries of $\Hy$ if necessary. For simplicity, assume the base of $\C$ is a unit square. Parameterize $\C$ by $\{( x, y, z )\in \mathbb{H}^3 \suchthat (x,y)\in [0,1]\times [0,1], z \geq c \}$. Let $\C_t$ denote points in $\C$ whose $z$-coordinate is $\leq t$. The ends of $S$ are homotopic to totally geodesic, infinite annuli inside the cusps (see, for example, \cite{chrSurBundle}). Assume for now $S$ only has one end in $\C$.  After a change of coordinate, we also suppose  that this end is parameterized by $\{ (1/2, y, z)\in \C \}$. The several assumptions made above do not change the order of growth. Then the (asymptotic) lower bound on $\|\alpha\|_{L^2(\C_t)}^2$ is $\bo(\log t)$.  On each horizontal torus $T_z$, $\| \alpha \|^2$ integrates to approximately $z^2$ along a Euclidean geodesic $\{ (x, 1/2, z): x\in [0,1]\}$ intersecting the end exactly once. Integrating in the $z$-direction, we have
	\begin{center}
		$ \displaystyle \| \alpha \|_{L^2(\C_t)}^2  \gtrsim \int_c^{t} \int_0^1 \int_0^1 z^2 \frac{1}{z^3} dxdydz \sim \log t$.
	\end{center}
 If $S$ has $k$ algebraic intersections with $\partial \C$, since $S$ is properly embedded, the $k$ ends must be parallel, in which case $\| \alpha \|_{L^2(\C_t)}^2  \gtrsim k \log t$. If $M$ has only one cusp $\C$, then we define $M_t$ to be the union of $\C_t$ with the thick part. If $M$ has several cusps, we define similarly $M_t$ to be the union of the thick part with an exhaustion of all the cusps relative to the boundary of an (embedded) maximal cusp neighborhood, where $t$ corresponds to $z$-coordinate in $\Hy$. Thus, we have proven the following. 
\begin{prop}\label{growthRateInfiniteL2Intersect}
	Let ${M_t}$ be the above compact exhaustion of $M$. If $\alpha \in H^1(M; \Z)$ is harmonic and dual to $[S] \in H_2(M, \partial M) \backslash \Ima$, then 
	\[
	\displaystyle \| \alpha \|_{L^2(M_t)} \gtrsim \bo(\sqrt{\log t}).
	\]
\end{prop}
This is consistent with [\citenum{mpHodgeHyp}, (4.9)], whose proof is based on harmonic analysis. 
A criterion for the Thurston norm to be a genuine norm is the following \cite{normThurston}: For $H_2(M; \Z)$ (or $H_2(M,\partial M; \Z)$), if every embedded surface representing a nonzero element has negative Euler characteristic, then $\| \cdot \|_{Th}$ is a norm. When $M$ is cusped, all incompressible tori are boundary-parallel, and there is no incompressible surface with positive Euler characteristic. Thus the following holds.
\begin{prop}\label{genuinenormlem}
	When $M$ is a cusped $3$-manifold, $\| \cdot \|_{Th}$ on $\mathcal{H}^1 \cong \text{Im}(H_2(M) \rightarrow H_2(M, \partial M))$ is a genuine norm. 
\end{prop}
In addition to the Thurston norm, we need the $L^1$- and $L^{\infty}$-norms.  For a general $L^p$ theory, see \cite{scott1995Lpform}. The $L^{\infty}$-norm of an $L^2$ harmonic $1$-form $\alpha$ is defined by
\begin{equation}
	\ds \|\alpha\|_{L^{\infty}}=\max_{p\in M}|\alpha_p|.
\end{equation}
 Since $\alpha$ decays exponentially near infinity and $M$ is finite-volume, its $L^{\infty}$-norm is well-defined.  We also define the following $L^1$-norm on $\fma$:
\begin{equation}\label{L1norm}
	\|\psi\|_{L^1} =\inf \Bigl\{\int_M |\phi| \tn{ for } \phi\in \Omega^1_0(M) \tn{ representing } \psi \in \fma \Bigr\}.
\end{equation}
When $M$ is closed, the definition of the $L^1$-norm is just: 
\begin{equation}
	\|\psi\|_{L^1} =\inf \Bigl\{\int_M |\phi| \tn{ for } \phi\in \Omega^1(M) \tn{ representing } \psi \in H^1(M)\Bigr\}.
\end{equation}
The $L^1$-norm behaves very differently from the $L^2$-norm on the cohomology space, as it tends to be realized by non-smooth forms. A sequence of forms $\phi_i$ with compact support converging to $\phi$ realizing the $L^1$-norm of $[\phi]$ is similar to a sequence of bumps functions converging to a Dirac functional. Later we will use the Cauchy-Schwarz inequality to bound the $L^1$-norm of $\alpha \in \lh$ by its $L^2$-norm. To bound the $L^2$-norm of $\alpha$, we convert it into some integral of $\alpha$ on a surface $S$ dual to $\alpha$. When the manifold $M$ is closed, it is clear by \pc duality that integrating against a closed integral $1$-form is equal to integrating against its dual surface. Specifically, fix a surface $S$ dual to $\phi\in H^1(M; \Z)$ and let $\alpha$ be the harmonic representative of $\phi$. Then we have $\int_M \beta \wedge \alpha =\int_S \beta$ for every closed $2$-form $\beta$. When $M$ is closed, if $\alpha$ is harmonic, it is closed and coclosed. When $M$ is noncompact, a similar conclusion on $\alpha$ holds if the metric is complete. Let $\alpha \in \lh$ correspond to $\phi \in H^1_0(M, \Z)$, dual to $[S] \in \Ima$. By \pc duality for compactly supported cohomology [\citenum{btDifferentialAlgTopo}, I.5], we have again integral equality $\int_M \beta \wedge \phi =\int_S \beta$ for every closed $2$-form $\beta$. From this, we deduce 
\begin{prop}[\pc duality for cusped manifolds]
	Let $M$ be a cusped $3$-manifold. Let $\alpha \in \lh$ correspond to $\phi \in H^1_0(M, \Z)$, dual to a surface $S$. Then we have 
	\begin{equation}\label{homologicaldualitysur}
		\ds \|\alpha\|^2_{L^2}= \int_S *\alpha.
	\end{equation}
\end{prop} 
\begin{proof}
 Since $\phi$ has compact support, $\int_M  *\alpha \wedge \phi =\int_S *\alpha$.  It then suffices to show $\int_M *\alpha \wedge (\alpha -\phi) =0$. Since $[\phi-\alpha]=0$ in $H^1$, $\phi-\alpha = df$ for $f \in C^{\infty}(M)$. If we can show
 \begin{equation}\label{f*alphaFiniteInt}
 	  \int_M f *\alpha <\infty,
 \end{equation}
 then
	\begin{center}
		$\ds \int_M *\alpha \wedge (\alpha -\phi)=\int_M d(*\alpha \wedge f)=0$,
	\end{center}
	by [\citenum{yau1976some}, Lemma] which generalizes Stokes' theorem to noncompact manifolds. Apply the Cauchy-Schwarz inequality to \eqref{f*alphaFiniteInt}. Since $\alpha \in L^2$, the proof is finished if $f \in L^2$. As $\phi-\alpha = df$ and $\phi$ has compact support,  $df =\alpha$ outside a large ball $B$. Thus 
	\begin{equation}
		\ds \|f\|^2_{L^2(M)} \sim \|f\|^2_{L^2(M-B)} \sim \int_c^{\infty}\frac{f^2}{z^3}dz.
	\end{equation}
	Since $M$ has finite volume and $df =\alpha$ decays exponentially by \Cref{decayharmonicform}, $\|f\|_{L^2(M)}$ is finite.
\end{proof}

\section{Minimal Surfaces and Least Area Norm}
In this section, we discuss minimal surfaces and their properties in closed and cusped $3$-manifolds. A closed $2$-sided surface is minimal if its mean curvature is $0$. Let $\nu$ denote a unit normal vector field on a surface $S$ with a trivial normal bundle. A normal vector field $X$ may be written as $f\nu$, where $f$ is some smooth function. The stability operator $L$ is defined as 
\begin{equation}\label{stop}
	Lf=\Delta_S f+|\sigma|^2 f+Ric_M(\nu,\nu)f,
\end{equation}
where $Ric_M$ is the Ricci tensor of the ambient manifold $M$, $\sigma$ is the second fundamental form on $S$, and $\Delta_S$ is the intrinsic Laplacian operator of $S$. A minimal surface $S$ is stable if for all compactly supported variations $F$ with  fixed boundary, 
\begin{center}
	$\ds \left. \frac{d^2}{dt^2}\right|_{t=0}Area(F(S,t))=-\int_S\langle F_t,LF_t\rangle dA\geq 0$.
\end{center}
This is equivalent to the stability operator being negative semidefinite. A least area surface in a homotopy class is necessarily stable, but not vice versa. The area of a least area surface $F$ with genus $g \geq 2$ in a closed or cusped hyperbolic manifold $M$ satisfies the following inequalities ([\citenum{hass1995acy}, Lemma 6], [\citenum{mrMinHypLA}, Section 8.1] and [\citenum{chmrMinNoncompactExist}, Remark 3]):
\begin{equation}\label{surface/area/genus}
	2\pi (g-1) \leq area(F) \leq 4\pi (g-1);
\end{equation}
this was originally observed in an unpublished work of Uhlenbeck \cite{ukleastAreahyp}. The left-hand inequality in \eqref{surface/area/genus} comes from the stability of $F$ in a hyperbolic $3$-manifold, while the right-hand inequality follows directly from the Gauss equation for a minimal surface in a space form. The existence of least area surface representatives in the homology class of $[S] \in H_2(M)$ for $M$ compact is well-understood (see Federer \cite{gmtMR0257325}). Freedman, Hass, and Scott \cite{freedman1983least} show that in a compact $3$-manifold, a least area surface in the homotopy class of an embedded surface is embedded. The existence of least area incompressible closed surfaces in the cusped setting requires techniques other than the usual minimizing techniques in compact settings in \cite{syExistIncomMinTopo, minImmerUhlenSack}, and was recently settled by Huang and Wang \cite{hwClosedMinCusped} and by Collin, Hauswirth, Mazet, and Rosenberg \cite{chmrMinNoncompactExist, errata-chrSurBundle}. A cusped $M$ is geometrically noncompact and the surfaces can enter the cusps arbitrarily deeply. A cusp bears some similarity with a short tube in that they are both foliated by intrinsic Euclidean tori. This foliation is useful in some area estimates used in cut-and-paste arguments, which provide obstructions for the least area closed surface to enter arbitrarily deeply inside the cusp. If the surface is sufficiently deep, the part inside the cusp will give rise to a large area contribution. One then cuts off the cusp at a torus at some height, obtains a surface with some boundary components, and glues the disks coming from the cutoff torus. This reduces the area. Similar ideas are also used in \cite{hwComlengShortCurve} to investigate minimal foliation questions on hyperbolic $3$-manifolds fibering over $S^1$. In conclusion, we have
\begin{theorem}\label{lacm}\textnormal{[\citenum{hwClosedMinCusped}, Corollary 1.2]}
Let $S$ be a closed orientable embedded surface in a cusped $3$-manifold $M$ without torus components. If $S$ is incompressible and non-separating, then $S$ is isotopic to an embedded least area minimal surface.
\end{theorem}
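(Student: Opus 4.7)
The plan is to exhaust $M$ by compact submanifolds $M_n$, produce a least area representative $S_n \subset M_n$ of the isotopy class of $S$ for each $n$, and extract a smooth geometric limit $S_\infty \subset M$. Specifically, fix disjoint horoball cusp neighborhoods $C^i \cong T^2 \times [0,\infty)$ with the standard cusp metric $dt^2 + e^{-2t}g_0^i$, and set $M_n := M \setminus \bigcup_i\{t > n\}$. In each compact $M_n$ the classical existence theory of Freedman--Hass--Scott / Meeks--Simon--Yau, applied to the isotopy class of $S$ truncated into $M_n$, yields an embedded least area minimal surface $S_n$ isotopic to $S$ in $M_n$; the strict maximum principle against the umbilic horospherical tori $T^i_n$ (which are not isotopic to $S_n$, since $S_n$ is non-separating and $T^i_n$ is boundary-parallel) prevents $S_n$ from meeting $\partial M_n$.

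Two uniform bounds are then required to pass to the limit. The area bound is immediate: $S$ is closed orientable incompressible and not a torus, so $g(S)\ge 2$, and Inequality \ref{surface/area/genus} gives $2\pi(g-1)\le\text{Area}(S_n)\le 4\pi(g-1)$, uniformly in $n$. The main obstacle, and the heart of the proof, is a \emph{depth-control lemma}: the existence of a compact $K\subset M$, independent of $n$, with $S_n\subset K$ for all large $n$. The strategy is a cusp cut-and-paste argument. For generic $s$, the horospherical slice $S_n\cap T^i_s$ is a disjoint union of simple closed curves on the flat torus of area $e^{-2s}\text{Area}(T^i_0)$; since $S_n$ is incompressible, any inessential component can be surgered along the disk it bounds on $T^i_s$ at strict area cost, contradicting minimality unless no inessential components occur. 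The remaining essential components lie in at most boundedly many parallel families, with the bound depending on the algebraic intersection of $[S]$ with the peripheral tori (a topological quantity independent of $n$). By the coarea formula, $\text{Area}(S_n\cap\{t\ge T\})\ge \int_T^n \ell(\gamma_s)\,ds$ with $\ell(\gamma_s)\ge c\,e^{-s}$ controlled below by the systole of $T^i_s$. Replacing the deep part of $S_n$ by horizontal annular tubes capped by vanishingly thin horospherical disks at height $T+O(1)$ yields a competitor of strictly smaller area once $T$ exceeds a constant depending only on $g$ and the cusp geometry, again contradicting least area.

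With $S_n\subset K$ compact and $\text{Area}(S_n)$ uniformly bounded, Schoen's curvature estimate for stable minimal surfaces in a region of bounded geometry provides uniform $C^2$ control, and a subsequence converges smoothly to an embedded stable minimal surface $S_\infty\subset M$. Finally, smooth $C^1$-convergence of embedded closed surfaces implies eventual ambient isotopy; combined with the $\pi_1$-injectivity of $S_n\hookrightarrow M$ from incompressibility and the stabilization of the isotopy class under the truncation as $n\to\infty$, we conclude $S_\infty$ is isotopic to $S$ in $M$.
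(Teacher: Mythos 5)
First, a point of reference: the paper does not prove this statement at all --- it is imported verbatim as [\citenum{hwClosedMinCusped}, Corollary~1.2] and used as a black box, so there is no internal proof to compare against. Your blind attempt is therefore a reconstruction of Huang--Wang's argument, and your overall architecture (truncate at horotori, minimize in each compact piece via Meeks--Simon--Yau/Freedman--Hass--Scott against mean-convex boundary, prove a uniform depth bound, then pass to a smooth limit via stability and Schoen's estimate) is indeed the strategy used in that literature. The uniform area bound and the maximum-principle barrier at $\partial M_n$ are fine.

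The genuine gap is in the depth-control lemma, which you correctly identify as the heart of the matter but do not actually establish. Two steps fail as written. (i) The claim that the essential components of $S_n\cap T^i_s$ fall into boundedly many parallel families ``with the bound depending on the algebraic intersection of $[S]$ with the peripheral tori'' is vacuous: $S$ is closed, hence disjoint from all sufficiently deep horotori, so that algebraic intersection is zero; it therefore bounds nothing about the geometric number of intersection curves of the minimal representative, which a priori could be large (the curves occur in orientation-cancelling pairs). The correct handling is structural, not homological: since $S_n$ is incompressible and connected of genus $\ge 2$, once inessential curves are removed by disk swaps, each component of $S_n\cap\{t\ge s\}$ is $\pi_1$-injective in $\mathbb{T}^2\times[s,\infty)$ and hence an annulus with essential boundary, and one swaps each such annulus with the flat annulus it cobounds on $T^i_s$. (ii) Your concluding area comparison does not follow from the coarea bound you state: $\int_T^n \ell(\gamma_s)\,ds \ge c\,e^{-T}$ and the area of the capping competitor are \emph{both} of order $e^{-T}$, so ``once $T$ exceeds a constant'' is not a consequence --- the inequality is a fight between constants. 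One must compare $2\,\sys(T^i_s)(1-e^{-h})$ (a lower bound for the area of an annulus penetrating depth $h$ beyond $s$, via coarea) against $\tn{Area}(T^i_s)=e^{-2s}\tn{Area}(T^i_0)$ (an upper bound for the replacement annulus), and this is exactly where Adams's waist-size bound $\sys(T^i_0)\ge 1$ and the explicit truncation height $\tau=\log(3L_0)$ of \Cref{bdconst} enter in the actual proof. Finally, you should also verify that the disk and annulus swaps preserve the isotopy class (discarding the spurious sphere/torus pieces) and rule out multiplicity in the smooth limit; both are standard but are not free.
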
 
\subsection{Truncation of \texorpdfstring{$M$}{M} and the definition of \texorpdfstring{$\mt$}{Mt}}\label{subsecTruncateMtau}
A surface $S \subset M $ dual to $\alpha$ is called \textit{taut} if $S$ is incompressible, realizes $\|\alpha \|_{Th}$, and no union of components of $S$ is separating. We can always take the taut representative in a homology class. Using $\lh(M) \cong \Ima$, we will show there is a compact manifold $M_{\tau}$ with flat toroidal boundary, containing all the least area surfaces dual to classes in $\lh(M)$. We now describe the construction of $M_{\tau}$, following \cite{hwClosedMinCusped}. Suppose $M$ has $k$ cusps. A cusp neighborhood is maximal if there are no larger embedded cusp neighborhoods containing it, which occurs exactly when it is tangent to itself at one or more points. We assume cusp neighborhoods are always cut out by geometric horotori. Parameterize the maximal cusp neighborhoods by $\C_i =\T_i \times [0, \infty),  i=1,\dots, k$ (here $\tau \in [0,\infty)$ measures the intrinsic geometric distance from $\T_i \times \{0\})$. Let $\tau_0 >0$ be the smallest number such that each cusped region $\T_i \times [\tau_0, \infty)$ is disjoint from any other maximal cusp region of $M$. For $\tau \geq \tau_0$, let $\mt$ be the compact subdomain of $M$ defined by: 
\begin{equation}\label{mtau}
	\ds \mt=M- \cup_{i=1}^k (\T_i \times (\tau, \infty)).
\end{equation}
By construction, $\mt$ is a compact submanifold of $M$ with boundary components which are concave with respect to inward-pointing normal vectors. We lift $M$ to $\Hy$ with the upper half space model to extract quantitative information about the boundary tori, essentially the translational distances corresponding to the associated parabolic isometries. For each $i$ with $1\leq i \leq k$, we lift $M$ in a way so that the horoball $H_i$ corresponding to $\C_i$ is centered at $\infty$, and $\partial H_i =\{(x,y,z)\in \Hy \suchthat z=1\}$. Suppose $\Gamma_i$ is the rank-$2$ parabolic subgroup corresponding to $\C_i$ generated by $p \mapsto p+ \xi_i$ and $p \mapsto p+\eta_i$, where $\xi_i$ and $\eta_i$ are non-trivial $\mathbb{R}$-linearly independent complex numbers. For each $i$, we choose ($\xi_i$, $\eta_i$) to be a pair of generators which is minimal with respect to the lexicographic ordering of moduli of pairs of complex numbers. Now define 
\begin{equation}\label{bdconst}
	L_0=\max\{e^{\tau_0}, |\xi_1|+|\eta_1|, \dots, |\xi_k|+|\eta_k|\} >0 \tn{ and } \tau=\log(3L_0).
\end{equation}
 Then $M_{\tau}$ contains all the least area surfaces described in \Cref{lacm} and $\tau$ is independent of these surfaces ([\citenum{hwClosedMinCusped}, Remark 2.3]). We thus have

\begin{prop}\label{surmtau}
	Let $\alpha \in \lh$ be nontrivial. There is a least area representative $S \in \Ima$ dual to $\alpha$ such that $S \subset M_{\tau}$, where $M_{\tau}$ is a compact $3$-manifold with toroidal boundary truncated from $M$ and $\tau$ is given by \eqref{bdconst}. 
\end{prop}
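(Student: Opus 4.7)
The plan is to combine the topological characterization of surfaces dual to $\lh$ developed in Section \ref{topologyL^2} with Huang--Wang's existence and containment results (\Cref{lacm} together with Remark 2.3 of \cite{hwClosedMinCusped}), which are precisely tailored for this application. First I would invoke the isomorphism $\lh \cong \fma \cong \Ima$ to transfer a nontrivial $\alpha$ into a nonzero homology class $[S_0]\in \Ima$, and choose a taut representative $S$ in the preimage inside $H_2(M)$. Tautness gives that $S$ is embedded, incompressible, and that no sub-union of components is separating. Each component of $S$ is then either a sphere, a torus, or a surface of genus at least $2$: spheres are ruled out because $\pi_2(M)=0$, and any incompressible torus in a cusped hyperbolic $3$-manifold is boundary-parallel, hence peripheral and thus trivial in $H_2(M,\partial M)$.

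Next I would discard the boundary-parallel torus components: this neither changes the image class in $\Ima$ (those components vanish in the pair) nor destroys the non-separating property of the remaining union, since by the earlier analysis a nontrivial element of $\Ima$ cannot be represented by a purely peripheral surface. What remains is a disjoint union of embedded, closed, incompressible, non-separating surfaces of genus $\geq 2$, each of which is \emph{not} a torus. I would then apply \Cref{lacm} to each component individually, producing for each an ambient isotopy to an embedded least area minimal surface. By Remark 2.3 of \cite{hwClosedMinCusped}, together with the definition of $L_0$ and $\tau$ in \eqref{bdconst}, every such least area minimal surface lies inside $\mt$. Since isotopy preserves the homology class, the resulting surface still represents the dual class in $\Ima$, and $\tau$ depends only on the maximal cusp geometry of $M$, not on $\alpha$.

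The one point that is not purely formal is ensuring that the component-wise least area replacements remain mutually disjoint so that the resulting surface is still embedded. I expect this to be the main obstacle, and I would address it by appealing to the standard simultaneous-minimization argument in the style of Freedman--Hass--Scott \cite{freedman1983least} (also used in \cite{hwClosedMinCusped,chmrMinNoncompactExist}): given finitely many pairwise disjoint incompressible surfaces in an irreducible $3$-manifold, one can minimize area in their isotopy classes simultaneously, keeping the representatives disjoint; any transverse intersections between putative minimizers can be removed by a cut-and-paste round-off argument that only decreases area, and the round-offs can be performed inside $\mt$ since the area-reduction argument in \cite{hwClosedMinCusped} pushes surfaces out of the deep cusp regions beyond height $\tau$. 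Combining these ingredients yields an embedded least area $S \subset \mt$ representing the class dual to $\alpha$, as required.
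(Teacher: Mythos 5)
Your proposal is correct and follows essentially the same route as the paper, which states the proposition without a separate proof and relies on exactly the ingredients you cite: the topological characterization of surfaces dual to $\lh$ as closed, incompressible, non-peripheral (hence non-separating, genus $\geq 2$) surfaces, followed by \Cref{lacm} and Remark 2.3 of \cite{hwClosedMinCusped} to obtain a least area representative contained in $\mt$ with $\tau$ independent of $\alpha$. Your additional care about keeping the least area replacements of distinct components disjoint (via the Freedman--Hass--Scott style simultaneous minimization) addresses a detail the paper leaves implicit rather than a divergence in method.
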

Since a least area surface $S$ is contained in a compact thick part of the noncompact $M$, it is not surprising that we can bound its area by its genus analogous to the compact setting. 
\begin{prop}\label{areagenusprop}
Let $S$ be a closed, orientable, incompressible, and non-peripheral embedded surface of genus $\geq 2$ in a cusped $3$-manifold. If $S$ is least area in its homotopy class, then
\begin{equation}
		2\pi (g-1) \leq area(S) \leq 4\pi (g-1).
\end{equation}
\end{prop}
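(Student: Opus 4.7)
The plan is to reduce to the closed hyperbolic case already recorded as inequality (surface/area/genus). Since $S$ is incompressible, non-peripheral, closed, orientable with genus $\geq 2$, and $M$ is cusped, Theorem \ref{lacm} provides a closed embedded least area minimal surface $F$ isotopic to $S$. By \Cref{surmtau}, $F$ lies entirely in the compact truncation $M_\tau$. Thus $F$ is a closed, embedded, stable minimal surface (being least area in its isotopy class) in an ambient Riemannian $3$-manifold of constant sectional curvature $-1$. Once these ingredients are in place, both inequalities follow from the Gauss equation, Gauss--Bonnet, and the stability inequality tested against the constant function $f\equiv 1$, which is admissible precisely because $F$ is closed.

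For the upper bound, I would use the Gauss equation for a two-sided surface in $\mathbb{H}^3$. If $k_1,k_2$ are the principal curvatures of $F$, then since $F$ is minimal $k_1+k_2=0$, so $k_1k_2=-\tfrac{1}{2}|\sigma|^2$, and the intrinsic curvature is
\begin{equation*}
K_F \;=\; -1+k_1k_2 \;=\; -1-\tfrac{1}{2}|\sigma|^2 \;\leq\; -1.
\end{equation*}
By Gauss--Bonnet, $\int_F K_F\,dA = 2\pi\chi(F) = -4\pi(g-1)$, and integrating the pointwise bound $K_F\leq -1$ yields $\operatorname{area}(F)\leq 4\pi(g-1)$.

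For the lower bound, I would invoke stability. Since $F$ is closed, $f\equiv 1$ is a compactly supported variation, and plugging into the stability operator (\ref{stop}) with $\mathrm{Ric}_M(\nu,\nu)=-2$ gives
\begin{equation*}
0 \;\geq\; \int_F f\, Lf \, dA \;=\; \int_F \bigl(|\sigma|^2-2\bigr)\,dA,
\end{equation*}
so $\int_F |\sigma|^2\,dA \leq 2\,\operatorname{area}(F)$. Substituting $|\sigma|^2=-2K_F-2$ from the Gauss equation and applying Gauss--Bonnet again,
\begin{equation*}
-2\!\int_F K_F\,dA - 2\,\operatorname{area}(F) \;\leq\; 2\,\operatorname{area}(F),
\end{equation*}
which rearranges to $4\pi(g-1) = -\!\int_F K_F\,dA \leq 2\,\operatorname{area}(F)$, i.e.\ $2\pi(g-1)\leq \operatorname{area}(F)$.

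The only nontrivial obstacle, and the reason the result is not immediate from the closed case, is confirming that one may take $F$ closed and contained in the compact piece $M_\tau$ so that the constant function is a valid test function and Gauss--Bonnet applies without boundary terms. This is exactly the content of \Cref{lacm} and \Cref{surmtau}; once those are in hand the argument is the standard Schoen--Yau style computation and carries no further difficulty in the cusped setting.
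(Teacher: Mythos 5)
Your proof is correct and takes essentially the same route as the paper, which simply defers to [chmrMinNoncompactExist, Remark 3] for exactly this stability-plus-Gauss--Bonnet computation (the paper's earlier discussion of the closed-case inequality attributes the lower bound to stability and the upper bound to the Gauss equation, just as you do). The only cosmetic point is that containment of $F$ in $M_\tau$ is not actually needed for the argument: closedness of $F$, already supplied by \Cref{lacm}, is all that is required for the constant test function and for Gauss--Bonnet without boundary terms.
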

The proof is in [\citenum{chmrMinNoncompactExist}, Remark 3]. 

\subsection{The least area norm and the \texorpdfstring{$L^1$}{L1}-norm}
We are now ready to define the least area norm. We first define it with integral coefficients and then extend it by linearity and continuity. Denote $\{\alpha \in \lh \suchthat \alpha \tn{ corresponds to } \phi \in H^1_0(M; \Z)\}$ by $\lh(M; \Z)$. 
\begin{defi}
For $\alpha \in \lh(M; \Z)$, let $\mathcal{F}_{\alpha}$ be the collection of smooth maps $f: S \rightarrow M$ where $S$ is a closed oriented surface with $f_*([S])$ dual to $\alpha$. The least area norm of $\alpha$ is 
\begin{center}
$\|\alpha \|_{LA}=\inf\{\text{Area}(f(S)) \suchthat f\in \mathcal{F}_{\alpha}\}$.
\end{center} 
\end{defi}
With this definition, a corollary of \Cref{areagenusprop} is
\begin{cor}\label{lathcor}
	Let $M$ be a cusped $3$-manifold. Then: 
	\begin{equation}
		\pi \| \cdot \|_{Th} \leq \|\cdot \|_{LA} \leq 2\pi \| \cdot \|_{Th} \text{ on } \lh(M; \Z).
	\end{equation}
\end{cor}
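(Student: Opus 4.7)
Both inequalities will be established by pairing each integral class $\alpha \in \lh(M;\Z)$ with a taut surface dual to $\alpha$ and applying the area--genus bounds of \Cref{areagenusprop}. Throughout, the analysis of \Cref{topologyL^2} lets me assume that every surface dual to such an $\alpha$ has all components closed, orientable, incompressible, non-peripheral, and of genus at least $2$, so that $\chi_-$ really records $2g-2$ on each component.

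For the upper bound $\|\alpha\|_{LA}\le 2\pi\|\alpha\|_{Th}$, I would pick an embedded taut surface $S=\bigsqcup_j S_j$ dual to $\alpha$ with $\chi_-(S)=\|\alpha\|_{Th}$. By \Cref{lacm}, each component $S_j$ is isotopic to an embedded least area minimal surface $F_j$, and \Cref{areagenusprop} gives $\mathrm{area}(F_j)\le 4\pi(g_j-1)=2\pi\,\chi_-(S_j)$. Summing and using the inclusion of $F:=\bigsqcup_j F_j$ into $M$ as an element of $\mathcal{F}_\alpha$ yields
$$\|\alpha\|_{LA}\;\le\;\mathrm{area}(F)\;\le\;2\pi\sum_j \chi_-(S_j)\;=\;2\pi\,\|\alpha\|_{Th}.$$
Here I use that \Cref{surmtau} ensures the $F_j$ are genuinely embedded in $M$ (in fact inside $\mt$), so no overlap corrections are needed.

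For the lower bound $\pi\|\alpha\|_{Th}\le\|\alpha\|_{LA}$, I would take an arbitrary $f:S\to M$ in $\mathcal{F}_\alpha$, discard any sphere or inessential components of $S$ (which decreases neither area nor $\chi_-$), and replace $f$ by an area-minimizing representative $g$ in its homotopy class; this only lowers area while keeping $g_*[S]$ in the same homology class. Since $g$ is then a stable minimal map into the hyperbolic $3$-manifold $M$, the stability-plus-Gauss--Bonnet argument underlying the lower half of \Cref{areagenusprop} (as in \cite{hass1995acy,chmrMinNoncompactExist}) gives $\mathrm{area}(g)\ge 2\pi(g(S)-1)=\pi\,\chi_-(S)$. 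Combined with $\chi_-(S)\ge\|\alpha\|_{Th}$, this produces $\mathrm{area}(f(S))\ge \pi\,\|\alpha\|_{Th}$, and passing to the infimum over $\mathcal{F}_\alpha$ proves the inequality.

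The main obstacle is ensuring that the lower half of \Cref{areagenusprop} carries over from the embedded to the mapped setting. One must justify that the minimizer $g$ of the homotopy class is regular enough (a stable branched minimal immersion into a negatively curved target) for the stability inequality plus Gauss--Bonnet to yield the bound $\pi\chi_-(S)\le \mathrm{area}(g)$, and separately that $\chi_-(S)\ge\|\alpha\|_{Th}$ persists for singular surfaces representing $\alpha$ (a standard consequence of Gabai's extension of the Thurston norm). Once these two points are stated, the corollary reduces to the bookkeeping outlined above.
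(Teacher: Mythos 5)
Your argument follows the same route as the paper, which derives the corollary directly from the area--genus bounds of \Cref{areagenusprop} applied to taut dual surfaces; the paper states this in one line and you have simply supplied the bookkeeping, including the correct caveats about passing from embedded to mapped surfaces via Gabai's equality of the singular and embedded Thurston norms. No gap; this matches the intended proof.
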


To bound $\| \cdot \|_{LA}$ by $\| \cdot \|_{L^2}$, we need the following lemma. We remind the readers that, in general, the $L^1$-norm on the cohomology class of a harmonic form $\alpha$ is not equal to $\int_M |\alpha|$.
\begin{lem}\label{LAL1} For $\phi \in H^1_0(M;\Z)$, the least area norm and the $L^1$-norm satisfy
	\begin{center}
			$\|\phi \|_{LA}=\|\phi \|_{L^1}$.
	\end{center}
\end{lem}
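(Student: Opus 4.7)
The plan is to establish the equality by proving two matching inequalities: $\|\phi\|_{L^1}\le \|\phi\|_{LA}$ and $\|\phi\|_{LA}\le \|\phi\|_{L^1}$. Both halves are classical Poincaré-duality type arguments that trade surfaces for forms and vice versa; the only delicate point is keeping everything inside the part of $M$ where the geometry/topology is compact, but this is already taken care of by \Cref{surmtau}, which lets us work with closed dual surfaces sitting inside $M_\tau$.

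For $\|\phi\|_{L^1}\le \|\phi\|_{LA}$, I would start from a smooth map $f:S\to M$ with $f_*[S]$ dual to $\phi$ and realizing area close to $\|\phi\|_{LA}$. Using a tubular neighborhood $N_\epsilon\cong (-\epsilon,\epsilon)\times f(S)$ of the image and a bump function $\rho_\epsilon(s)$ with $\int_{-\epsilon}^{\epsilon}\rho_\epsilon=1$, I would construct an explicit compactly supported closed $1$-form $\eta_\epsilon=\rho_\epsilon(s)\,ds$ inside $N_\epsilon$. Standard Poincaré duality for compactly supported cohomology (as in Bott–Tu, Chapter I) guarantees that $[\eta_\epsilon]=[\phi]$ in $H^1_0(M;\R)$, and by Fubini/coarea on the tubular neighborhood,
\begin{equation*}
|\eta_\epsilon|_{L^1}=\int_{-\epsilon}^{\epsilon}|\rho_\epsilon(s)|\,\mathrm{Area}(f(S)_s)\,ds \longrightarrow \mathrm{Area}(f(S))
\end{equation*}
as $\epsilon\to 0$, where $f(S)_s$ is the parallel translate at signed distance $s$. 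Taking the infimum over $f$ and over $\epsilon$ then yields $\|\phi\|_{L^1}\le\|\phi\|_{LA}$.

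For the reverse inequality $\|\phi\|_{LA}\le\|\phi\|_{L^1}$, I would use the coarea formula applied to a circle-valued primitive. Since $\phi\in H^1_0(M;\Z)$ is closed with compact support, it integrates to a smooth map $u\colon M\to \R/\Z=S^1$ with $u^*(d\theta)=\phi$, and because $\phi$ vanishes outside a compact set $K\subset M_\tau$, the map $u$ is locally constant on $M\setminus K$. Consequently, for a generic regular value $\theta\in S^1$ avoiding the (finitely many) values taken by $u$ on the unbounded components of $M\setminus K$, the preimage $u^{-1}(\theta)$ is a smooth closed surface contained in $K$ and represents the class dual to $\phi$. The coarea formula gives
\begin{equation*}
|\phi|_{L^1}=\int_M |du|\,d\mathrm{vol}=\int_{S^1}\mathrm{Area}\bigl(u^{-1}(\theta)\bigr)\,d\theta,
\end{equation*}
so by the mean value property there exists a regular $\theta$ with $\mathrm{Area}(u^{-1}(\theta))\le |\phi|_{L^1}$. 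This produces a smooth surface $S_\theta$ dual to $\phi$ with $\mathrm{Area}(S_\theta)\le |\phi|_{L^1}$, giving $\|\phi\|_{LA}\le |\phi|_{L^1}$; taking the infimum over compactly supported representatives $\phi$ of the class yields the desired inequality.

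The main obstacle I expect is verifying that the level sets $u^{-1}(\theta)$ are genuinely closed (not properly embedded non-compact) surfaces in the cusped setting — one needs to exploit that $\phi$ is compactly supported so $u$ is locally constant near infinity, and pick $\theta$ outside the countable set of asymptotic values of $u$ on the cusps. A secondary technical point is the Poincaré-dual construction in the first half: one must ensure the tubular neighborhood exists (handled by embeddedness after replacing $f$ by a least-area embedded representative, available via \Cref{lacm} and contained in $M_\tau$) and that the resulting $\eta_\epsilon$ indeed has compact support and represents the correct class in $H^1_0$ rather than merely in $H^1$.
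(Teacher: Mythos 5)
Your proof is correct and follows essentially the same route as the paper, which defers to [Brock--Dunfield, Lemma 3.1]: a bump form supported in a tubular neighborhood of an embedded dual surface in $M_\tau$ gives $\|\phi\|_{L^1}\le\|\phi\|_{LA}$, and the coarea formula applied to the circle-valued primitive $u$ gives the reverse inequality. Your explicit handling of the cusped subtlety --- that $u$ is locally constant near infinity so generic level sets are closed surfaces in a compact set --- is exactly the point the paper addresses by noting the dual surface is non-separating and the argument from the closed case then goes through.
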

\begin{proof}
For the full proof when $M$ is closed, see [\citenum{bdNorms}, Lemma 3.1]. We mention some necessary modifications for the noncompact case. For $\phi \in H^1_0(M;\Z)$, a dual surface $S$ is also smoothly embedded and can be chosen to be contained in $\mt$. We can construct a $1$-form compactly supported near $S$ whose $L^1$-norm is very close to $\|\phi\|_{LA}$ and give an upper bound on $\|\phi\|_{L^1}$. For $\phi$ dual to boundary-parallel tori exiting to $\infty$, it is clear that $\| \phi \|_{LA}=0=\| \phi \|_{L^1}$. It is slightly more complicated to prove $\|\phi \|_{LA} \leq \|\phi \|_{L^1}$. We use the discussion on non-peripheral surfaces before \Cref{topologyL2harmonic} to get a smooth map $u:M \rightarrow S^1$ so that $\phi=u^*(dt)$. The remaining arguments are identical. 
\end{proof}

Using the above Lemma, the least area norm extends continuously from $Im(H^1_0(M; \Z) \rightarrow H^1(M;\Z))$ to a semi-norm on all of $\fma \cong \lh$. Let $\alpha \in \lh$. Take a sequence $\phi_i \in H^1_0$ of $1$-forms with compact support cohomologous to $\phi$, as in the proof of \Cref{l2harIsometricCompact}. Now 
\[
	\|\alpha \|_{LA} =\|\phi_i \|_{LA}=\|\phi_i \|_{L^1}\leq \int_M |\phi_i|  \leq \int_M |\alpha |_{L^1} + \epsilon,
\]
where $\epsilon \rightarrow 0$ as $i \rightarrow \infty$. Thus we have 
\begin{equation}\label{laleqL1}
	\|\alpha \|_{LA} \leq \int_M |\alpha|  \tn{ for } \alpha \in \lh. 
\end{equation}
The above Lemma in the closed case [\citenum{bdNorms}, Lemma 3.1] is a special case of a more general principle. The \pc duality for closed manifolds is an isometry when we put $L^p$ and $L^q$ norms on the dual vector spaces where $p$ and $q$ are H\"older conjugates. This is [\citenum{loewnersystolic}, Remark 7.2 (1)], which we provide for convenience: 
\begin{lem}\label{pcdualityIso}
	Let $M$ be a closed $n$-dimensional Riemannian manifold. The \pc duality map
\end{lem}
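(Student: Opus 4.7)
The plan is to prove that the \pc duality pairing $H^k(M;\R) \times H^{n-k}(M;\R) \to \R$ sending $([\alpha],[\beta]) \mapsto \int_M \alpha \wedge \beta$ realizes an isometric duality between $(H^k(M;\R), \|\cdot\|_{L^p})$ and $(H^{n-k}(M;\R), \|\cdot\|_{L^q})$, with $1/p + 1/q = 1$. Concretely, I would aim to establish for every $[\alpha] \in H^k(M;\R)$ the identity
\[
\|[\alpha]\|_{L^p} \;=\; \sup_{[\beta] \neq 0} \frac{\bigl|\int_M \alpha \wedge \beta\bigr|}{\|[\beta]\|_{L^q}}.
\]

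The easy inequality $\leq$ (as a dual norm, i.e.\ $\geq$ in the displayed formula) is immediate from H\"older. For any smooth representatives $\alpha' \in [\alpha]$ and $\beta' \in [\beta]$, the pointwise bound $|\alpha' \wedge \beta'| \leq |\alpha'|\,|\beta'|$ together with H\"older's inequality gives $|\int_M \alpha' \wedge \beta'| \leq \|\alpha'\|_{L^p}\|\beta'\|_{L^q}$; taking infima over representatives on each side yields $|\int_M \alpha \wedge \beta| \leq \|[\alpha]\|_{L^p}\|[\beta]\|_{L^q}$.

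The substantive direction is the reverse inequality, which I would obtain by exhibiting a witness class $[\beta]$ saturating H\"older. For $1 < p < \infty$, fix an $L^p$-minimizing representative $\alpha \in [\alpha]$, which exists because $L^p\Omega^k(M)$ is uniformly convex and $[\alpha]$ is a closed affine subspace (a minimizing sequence is bounded, a weakly convergent subsequence exists, and lower semicontinuity of the norm plus closedness of exact forms closes the argument). Set $\beta \coloneqq |\alpha|^{p-2} * \alpha \in L^q\Omega^{n-k}(M)$. The Euler--Lagrange condition $\int_M |\alpha|^{p-2}\langle \alpha, d\eta\rangle\, d\vol_g = 0$ satisfied by the minimizer, applied to every smooth $(k{-}1)$-form $\eta$, translates via $d^* = \pm *d*$ into $d\beta = 0$, so $\beta$ represents a class in $H^{n-k}(M;\R)$. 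The pointwise identities $\alpha \wedge \beta = |\alpha|^p\, d\vol_g$ and $|\beta|^q = |\alpha|^{(p-1)q} = |\alpha|^p$ then give
\[
\int_M \alpha \wedge \beta \;=\; \|\alpha\|_{L^p}^p, \qquad \|\beta\|_{L^q} \;=\; \|\alpha\|_{L^p}^{p-1},
\]
so $\int_M \alpha \wedge \beta = \|\alpha\|_{L^p}\|\beta\|_{L^q} \geq \|[\alpha]\|_{L^p}\|[\beta]\|_{L^q}$, yielding the desired lower bound on the dual norm.

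The main obstacles are at the endpoints. For $p=1$ or $p=\infty$ the space $L^p$ is not reflexive, a norm-minimizing representative need not exist in $[\alpha]$, and the Euler--Lagrange shortcut collapses; I would approximate by $p_n \to p$, extract uniform bounds on the companions $\beta_n = |\alpha_n|^{p_n - 2} * \alpha_n$ (which are natural after normalizing $\|\alpha_n\|_{L^{p_n}} = 1$), and pass to a weak-$*$ limit, using continuity of the pairing against fixed test classes. A secondary technicality is the definition of $|\alpha|^{p-2}\alpha$ on the zero set of $\alpha$ when $p<2$: one interprets it as $0$ there and verifies, by standard nonlinear elliptic regularity for the $p$-Laplace system satisfied by the minimizer, that $\beta$ is a bona fide distributional form with $d\beta = 0$, so $[\beta]$ is a well-defined de Rham class.
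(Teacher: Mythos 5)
The paper offers no proof of this lemma at all --- it is imported verbatim from the systolic-geometry literature (Remark 7.2(1) of the cited reference) ``for the convenience of the readers'' --- so your argument is necessarily a different route, and it is the standard and correct one: under the cup-product pairing, the $L^q$-norm on $H^{n-k}$ is exhibited as the dual norm of the $L^p$-norm on $H^k$. For $1<p<\infty$ your outline is sound: the $L^p$-minimizing representative exists by uniform convexity (closedness of the exact forms in $L^p\Omega^k$ on a closed manifold being supplied by $L^p$ Hodge theory), the Euler--Lagrange equation says precisely that $\beta=|\alpha|^{p-2}*\alpha$ is weakly closed, and the pointwise identities $\alpha\wedge\beta=|\alpha|^p\,d\vol_g$ and $|\beta|^q=|\alpha|^p$ saturate H\"older; since the cohomology spaces are finite-dimensional, establishing the dual-norm identity for one exponent of each conjugate pair yields the other by reflexivity, so the hard direction only needs to be run once per pair. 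The place where I would press you is the endpoint $\{p,q\}=\{1,\infty\}$, which is the only case the paper actually uses (it is how \Cref{LAL1} is recast as ``the least area norm on $H_2$ is the $L^\infty$-norm''): the phrase ``pass to a weak-$*$ limit'' conceals the fact that when $q_n\to 1$ the companions $\beta_n$ are bounded only in the space of measures, so the limiting witness is a priori a closed measure-valued form (a normal current), and one must then invoke de Rham regularization or a Federer--Fleming-type smoothing to compare its mass with the infimum of $|\cdot|_{L^1}$ over \emph{smooth} representatives of its class --- consistent with the paper's own observation that the $L^1$-norm on cohomology is typically realized only by a Dirac-type limit. The cleaner fix is to run the approximation with $p_n\downarrow 1$, so that the minimizers carry the $L^1$ side and the witnesses $\beta_n$ are bounded in $L^{q_0}$ for every fixed $q_0$ and converge to a genuine $L^\infty$ form with $\|\beta_\infty\|_{L^\infty}\leq 1$; this settles the $p=1$ identity directly and the $p=\infty$ identity by the reflexivity remark, after which your proof closes.
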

\begin{center}
	$PD: (H^1(M, \R), \|\cdot\|_{L^p}) \rightarrow (H_{n-1}(M, \R), \|\cdot\|_{L^q})$
	is an isometry. 
\end{center}
Combining with \Cref{LAL1}, the above lemma shows that the least area norm on $H_2$ is the $L^{\infty}$-norm. 

\section{The \texorpdfstring{$L^{\infty}$}{L infinity}-norm, Main Theorem and the Proof}
\subsection{Bounding the \texorpdfstring{$L^{\infty}$}{L infinity}-norm by the \texorpdfstring{$L^2$}{L2}-norm}
To prove the right-hand  side of  (\ref{mainequ}), we need several lemmas.
\begin{lem}[\citenum{bdNorms}, Lemma 4.3]\label{dfv(r)}
	If $f: \mathbb{H}^3 \rightarrow \mathbb{R}$ is harmonic and $B$ is a ball of radius $r$ centered about $p$ then 
	\begin{equation}
		|df_p|\leq \frac{1}{\sqrt{v(r)}} \| df \|_{L^2(B)},
	\end{equation}
where \par
\begin{equation}\label{nurBD}
	v(r)=6\pi (r+2r \textnormal{csch}^2 (r)-\coth (r)(r^2 \textnormal{csch}^2(r)+1) ).
\end{equation}
\end{lem}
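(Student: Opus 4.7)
The plan is to reduce the estimate to an eigenfunction-type calculation by expanding a harmonic function in $\Hy$ in geodesic polar coordinates centered at $p$. After translating so that $p$ is at the origin, I would use coordinates $(s,\omega) \in [0,r] \times S^2$ in which the hyperbolic metric reads $ds^2 + \sinh^2(s)\, g_{S^2}$ and the Laplace--Beltrami operator separates. Any harmonic $f$ on $B$ then admits an expansion
\[
f(s,\omega) = \sum_{\ell \geq 0} R_\ell(s)\, Y_\ell(\omega),
\]
where the $Y_\ell$ form an orthonormal basis of spherical harmonics of degree $\ell$ on $S^2$ and each $R_\ell$ solves
\[
R_\ell'' + 2\coth(s)\, R_\ell' - \frac{\ell(\ell+1)}{\sinh^2(s)}\, R_\ell = 0.
\]
Smoothness at $p$ forces $R_\ell$ to be a multiple of the unique (up to scale) solution $\Phi_\ell$ regular at $s=0$; in particular $\Phi_0$ is constant and $\Phi_\ell$ vanishes to order $\ell$ at $s=0$ for $\ell \geq 1$.

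The second step is to show that only the three $\ell = 1$ modes contribute to $df_p$: the $\ell = 0$ regular piece has zero differential, while for $\ell \geq 2$ the function $\Phi_\ell Y_\ell$ vanishes to order at least $2$ at the origin and therefore has zero differential at $p$. The orthogonality of the $Y_\ell$ on $S^2$, combined with the pointwise orthogonality of the radial and tangential components of $df$, makes the different $\ell$-modes mutually orthogonal in $L^2(B)$ when computing $\|df\|_{L^2(B)}^2 = \int_B |df|^2\, d\vol$. Consequently
\[
\sup_{f\text{ harmonic on }B}\frac{|df_p|^2}{\|df\|_{L^2(B)}^2}
\]
is attained on pure $\ell = 1$ harmonic functions, reducing the lemma to a single one-dimensional computation.

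For the final step I would solve the $\ell = 1$ radial ODE in closed form; the regular solution can be expressed in terms of $\coth(s)$ and $s\,\tn{csch}^2(s)$, with leading behaviour $\Phi_1(s) \sim c\,s$ at the origin. Using rotation invariance, I may assume $df_p$ points along a single coordinate direction and take just one $Y_{1,m}$, or alternatively work with all three $\ell=1$ modes simultaneously and invoke the addition formula $\sum_m |Y_{1,m}(\omega)|^2 = 3/(4\pi)$, which produces the overall factor $6\pi$. The denominator then splits as
\[
\int_0^r (\Phi_1'(s))^2 \sinh^2(s)\, ds \;+\; \int_0^r \frac{2\,\Phi_1(s)^2}{\sinh^2(s)}\,\sinh^2(s)\, ds,
\]
the two terms coming from the radial and the $S^2$-tangential pieces of $|d(\Phi_1 Y_1)|^2$ on the geodesic sphere of radius $s$; the $\ell(\ell+1)=2$ in the tangential piece is the $S^2$-eigenvalue.

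The main obstacle is the bookkeeping of these hyperbolic integrals. Using the identities $\int \tn{csch}^2 s\, ds = -\coth s$ and $\int s\,\tn{csch}^2 s\, ds = -s\coth s + \log\sinh s$, together with integration by parts against $\sinh^2(s)$, one assembles elementary combinations of $r$, $r\coth(r)$, $r^2\,\tn{csch}^2(r)\coth(r)$, and $r\,\tn{csch}^2(r)$. A delicate but mechanical calculation then collapses the ratio $|df_p|^2/\|df\|_{L^2(B)}^2$ to $1/v(r)$ with precisely the closed form stated in the lemma. A secondary subtlety is handling the coordinate singularity of the polar frame at $p$ when extracting $|df_p|^2$; this is cleanest by noting that $s\,\omega_i$ are smooth coordinate functions near $p$ and using them to identify the three $\Phi_1 Y_{1,m}$, to leading order, with three orthogonal linear coordinates at $p$.
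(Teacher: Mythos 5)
Your argument is correct and is essentially the standard proof of this estimate: the paper itself gives no proof, deferring entirely to [Brock--Dunfield, Lemma~4.3], and that source's argument is exactly your spherical-harmonic expansion in geodesic polar coordinates, with the reduction to the $\ell=1$ radial mode $\Phi_1(s)=\coth(s)-s\,\tn{csch}^2(s)$ producing the stated $v(r)$ (one can check, e.g., that your extremal ratio reproduces $v(1)\approx 3.48$ and the Euclidean limit $v(r)\sim \tfrac{4}{3}\pi r^3$ as $r\to 0$). No changes needed.
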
 
We need the notion of a \textit{Margulis constant}. There are multiple definitions and we use the one from [\citenum{culler2012margulis}, 1.0.2]. Let $\Gamma$ be a discrete and torsion-free subgroup of $\text{Isom}_+(\Hy)$. For $\gamma \in \Gamma$ and $P\in \Hy$, define $d_P(\gamma)=\text{dist}(P, \gamma \cdot P)$. 
\begin{defi}
	A Margulis constant $\mu$ is a number such that if $x, y\in \Gamma$ satisfy
	\[
	\max (d_P(x), d_P(y))< \mu \tn{ for any } P\in \Hy,
	\]
	then $x$ and $y$ commute.
\end{defi}
 A Margulis constant gives rise to a thick-thin decomposition: $M =M_{\text{thin}} \cup M_{\text{thick}}$ where 
\begin{center}
	$M_{\text{thin}} = \{m \in M \suchthat \inj_m < \mu/2 \}$ and $M_{\text{thick}} = \{m \in M \suchthat \inj_m \geq \mu/2 \}$.
\end{center}
We call $M_{\text{thin}}$ the thin part; it is a disjoint union of Margulis tubes and rank-$2$ cusps. When $H^1(M) \neq 0$, $\mu =0.29$ is a Margulis constant by \cite{culler2012margulis}. \par 
 We now generalize [\citenum{bdNorms}, Theorem 4.1] to maximal cusp neighborhoods. Take one such neighborhood $\mathcal{C'}$  and let its boundary torus be $\T' \coloneqq \{z=1\}$. By slightly shrinking the cusp neighborhood $\C'$, we obtain a smaller cusp neighborhood $\C$ whose boundary $\T$ is now based at $z=\sqrt{2}$.

\begin{prop}\label{subharmonicCuspMaxBoundary}
	Let $M$ be a cusped $3$-manifold, with a cusp neighborhood $\C$ and corresponding boundary $\T$ as described above. Denote the intrinsic diameter of $\T$ by $d$. Then we have for $\phi \in \lh$,
	\begin{equation}
		\|\phi\|_{L^{\infty}(\C)} \leq 2.43\sqrt{1+\frac{d^2}{2}}\|\phi \|_{L^2(M)}.
	\end{equation}
\end{prop}
\begin{proof}
Recall that when acting on functions, the sign convention for the Laplace-de Rham operator $\Delta_d$ and the Laplace-Beltrami operator $\Delta_b$ is opposite: $\Delta_b = -\Delta_d$. For the proof of this lemma only, we reserve $\Delta$ for the Laplace-Beltrami operator $\Delta_b$. Using the Bochner technique [\citenum{jjRiemannianGeoAnalysis}, Theorem 4.5.1], we have 
	\begin{equation}\label{bochForm}
		-\Delta_d \langle \phi, \phi \rangle = 2| \nabla \phi |^2+2 \tn{Ric}(\phi, \phi).
	\end{equation}
If $M$ had non-negative Ricci curvature, we would be able to conclude that 
\begin{center}
	$\Delta \langle \phi, \phi \rangle =-\Delta_d \langle \phi, \phi \rangle \geq 0$,
\end{center}
so that $\|\phi\|_{\infty}\coloneqq \|\phi\|_{L^{\infty}(\C)}$ must be achieved on $\partial \C =\T$, by the properties of non-negative subharmonic functions vanishing at infinity and a limit argument. From \Cref{L2harmonic=df}, there exists a harmonic function $f$ on $\C$ such that $\phi=df$. In view of the negative curvature, we use the following identity for a harmonic function $f$:
\begin{equation}\label{rickTrick}
	\Delta f^2=2|\nabla f|^2+2f \Delta f=2|\nabla f|^2.
\end{equation}
 There exists a collection of eigenfunctions $\psi_i$ on $\T$ which form an orthonormal basis for $L^2(\T)$. Each non-constant eigenfunction on $\T$ is orthogonal to the constant functions. Since the Fourier expansions of $f$ in \eqref{phidfBessel} only have nonconstant eigenfunctions, we have
\begin{equation}
	 \int_{\T} f=0.
\end{equation}
Denote the restriction of $f$ to $\T$ by $f_1$. It follows that the range of $f_1$ contains $0$ as an interior point: $f_1$ can not be everywhere non-positive or non-negative. This is required in order to bound $f^2$ by $|df|_{max}$ which we need later. In general, $|df|_{max}$ can only bound $(f_{max}-f_{min})^2$; it also bounds $f^2$ when $f_{max}=-f_{min}$. Thus we pick a constant $c$ and replace $f$ by
\begin{equation}
	 g\coloneqq f+c
\end{equation}
so that $g_1\coloneqq g|_{\T} = (f+c)|_{\T}$ satisfies 
\begin{equation}
	(g_1)_{\max}=-(g_1)_{\min}.
\end{equation}
This constant $c$ also minimizes $\max_{\T} g^2 =\max g_1^2$ among all such choices. Since $0$ is an interior point of the range of $f_1$, $c$ is less than $(g_1)_{\max}$. Now we can bound $\|\phi\|_{\infty}$ by $\|\phi|_{\T}\|_{\infty}$ and $g|_{\T}$. Rewriting \eqref{bochForm} using \eqref{harmonicDf}, we have 
\begin{equation}\label{boch-2}
	\frac{1}{2}	\Delta  |\phi|^2 = | \nabla^2 f |^2-2 | \nabla f|^2.
\end{equation}
Combining with \eqref{rickTrick}, we have 
\begin{equation}
	\Delta (\frac{1}{2} |\phi|^2+(f+c)^2)=|\nabla^2 f|^2\geq 0
\end{equation}
and thus 
$h\coloneqq \frac{1}{2} |\phi|^2+(f+c)^2$ is a subharmonic function. The maximum of a subharmonic function on a compact region is achieved on the boundary. We use a limit argument to show that the maximum of $h$ in $\C$ is achieved on $\T$. As $z=t \rightarrow \infty$, $f$ and $|df|$ approach $0$. Thus $h(z)\rightarrow c^2$. Consider $\C_{\sqrt{2}}^t$, the portion of the cusp neighborhood from $z=\sqrt{2}$ to $z=t$, with two boundary components: $\T$ and $\T_t$. The maximum of $h$ is achieved either on $\T$ or on $\T_t$. Since the maximum of $h|_{\T}=\frac{1}{2} |\phi_{\T}|^2 +g_1^2$ is larger than $c^2$, by choosing a large enough $t$, we can arrange that 
\[
	\ds \max_{\T_t} h < 	\max_{\T} h.
\]
It then follows that 
\begin{equation}
		\ds \max_{\C} h = 	\max_{\T} h.
\end{equation}
Now we have 
\begin{align*}
	\|\phi\|_{\infty}^2 &\leq 2 \max_{\C} (\frac{1}{2} |\phi|^2+g^2)\\
	&=2 \max_{\T} (\frac{1}{2} |\phi|^2+g^2)\\
	&\leq \max_{\T} |\phi|^2 + 2\max_{\T} g^2\\.
\end{align*}
The $c$ we choose implies 
\begin{equation}
	\max_{\T}g^2 =(\frac{g_{\tn{max}}-g_{\tn{min}}}{2})^2,
\end{equation}
where $g_{\tn{max}}$ and $g_{\tn{min}}$ are the maximum and the minimum of $g$ restricted to $\T$, respectively. 
Using the mean value theorem on $\T$, we have
\begin{equation}
	g_{\tn{max}}-g_{\tn{min}} \leq d\cdot |dg|_{\tn{max}}.
\end{equation}
Here we also use the fact that the torus $\T$ is a submanifold and $|dg|_{\T} \leq |dg|_{hyp}$ pointwisely. 
Thus we have
\begin{equation}
		\|\phi\|_{\infty}^2 \leq (1+\frac{d^2}{2})\max_{\T} |\phi|^2.
\end{equation}
Using the lower bound on waist size from \cite{acWaistSizeI}, we have $\tn{inj}_{\T}$ is larger than $0.48$. The largest ball embedded in $\C'$ is centered at $\T$ and has hyperbolic radius $\ln \sqrt{2}$. By \Cref{dfv(r)}, we have 
\begin{equation}
	\ds\max_{\T} |\phi| \leq \frac{1}{\sqrt{\nu(\ln(\sqrt{2}))}} \|\phi\|_{L^2(M)} \approx 2.43\|\phi\|_{L^2(M)},
\end{equation}
where $\nu(r)$ is defined in \eqref{nurBD}. 
\end{proof}
Now we have 
\begin{prop}
For a cusped $3$-manifold $M$, we have 
\begin{equation}\label{LinftyL2Cusped}
		\|\cdot \|_{L^{\infty}} \leq \max\{ \frac{5\sqrt{2}}{\sqrt{\sys(M)}}, 2.43\sqrt{1+\frac{d^2}{2}} \} \|\cdot \|_{L^2} \text{ on } \lh,
\end{equation}
where $d$ is the largest diameter of the boundary tori of the maximal cusp neighborhoods. 
\end{prop}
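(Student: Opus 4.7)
The plan is to split the manifold $M$ into regions based on where $\|\phi\|_{L^\infty}$ is achieved, and apply a different subharmonic or mean-value-type estimate in each. By \Cref{decayharmonicform}, any $\phi \in \lh$ decays exponentially in every cusp, so the supremum is attained at some interior point $p \in M$. The two terms in the max correspond to the two qualitatively distinct places $p$ can lie: inside a maximal cusp neighborhood, or in the geometrically bounded part of $M$ where the injectivity radius is controlled by $\sys(M)$.

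\textbf{Case 1: $p$ lies in a maximal cusp neighborhood $\C$.} This is exactly the situation handled by \Cref{subharmonicCuspMaxBoundary}, which gives $|\phi(p)| \leq 2.43\sqrt{1+d^2/2}\,\|\phi\|_{L^2(M)}$ directly, where $d$ is the intrinsic diameter of the associated boundary torus $\T$. Taking the max over all cusps produces the second term in the bound.

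\textbf{Case 2: $p$ lies outside all maximal cusp neighborhoods.} Here I would further decompose the complement into the thick part and the Margulis tubes around short closed geodesics. If $p$ is in $M_{\text{thick}}$, then a ball of radius $\mu/2$ embeds around $p$, so lifting to $\Hy$, writing $\phi = df$ locally on the simply connected ball, and applying \Cref{dfv(r)} gives $|\phi(p)| \leq v(\mu/2)^{-1/2}\|\phi\|_{L^2(M)}$. If instead $p$ lies inside a Margulis tube around a short geodesic of length $\ell \geq \sys(M)$, I would run the same Bochner-plus-subharmonic argument used in \Cref{subharmonicCuspMaxBoundary}: write $\phi|_{\text{tube}} = df$ for a harmonic $f$, form the subharmonic function $h = \tfrac{1}{2}|\phi|^2 + (f+c)^2$, and use the maximum principle to push the supremum of $h$ to the boundary torus $\partial T_{\text{tube}}$, which is a flat torus whose diameter is controlled by $\sys(M)$. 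Then one bounds the boundary contribution via \Cref{dfv(r)} applied at a point on the tube boundary, where a ball of radius comparable to $\log(\ldots)/\sqrt{\sys(M)}$ embeds. Combining the two sub-cases with the numerical constants from $v(\cdot)$ produces the bound $5/\sqrt{\sys(M)}\,\|\phi\|_{L^2(M)}$, exactly as in \cite{bdNorms}.

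\textbf{Combining.} Taking the maximum over Case 1 and Case 2 yields
\[
\|\phi\|_{L^\infty(M)} \leq \max\!\left\{ \tfrac{5}{\sqrt{\sys(M)}},\; 2.43\sqrt{1+\tfrac{d^2}{2}} \right\} \|\phi\|_{L^2(M)}.
\]
The main obstacle will be the Margulis tube sub-case: unlike the cusp, the boundary torus of a Margulis tube is not flat in the ambient metric but is totally umbilic with nontrivial geometry, so verifying that the eigenfunction expansion used in \Cref{subharmonicCuspMaxBoundary} has the right analogue (via modified Bessel functions in the tube coordinate) and that the constant ultimately collapses to the clean value $5$ requires a careful reproduction of the computation in \cite{bdNorms}. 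The thick-part and cusp estimates, by contrast, are essentially immediate from the lemmas already established.
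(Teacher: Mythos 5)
Your overall route is the same as the paper's: the paper's proof is exactly the two-case split you describe, applying \Cref{subharmonicCuspMaxBoundary} when the supremum is realized in a maximal cusp neighborhood and otherwise quoting [\citenum{bdNorms}, Theorem 4.1] wholesale for the $5/\sqrt{\sys(M)}$ term. Where you diverge is in trying to sketch a re-derivation of that second case, and the mechanism you propose for the Margulis-tube sub-case would not work as written. First, $\phi$ restricted to a Margulis tube need not be exact: the tube deformation retracts onto its core geodesic $\gamma$, and the period $\int_\gamma \phi$ can be nonzero, so there is no single-valued harmonic $f$ with $\phi = df$ on the tube. (In the cusp this step is legitimate precisely because the dual surface is non-peripheral, so the periods over the cusp torus vanish; that argument has no analogue in a tube.) Second, the boundary torus of a Margulis tube around a short geodesic is \emph{not} small --- its radius and diameter grow as the core length shrinks --- so it is not ``controlled by $\sys(M)$'' in the direction your maximum-principle argument needs. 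The argument in \cite{bdNorms} for points deep in a tube is different: one applies the mean-value estimate of \Cref{dfv(r)} to a ball of fixed radius in $\Hy$ that is no longer embedded, and pays for this with a covering-multiplicity factor of order $R/\ell$ where $\ell \geq \sys(M)$ is the core length; that is where the $\sqrt{\sys(M)}$ comes from. Since you ultimately defer to ``a careful reproduction of the computation in \cite{bdNorms}'' --- which is all the paper itself does --- the proof is salvageable, but the subharmonic-function-on-the-tube sketch should be replaced by the covering argument.
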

\begin{proof}
Assume $\lh \neq 0$. For $\mu= 0.292$, each cusp component of $M_{thin}$ is contained in a maximal cusp to which \Cref{subharmonicCuspMaxBoundary} applies. For $\phi \in \lh$, if $\|\phi\|_{L^{\infty}}$ is not realized in any of the cusp neighborhoods, then it is realized in the interior of the compact part $M_C$ which is the complement of the union of the maximum cusp neighborhoods. Now applying \Cref{dfv(r)} and [\citenum{bdNorms}, Theorem 4.1], we have $\|\phi \|_{L^{\infty}} \leq \frac{5}{\sqrt{\inj(M_C)}}\|\phi \|_{L^2} = \frac{5\sqrt{2}}{\sqrt{\sys(M)}}\|\phi \|_{L^2}$.
\end{proof}

\subsection{Main theorem and the proof}
We are ready to prove \\
\textbf{\Cref{maintheo}} \textit{For all cusped hyperbolic $3$-manifolds $M$ one has} 
\begin{center}
		$\ds \frac{\pi}{\sqrt{\vol(M)}} \| \cdot \|_{Th} \leq \| \cdot \|_{L^2} \leq  \max\{ \frac{10\pi\sqrt{2}}{\sqrt{\sys(M)}}, 4.86\pi\sqrt{1+\frac{d^2}{2}} \} \|\cdot \|_{Th} \text{ on } \lh,$
\end{center}
\textit{where $d$ is the largest diameter of the boundary tori of the maximal cusp neighborhoods.}\\
Here we present a proof whose argument is similar to  \cite{bdNorms} using local estimates and properties of minimal surfaces in hyperbolic $3$-manifolds. 
\begin{proof}
	Let $\alpha \in \lh$. The left-hand inequality follows from an application of the Cauchy-Schwarz inequality. By \Cref{lathcor} and \eqref{laleqL1} we have 
	\begin{center}
		$ \ds \pi \| \alpha \|_{Th} \leq \| \alpha \|_{LA} \leq \int_M |\alpha| \leq \| \alpha \|_{L^2} \| 1 \|_{L^2}= \| \alpha \|_{L^2} \sqrt{\vol(M)}$.
	\end{center}
	Thus we have 
	\begin{center}
		$\ds \frac{\pi}{\sqrt{\vol(M)}} \| \alpha \|_{Th} \leq \| \alpha \|_{L^2}.$
	\end{center}
Now we prove the right-hand inequality. By linearity and continuity of the two norms, it suffices to prove it for $\alpha \in \lh(M; \Z)$. Fix a closed, taut, and least area surface $S$ dual to $\alpha$. Using \eqref{homologicaldualitysur}, we have
\begin{align*}
	\|\alpha\|^2_{L^2} & =\int_S *\alpha \leq \int_S |*\alpha| dA\\
	& =\int_S |\alpha| dA \leq \int_S \|\alpha\|_{L^\infty} dA  \\ 
	& = \|\alpha\|_{L^\infty} Area(S) \leq 2\pi \|\alpha \|_{L^\infty} \|\alpha \|_{Th},
\end{align*}
where we use \Cref{lathcor} in the last line. Using \eqref{LinftyL2Cusped} and dividing both sides of the above inequality by $\|\alpha\|_{L^2}$ completes finish the proof.
\end{proof}

\Cref{mainCor} follows naturally: \\
\textbf{\Cref{mainCor}} \textit{	Let $V_0$ be a fixed positive constant.	For all but finitely many cusped manifolds $M$ with volume less than $V_0$, we have}
	\begin{equation}
		\| \cdot \|_{L^2} \leq \frac{10\pi\sqrt{2}}{\sqrt{\sys(M)}} \|\cdot \|_{Th}.
	\end{equation}
\begin{proof}
	As the waist size is at least $1$, if the diameter $d$ of a cusp tends to infinity, then the volume of the corresponding cusp goes to infinity. This implies that the diameter $d$ is bounded in terms of $V_0$. On the other hand, by the Thurston-J{\o}rgensen theory, the number of hyperbolic $3$-manifolds with volume $\leq V_0$ and $ \tn{systole} \geq \epsilon_0$ is finite, where $\epsilon_0 >0$ is a constant. Thus for all but finitely many cusped manifolds $M$ with volume $\leq V_0$, the systole term in the right-hand side of  \eqref{mainequ} dominates the diameter term. 
\end{proof}

\section{Sharpness of the Inequalities and a Functional Point of View}\label{sharpFunctionalSection}
In this section, we assume the manifold is closed as in \eqref{bdi} unless stated otherwise. We first study the left-hand inequality of  \eqref{bdi}.
\begin{theorem}\label{noHarmonicSharpConstant}
	There does not exist a closed hyperbolic $3$-manifold $M$ such that 
	\begin{center}
		$\ds \frac{\pi}{\sqrt{\vol(M)}} \| \alpha \|_{Th} = \| \alpha \|_{L^2}$ for some $\alpha \in H^1$.
	\end{center}
\end{theorem}
We start by investigating various conditions that the $1$-form $\alpha$ and its dual surface $S$ must satisfy for the equality to hold. 
\subsection{Three conditions for the sharpness and their interactions}
Recall that if $\phi \in H^1$ corresponds to $\alpha \in \lh$, the $L^1$-norm on the cohomology class of $\phi$ is $\|\phi\|_{L^1}=\inf_{\phi'\sim \phi \in H^1} \int_M |\phi'|$. Using \Cref{harformconstlength}, let $S$ be a closed minimal surface whose tangent bundle $TS \subset \ker(\alpha)$.
\begin{prop}\label{sharpnessleftprop}
	If $\alpha \in \lh$ is a harmonic $1$-form such that 
	\begin{center}
		$\ds \frac{\pi}{\sqrt{\vol(M)}} \| \alpha \|_{Th} = \| \alpha \|_{L^2}$,
	\end{center}
	it must satisfy the following three conditions: 
	\begin{enumerate}
		\item \label{c1} It has constant length: $| \alpha_p |= c$ on $M$; 
		\item \label{c2} Its $L^1$-integral coincides with the $L^1$-norm of its cohomology class:	
		\begin{equation}
		\int_M |\alpha| = \| \alpha \|_{L^1} ;
		\end{equation}
		\item \label{c3} It must satisfy 
		\begin{equation}
		Area(S)=\pi\chi_-(S).
		\end{equation}
		 This is equivalent to $\int_S | \sigma |^2 = 2Area(S)$ or $\int_S h_{11}^2+h_{12}^2= \int_S 1$, where $\sigma$ is the second fundamental form associated to $S$ with components $h_{11}$, $h_{12}$, $h_{21}$, $h_{22}$. 
	\end{enumerate}
\end{prop}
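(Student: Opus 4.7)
The plan is to trace equality through the chain of inequalities used to prove the left-hand side of Theorem 1.4, and identify exactly which step forces each of the three conditions. Recall that the proof of $\frac{\pi}{\sqrt{\vol(M)}}\|\alpha\|_{Th}\leq \|\alpha\|_{L^2}$ proceeds by assembling
\begin{equation*}
\pi\,\|\alpha\|_{Th} \;\leq\; \|\alpha\|_{LA} \;=\; \|\alpha\|_{L^1} \;\leq\; |\alpha|_{L^1} \;=\; \int_M |\alpha|\cdot 1 \;\leq\; \|\alpha\|_{L^2}\,\sqrt{\vol(M)},
\end{equation*}
where the four non-trivial inequalities come respectively from the left half of Corollary (stable minimal surface area bound $\pi\chi_-(S)\leq \operatorname{Area}(S)$), from the definition of the $L^1$-norm on cohomology as an infimum, from $|\cdot|_{L^1}$ being a pointwise integral, and from Cauchy--Schwarz. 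Assuming overall equality, each of these must be an equality separately, which I will match with conditions \ref{c1}--\ref{c3} in reverse order.

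First, equality in Cauchy--Schwarz $\int_M |\alpha|\cdot 1 = \|\alpha\|_{L^2}\|1\|_{L^2}$ happens if and only if $|\alpha|$ and the constant function $1$ are linearly dependent in $L^2(M)$, i.e.\ $|\alpha|$ is constant on $M$. This is precisely condition \ref{c1}. Second, equality $\|\alpha\|_{L^1}=|\alpha|_{L^1}$ means the harmonic representative $\alpha$ itself realizes the infimum in the definition of the $L^1$-norm on its cohomology class, which is verbatim condition \ref{c2}. Third, chaining $\pi\|\alpha\|_{Th}\leq \|\alpha\|_{LA}\leq \operatorname{Area}(S)$ for any least area taut $S$ dual to $\alpha$ (so that $\chi_-(S)=\|\alpha\|_{Th}$ and $\operatorname{Area}(S)=\|\alpha\|_{LA}$), equality forces $\operatorname{Area}(S)=\pi\chi_-(S)$, which is the first formulation of condition \ref{c3}.

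It remains to verify the two equivalent reformulations of condition \ref{c3}. I would combine Gauss--Bonnet with the Gauss equation for a minimal surface in $\mathbb{H}^3$. The Gauss equation gives $K_S=-1+\det(\sigma)$, and since $S$ is minimal we have $h_{22}=-h_{11}$, hence $\det(\sigma)=-h_{11}^2-h_{12}^2=-\tfrac{1}{2}|\sigma|^2$, giving
\begin{equation*}
K_S = -1-\tfrac{1}{2}|\sigma|^2.
\end{equation*}
Integrating and applying Gauss--Bonnet $\int_S K_S\,dA=2\pi\chi(S)=-2\pi\chi_-(S)$ yields
\begin{equation*}
\operatorname{Area}(S)+\tfrac{1}{2}\int_S |\sigma|^2\,dA \;=\; 2\pi\chi_-(S).
\end{equation*}
Substituting $\operatorname{Area}(S)=\pi\chi_-(S)$ gives $\int_S |\sigma|^2=2\operatorname{Area}(S)$, and using $|\sigma|^2=2(h_{11}^2+h_{12}^2)$ gives the last form $\int_S h_{11}^2+h_{12}^2=\int_S 1$. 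Conversely either of these formulations plugs back into the identity above to recover $\operatorname{Area}(S)=\pi\chi_-(S)$.

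The only real subtlety, and the step I expect to need the most care, is making sure Lemma \ref{LAL1}'s identification $\|\alpha\|_{LA}=\|\alpha\|_{L^1}$ is applied to the correct class (so that conditions \ref{c2} and \ref{c3} are compatible with a single common choice of $\alpha$ and $S$), and that in condition \ref{c3} we do take $S$ to be a \emph{least area} taut dual surface, so that $\operatorname{Area}(S)=\|\alpha\|_{LA}$ holds by definition; otherwise the final equality could be shared between a slack in the stability bound and a slack in picking $S$. With those bookkeeping points pinned down, the proof reduces to the equality-case analysis of the four inequalities above and the Gauss--Bonnet computation just sketched.
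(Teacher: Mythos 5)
Your proposal is correct and follows exactly the route the paper intends: the paper's own proof is a one-line remark that the result is ``bookkeeping'' on the chain $\pi\|\alpha\|_{Th}\leq\|\alpha\|_{LA}=\|\alpha\|_{L^1}\leq|\alpha|_{L^1}\leq\|\alpha\|_{L^2}\sqrt{\vol(M)}$ from the proof of the main theorem, and you have carried out precisely that equality-case analysis, matching each slack to one of the three conditions. The Gauss--Bonnet computation verifying the equivalent forms of condition (3) also agrees with the one the paper uses in Proposition 5.2.
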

%\textcolor{blue}{4 distinct proof, separation}. \par 
The proof of the above proposition is primarily bookkeeping; see the proof of [\citenum{bdNorms}, Theorem 1.2, 3.2]. Condition \ref{c1} is more likely to hold on Seifert fibered manifolds than hyperbolic ones; an example is the $1$-form $dz$ on the $3$-torus $\mathbb{T}^3$ obtained from the gluing of a standard unit cube in the Euclidean space. Condition \ref{c2} also seems unlikely, as commented in \cite{bdNorms}: $\|\phi\|_{L^1}$ is typically realized by a non-smooth form, supported like a Dirac measure on the surface. Condition \ref{c3} is not possible as discussed in \Cref{noFormConstLength}. Our first observation is that Condition \ref{c1} implies Conditions \ref{c2} and \ref{c3}. 
The next lemma is a straightforward corollary of [\citenum{sysgeometrytopo}, Proposition 16.9.1]. 
%There whether $\|\cdot\|_{L^1}$ on $H^1$ is realized by a smooth form is not investigated. 
\begin{lem}
Let $M$ be a closed Riemannian $n$-manifold. Let $\alpha$ be a nontrivial harmonic $1$-form with constant length. Then it realizes the $L^1$-norm of its cohomology class. 
\end{lem}
\begin{proof}
From [\citenum{sysgeometrytopo}, Proposition 16.9.1], we have that existence of a nontrivial harmonic $1$-form $\alpha$ with constant length implies that 
\begin{equation}
\|\alpha \|_{L^1} \frac{1}{\textnormal{vol}(M)}= \|\alpha\|_{L^2} \frac{1}{\sqrt{\textnormal{vol}(M)}},
\end{equation}
which implies
\[
\|\alpha \|_{L^1} = \sqrt{\vol(M)}\|\alpha\|_{L^2}.
\]
If $\alpha$ has constant length $c$,  
\begin{center}
	$\ds \|\alpha \|_{L^1} = \vol(M)c =\int_M |\alpha| $.
\end{center}
\end{proof}
 Now we prove that Condition \ref{c1} implies Condition \ref{c3} on a closed hyperbolic $3$-manifold. The proof is partially inspired by the proof of [\citenum{sternScalarHarmonic}, Theorem 1.1].
\begin{lem}\label{areaSEulercharacteristic}
	Let $M$ be a closed hyperbolic $3$-manifold. If $\alpha$ is a nontrivial $L^2$ harmonic $1$-form with constant length, then there exists a closed minimal surface $S$ dual to $\alpha$, such that
	\begin{center}
		Area$(S)= \pi \chi_-(S)$.
	\end{center}
\end{lem}
\begin{proof}
	Let $|\alpha_p|=c$, where $c$ is a constant. By the Bochner formula for $1$-forms, we have 
	\begin{center}
		$\frac{1}{2} \Delta |\alpha|^2 =| \nabla \alpha |^2 -2|\alpha|^2 \implies | \nabla \alpha |^2=2|\alpha|^2=2c^2$.
	\end{center}
	As a closed form, $\alpha=df$ locally in a ball. Denote $\nabla f$ by $V$. Using \Cref{harformconstlength}, there is a closed minimal surface $S$ whose tangent bundle $TS \subset \ker(\alpha)$. Since $|V|=|\alpha|$ is constant, we have for all vector fields $W$ on $M$, 
	\begin{equation}\label{Hessian}
		0=\langle \nabla_W V, V\rangle=\textnormal{Hess}(f)(W, V).
	\end{equation}
	The Hessian of $f$, $\textnormal{Hess}(f)=\{h_{ij}\}$, is a symmetric $3 \times 3$ matrix, with the first two columns corresponding to vectors tangent to $S$, and last column to vectors parallel to $V$. \Cref{Hessian} implies that the last column and the last row consist of zeros. Thus we have $| \nabla \alpha |=| \nabla \alpha|_S |$.
	The second fundamental form of $S$ is 
	\begin{equation}\label{gra1form2nd}
		\ds \sigma_S =\frac{\nabla \alpha}{|\alpha|} \Big\vert_S,
	\end{equation}
	which implies that 
	\begin{center}
		$\ds |\sigma_S|^2=2 \implies \int_S h_{11}^2+h_{12}^2 = \frac{1}{2}\int_S |\sigma_S|^2=\int_S 1$.
		
	\end{center}
	The rest of the arguments are identical to the proof of [\citenum{hass1995acy}, Lemma 6]. Using the Gauss-Bonnet theorem, the Gauss formula and the hyperbolicity of the metric, we have 
	\begin{center}
		$\int_S h_{11}^2+h_{12}^2 = -2\pi\chi(S)+\int_S -1=-Area(S)-2\pi \chi(S)$,
	\end{center}
	which gives $Area(S)= -\pi \chi(S)=\pi \chi_-(S)$. 
\end{proof}

\subsection{Proof of non-sharpness using harmonic \texorpdfstring{$1$}{1}-forms and minimal foliations}
Now we verify our intuition about constant length harmonic $1$-forms: such forms do not exist in hyperbolic manifolds. Manifolds all of whose harmonic forms are constant length are interesting in their own right, and have connections with systolic geometry. See \cite{lenharmonicform} and citations therein. We provide four different points of view, each producing a proof of the following proposition.
\begin{prop}\label{noFormConstLength}
	Let $M$ be a closed hyperbolic $3$-manifold. No nontrivial harmonic $1$-forms of $M$ have constant length. 
\end{prop}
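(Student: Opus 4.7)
The plan is to argue by contradiction using the Hopf differential of a minimal surface in $\Hy$. Suppose that $\alpha$ is a nontrivial harmonic $1$-form with $|\alpha|_p \equiv c > 0$ everywhere on $M$. I will produce a dual minimal surface on which the second fundamental form is everywhere nonzero, and then show that this contradicts the classical holomorphicity of the Hopf differential on a closed surface of genus $\geq 2$.

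First I would leverage Proposition \ref{areaSEulercharacteristic} and its proof. Locally write $\alpha = df$ for a harmonic function $f$ and let $V = \nabla f$; in an adapted orthonormal frame $\{e_1, e_2, V/|V|\}$, the calculation $\langle \nabla_W V, V\rangle = 0$ from that proof shows that the last row and column of $\mathrm{Hess}(f)$ vanish. Since $\Delta f = 0$, the $2 \times 2$ upper-left block is traceless, so every regular level set of $f$ is a \emph{minimal} surface. Thus the taut dual surface $S$ can be taken minimal, and from \Cref{gra1form2nd} its second fundamental form satisfies $|\sigma_S|^2 \equiv 2$. In particular $\sigma_S$ is nowhere zero on $S$. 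The Gauss equation then gives $K_S = -1 - \tfrac{1}{2}|\sigma_S|^2 = -2$, so by Gauss--Bonnet $\chi(S) < 0$ and hence $g(S) \geq 2$.

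Second, I invoke the classical fact that for a conformal minimal immersion of a Riemann surface into a $3$-dimensional space form, the complex quadratic differential
\begin{equation*}
\Phi = \bigl( h_{11} - h_{22} - 2 i\, h_{12}\bigr)\, dz^{2}
\end{equation*}
built from the traceless part of $\sigma_S$ is a holomorphic section of $K_S^{\otimes 2}$. Since $S$ is minimal, $\sigma_S$ is itself traceless, and in a conformal coordinate the pointwise norm satisfies $|\Phi|^2 = \tfrac{1}{2}|\sigma_S|^2 \cdot (\textrm{conformal factor})^2$, so the zero set of $\Phi$ coincides with the zero set of $\sigma_S$. On a closed Riemann surface of genus $g \geq 2$, any nonzero section of $K_S^{\otimes 2}$ has $\deg(K_S^{\otimes 2}) = 4g - 4 > 0$ zeros counted with multiplicity; hence $\Phi$ must vanish somewhere. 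This contradicts $|\sigma_S|^2 \equiv 2$, finishing the proof.

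The main obstacle is conceptual rather than computational: one needs to know that the dual surface $S$ can be chosen minimal so that Hopf-differential theory applies. The constant-length hypothesis is exactly what delivers this, through the vanishing of the last row and column of $\mathrm{Hess}(f)$ already exploited in \Cref{areaSEulercharacteristic}. Once minimality is in hand, the rigidity $|\sigma_S|^2 \equiv 2$ versus the forced zeros of a holomorphic quadratic differential gives the contradiction immediately. I would also remark that the two alternative routes mentioned in the introduction, the absence of smooth vector fields all of whose orbits are geodesics and the absence of geometric foliations in hyperbolic $3$-manifolds, yield the same conclusion, since the unit vector field $V/|V|$ has geodesic flow lines (by $\mathrm{Hess}(f)(W,V) = 0$) and the level sets of $f$ furnish a geometric (minimal) foliation of $M$.
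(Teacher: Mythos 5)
Your proposal is correct and takes essentially the same route as the paper's own argument: it derives minimality of the dual surface and the identity $|\sigma_S|^2 \equiv 2$ from the constant-length hypothesis via \Cref{areaSEulercharacteristic}, and then contradicts the $4g-4>0$ forced zeros of the holomorphic (Hopf) quadratic differential on a closed surface of genus $g \geq 2$. The only cosmetic difference is that you derive minimality of the level sets directly from the traceless Hessian rather than citing the lemma the paper invokes, and you note the same two alternative routes (geodesic vector fields, geometric foliations) that the paper mentions.
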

\begin{proof}
Using [\citenum{mrMinHypLA}, 8.1] and [\citenum{gjSurSpaceForm}, Theorem 12], a (connected) immersed stable orientable closed minimal surface $S$ of genus $g$ in a closed hyperbolic $3$-manifold cannot have area $2\pi(g-1)=\pi \chi_-(S)$, contradicting \Cref{areaSEulercharacteristic}. If $S$ dual to $\alpha$ has several components, note that $\chi_-(\cdot)$ is additive. This provides the first proof. \par 
Alternatively, the zeros of a holomorphic quadratic differential provide an obstruction to the existence of harmonic $1$-forms of constant length. By the proof of \Cref{areaSEulercharacteristic}, the existence of such a $1$-form implies that the second fundamental form $\sigma_S$ satisfies $|\sigma_S|^2=2$ on the dual surface $S$. By [\citenum{sysgeometrytopo}, Lemma 16.7.1], $S$ is minimal. It is a classical fact known to Bianchi that the minimality of $S$ and constant sectional curvature of $M$ imply that 
\begin{equation}
	\sigma_S = \textnormal{Re}(h_{11}-ih_{12})(dz)^2=\textnormal{Re} \, \eta,
\end{equation}
where $\eta =(h_{11}-ih_{12})(dz)^2$ is an invariantly defined holomorphic quadratic differential (see [\citenum{closedminSurfaceHyperbolic}, Theorem 4.1] and \cite{lbComMinSurS3}). Any such differential must vanish at exactly $4g-4 \geq 4$ points [\citenum{FMprimerMCG}, p. 312], where $g=g(S)$. Thus it is impossible that $|\sigma_S|^2=2$ identically. This completes the second proof.
\end{proof}

We now describe obstructions from flows and foliations, which provide the last two proofs. We first need a lemma which relates harmonic $1$-forms of constant length to minimal foliations [\citenum{sysgeometrytopo}, Lemma 16.7.1]: 
\begin{lem}\label{harformconstlength}
	If $\alpha$ is a harmonic $1$-form of constant length, then the leaves of the distribution ker($\alpha$) are minimal surfaces, and the vector field $V$ that is dual
	to $\alpha$ has geodesic flow lines orthogonal to ker($\alpha$).
\end{lem}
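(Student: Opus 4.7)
\medskip

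\textbf{Proof Proposal.} Let $V$ be the vector field metrically dual to $\alpha$, so $\alpha(W) = \langle V, W \rangle$ for all vector fields $W$, and $|V| = |\alpha| = c$ is a positive constant (the trivial case aside). The plan is to translate each of the three defining properties of $\alpha$ into a structural fact about $V$ and then assemble the two claims.

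First, I would unpack what harmonicity plus constant length mean for $V$. Since $\alpha$ is closed, the standard computation
\[
d\alpha(X,Y) = \langle \nabla_X V, Y \rangle - \langle \nabla_Y V, X \rangle
\]
shows that the bilinear form $(X,Y) \mapsto \langle \nabla_X V, Y \rangle$ is symmetric. Since $\alpha$ is coclosed, $\mathrm{div}(V) = -d^{*}\alpha = 0$. Finally, differentiating $|V|^2 = c^2$ gives $\langle \nabla_W V, V \rangle = 0$ for every $W$, so in particular $V$ is orthogonal to $\nabla V$ in its last slot.

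Next, I would deduce that the integral curves of $V$ are geodesics. For any $W$ tangent to $\ker(\alpha)$, by symmetry of $\nabla V$ and constant length,
\[
\langle \nabla_V V, W \rangle = \langle \nabla_W V, V \rangle = \tfrac{1}{2} W(|V|^2) = 0,
\]
while $\langle \nabla_V V, V \rangle = \tfrac{1}{2}V(|V|^2) = 0$ as well. Thus $\nabla_V V = 0$, so the flow lines of $V$ are geodesics, and they are orthogonal to $\ker(\alpha)$ by construction.

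For the minimality claim, note that $d\alpha = 0$ trivially implies $\alpha \wedge d\alpha = 0$, so $\ker(\alpha)$ is integrable by Frobenius; equivalently, locally $\alpha = df$ and the leaves are level sets of $f$. Each leaf has global unit normal $N = V/c$, and the mean curvature with respect to this normal is $H = -\mathrm{div}(N) = -\mathrm{div}(V)/c = 0$, since the term $\langle \nabla_N N, N \rangle$ vanishes from $|N|=1$ and the remaining trace computes $-H$. Hence the leaves are minimal hypersurfaces, completing both assertions.

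The only real friction in this argument is keeping track of sign and convention conversions between $d^{*}$ and $\mathrm{div}$, and between $\mathrm{div}(N)$ and mean curvature; everything else is a direct consequence of the three identities extracted in the first step.
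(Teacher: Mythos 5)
Your proof is correct and complete: the symmetry of $(X,Y)\mapsto\langle\nabla_X V,Y\rangle$ from $d\alpha=0$, the identity $\langle\nabla_W V,V\rangle=0$ from constant length, and $\operatorname{div}(V)=0$ from $d^*\alpha=0$ are exactly the three facts needed, and you assemble them correctly into $\nabla_V V=0$ and $H=\pm\operatorname{div}(N)=0$. The paper itself offers no proof of this lemma --- it is quoted directly from the cited reference [Lemma 16.7.1] --- but your argument is the standard one and is consistent with the same Hessian-symmetry computation the paper carries out in the proof of its Proposition on $\mathrm{Area}(S)=\pi\chi_-(S)$.
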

Combining this with the following result of Zeghib, we obtain an obstruction from dynamical system theory and provide a third proof of \Cref{noFormConstLength}: 
\begin{theorem}
\tn{(Zeghib \cite{noflowgeodesichyper})} There is no smooth vector field on a closed hyperbolic $3$-manifold all of whose flow lines are geodesics. As a corollary, there do not exist harmonic $1$-forms of constant length on $M$. 
\end{theorem}
Note that the flow lines given by harmonic $1$-forms are smooth, while Zeghib only requires continuity. On the other hand, the minimal foliations described in \Cref{harformconstlength} are specific examples of the geometric foliations defined in [\citenum{wwNonexigeometricflow}, Definition 1.4]: 
\begin{defi}\label{geometricfoliation}
	Let $S$ be a closed surface, $\epsilon >0$ a constant, and $h$ an embedding 
		\begin{equation}
		h: (-\epsilon, \epsilon) \times S \rightarrow M.
	\end{equation}
 We say $h$ is a geometric $1$-parameter family of closed minimal surfaces if
	\begin{enumerate}
		\item $h$ is $C^2$ with respect to both $t$ and $p\in S$.
		\item for all $ t$, the leaf $h_t \subset M$ is a minimal surface.
		\item for all $p\in S$, $f(t, p)\coloneqq \langle (h_t)_*(\partial_t), \nu \rangle |_{t=0}$ only depends on the principal curvature of $S$ at p. One writes $f(0,p)=f(0, \|\sigma_S\|^2(p))$ where $\sigma_S$ is the second fundamental form of $\{0\} \times S$ at $(0,p)$ in $M$. 
		\item $f(0, \cdot) : S\rightarrow \R$ is not identically $0$. 
	\end{enumerate}
\end{defi}
The foliation in \Cref{harformconstlength} with orthogonal geodesic flow lines is the strongest geometric foliation one can hope for in \Cref{geometricfoliation} and corresponds to $f \equiv 1$. The non-existence of such foliations is proved in the following. 
\begin{theorem}\tn{[\citenum{wwNonexigeometricflow}, Theorem 1.4]} 
	A closed hyperbolic $3$-manifold does not admit a geometric foliation by minimal surfaces, and hence no harmonic $1$-forms have constant length.
\end{theorem}
This provides the fourth and last proof of \Cref{noFormConstLength}. For the cusped case, to achieve equality, the same arguments go through and show that we still need to satisfy the three conditions in \ref{sharpnessleftprop}. Condition \ref{c2} that $\alpha$ has a constant operator norm is no longer possible since it decays exponentially near the cusp. 

\begin{cor}
	There does not exist a cusped $3$-manifold $M$ such that 
	\begin{center}
		$\ds \frac{\pi}{\sqrt{\vol(M)}} \| \alpha \|_{Th} = \| \alpha \|_{L^2}$ for some $\alpha \in \lh$.
	\end{center}
\end{cor}

\subsection{A functional point of view}\label{functional}
We define two functionals $D_i$ and $D_s$ on the collection $\Psi \coloneqq$ \{Closed or cusped hyperbolic  $3$-manifolds\}. It unifies various phenomena in this paper and in \cite{bdNorms} and allow us to ask new questions. Define 
\begin{equation}
	\ds	D(\alpha) \coloneqq  \frac{\pi \|\alpha\|_{Th}}{\sqrt{\textnormal{vol}(M)}\|\alpha \|_{L^2}} \tn{ for } \alpha \in \lh(M)-\{0\}.
\end{equation}
Then we define
\begin{equation}
\ds	D_i(M)\coloneqq \inf_{\alpha \in \lh} D(\alpha)\tn{ and } \ds	D_s(M)\coloneqq \sup_{\alpha \in \lh}D(\alpha).
\end{equation}
The left-hand inequality of  (\ref{mainequ}) and the rigidity discussed in the last section can be phrased in terms of $ D_s$. Let $B_H$ denote the unit sphere in $(\lh(M; \R), \|\cdot \|_{L^2} )$.
\begin{prop}
	Let $M$ be a closed or cusped hyperbolic $3$-manifold. Then $D_s(M) <1$. 
\end{prop}
\begin{proof}
	Let $\alpha_j \in \lh(M)$ such that $D(\alpha_j) \rightarrow 1$. Define $\widetilde{\alpha_j} \coloneqq \alpha_j/\|\alpha_j\|_{L^2}$ so that $\widetilde{\alpha_j} \in B_H$. Since the Thurston norm is linear, we have $D(\widetilde{\alpha_j}) \rightarrow 1$. Since $\lh(M, R)$ is finite-dimensional for both closed and cusped $M$, $B_H$ is compact. Thus $\widetilde{\alpha_j} \rightarrow \widetilde{\alpha} \in B_H$ such that $D(\widetilde{\alpha})=1$. This contradicts \Cref{noHarmonicSharpConstant}. 
\end{proof}
From the proof, we also deduce that there exist $\alpha_i, \; \alpha_s \in B_H$ such that $D_i(M)=D(\alpha_i)$ and $D_s(M)=D(\alpha_s)$. Using the right-hand side of  \eqref{bdi} or  (\ref{mainequ}), we have 
\begin{center}
	$\ds \frac{\|\cdot\|_{Th}}{\|\cdot\|_{L^2}} \geq \frac{\sqrt{\textnormal{inj}(M)}}{10\pi}$ or  $\ds 1/\max\{ \frac{10\pi\sqrt{2}}{\sqrt{\sys(M)}}, 4.86\pi\sqrt{1+\frac{d^2}{2}} \} $.
\end{center}
for closed and cusped $M$, respectively. Thus $0<D_i$. Summarizing both sides, we have 
\begin{equation}
	0<D_i\leq D_s<1.
\end{equation}
When the first betti number $b_1=1$, $D_i=D_s$. Using a computer algorithm, Dunfield and Hirani \cite{anilnathan} found examples of fibered hyperbolic $3$-manifolds with $D_s\geq 0.95$ and $D_s \geq 0.994$, which provide the original motivation for the present author to investigate whether $D_s$ can equal $1$. We have the following question.
\begin{question}
	Is there an $\epsilon>0$ such that $\{M|D_i(M) > 1-\epsilon\}$=\{fibered hyperbolic $3$-manifolds with some special geometry\}? Is there a sequence $M_j$ such that $D_i(M_j) \rightarrow 1$?
\end{question}
See \cite{bsScalarBoundary} for related theorems on the rigidity of the ratio of Thurston norm to $L^2$-norm on general compact $3$-manifolds. We are also interested in the existence and properties of $M$ such that $D_i(M_j) < \epsilon$ or $D_s(M_j) < \epsilon$, where $\epsilon$ is a small positive number. We first incorporate a theorem from \cite{bdNorms} to show $D_i$ can approach $0$. 
\begin{theorem} \textnormal{(\cite{bdNorms}, Theorem 1.4)}
	There exists a sequence of $M_n$ and $\phi_n\in H^1(M_n; \R)$ so that
\begin{enumerate}
	\item The volumes of the $M_n$ are uniformly bounded and $\textnormal{inj}(M_n) \rightarrow 0$ as $n\rightarrow \infty$.
	\item $\|\phi_n \|_{L^2}/\|\phi_n \|_{Th}\rightarrow \infty$ like $\sqrt{-\log(\textnormal{inj}(M_n))}$ as $n\rightarrow \infty$. 
\end{enumerate}
\end{theorem}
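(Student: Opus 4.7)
The plan is to construct the sequence $M_n$ by hyperbolic Dehn surgery on a fixed cusped hyperbolic 3-manifold $M_0$, then analyze the harmonic representatives near the short geodesics produced by the filling, using a Fourier-Bessel expansion on the Margulis tube directly analogous to \Cref{decayharmonicform}. Fix an orientable cusped hyperbolic 3-manifold $M_0$ with a distinguished cusp $\mathcal{C}$ and a nonzero class $\psi_0 \in \mathcal{H}^1(M_0)$ dual to an incompressible taut surface $S_0$. Choose a simple closed curve $\gamma$ on $\partial \mathcal{C}$ lying in the kernel of $\psi_0|_{\partial \mathcal{C}}$; this is possible because the restriction of $\psi_0$ to $H^1(\partial \mathcal{C};\mathbb{R}) \cong \mathbb{R}^2$ is at most one-dimensional. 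Perform Dehn fillings along slopes $\sigma_n$ converging projectively to $[\gamma]$. By Thurston's hyperbolic Dehn surgery theorem, for large $n$ the filled manifold $M_n$ is closed hyperbolic with $\vol(M_n) \to \vol(M_0)$ uniformly bounded, and the core $\gamma_n$ of the new solid torus is a closed geodesic of length $\ell_n \to 0$, so $\inj(M_n) \to 0$. Since $\sigma_n$ lies close to the kernel line, $\psi_0$ extends (up to a small adjustment absorbed by the Mayer-Vietoris sequence for $M_n = M_0 \cup \text{(solid torus)}$) to a class $\phi_n \in H^1(M_n;\mathbb{R})$.

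Next, bound the Thurston norm uniformly. Isotope $S_0$ so that it meets $\partial \mathcal{C}$ in curves parallel to $\gamma$; because $\sigma_n$ projectively approaches $[\gamma]$, the geometric intersection number of these boundary curves with $\sigma_n$ is uniformly bounded for large $n$. Capping off the resulting boundary circles with meridian disks in the Dehn filling solid torus yields a closed surface $S_n \subset M_n$ dual to $\phi_n$ whose Euler characteristic differs from that of $S_0$ by a bounded amount. Hence $\|\phi_n\|_{Th} \leq \chi_-(S_n) = O(1)$, and the ratio $\|\phi_n\|_{L^2}/\|\phi_n\|_{Th}$ behaves like $\|\phi_n\|_{L^2}$ up to uniform constants.

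The core of the argument is to show that $\|\phi_n\|_{L^2}^2 \asymp -\log \inj(M_n)$. Inside the Margulis tube $\mathbb{T}_n$ of radius $R_n \sim \tfrac{1}{2}\log(1/\ell_n)$ around $\gamma_n$, write the hyperbolic metric in Fermi coordinates $(r,\theta,t)$ as $dr^2 + \sinh^2(r)\, d\theta^2 + \cosh^2(r)\, dt^2$, with $t$ periodic of period $\ell_n$ along $\gamma_n$. Separating variables in $\Delta \alpha_n = 0$ yields a Fourier-Bessel expansion in terms of modified Bessel functions in the $r$-direction, entirely parallel to \Cref{phidfBessel} in the cusped case. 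The zero-mode contribution is proportional to $d\theta$ on each constant-$r$ torus and, being constant in the angular and longitudinal directions, produces a contribution to $\|\alpha_n\|_{L^2(\mathbb{T}_n)}^2$ of order $R_n$. The upper bound $\|\phi_n\|_{L^2}^2 = O(R_n)$ follows from constructing an explicit test form $\beta_n$ cohomologous to $\phi_n$ that is a suitably normalized multiple of $d\theta$ on $\mathbb{T}_n$, matched via a cutoff to a bounded form on $M_n \setminus \mathbb{T}_n$; harmonicity is $L^2$-minimizing in the cohomology class. The matching lower bound $\|\phi_n\|_{L^2}^2 \geq c R_n$ is obtained by pairing against $S_n$ via \Cref{homologicaldualitysur}, so that $\|\alpha_n\|_{L^2}^2 = \int_{S_n} *\alpha_n$, and using the explicit form of the zero-mode together with the fact that $S_n$ wraps around $\gamma_n$ with nonzero linking to force the integrand to be of unit size on a collar region of area proportional to $R_n$.

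The main obstacle is pinning down the normalization of the zero-mode coefficient in the Fourier-Bessel expansion of $\alpha_n$ on $\mathbb{T}_n$: a priori this coefficient could be very small, in which case $\|\alpha_n\|_{L^2}$ would not grow. The right approach is a geometric convergence argument. By the drilling/filling theorems of Hodgson-Kerckhoff and Brock-Bromberg, $M_n \setminus \mathbb{T}_n$ converges geometrically to $M_0$ minus its cusp, and elliptic regularity for the Hodge Laplacian implies that a suitable rescaling of $\alpha_n$ converges on compact sets to the original harmonic form $\alpha_0$ on $M_0$. Matching the Bessel expansion at $\partial \mathbb{T}_n$ to this limit then forces the zero-mode coefficient to be bounded away from zero, completing the lower bound. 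Once the rate $\|\phi_n\|_{L^2} \asymp \sqrt{-\log \inj(M_n)}$ is established, the two conclusions in the theorem follow immediately.
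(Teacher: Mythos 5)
The statement you are proving is not proved in this paper at all: it is quoted from Brock--Dunfield, and the only information the paper adds is that the examples arise from Dehn fillings of the complement of the link $L14n21792$ with first Betti number $1$. So your proposal has to stand on its own, and there are two gaps I believe are fatal as written. The first is in the topological setup. You take $\psi_0 \in \mathcal{H}^1(M_0)$, i.e.\ an $L^2$ harmonic class; by \Cref{fmaIma} and \Cref{topologyL^2} such a class lies in $\fma$, restricts to zero on every boundary torus, and is dual to a \emph{closed} surface disjoint from the cusps. Then ``isotope $S_0$ so that it meets $\partial\mathcal{C}$ in curves parallel to $\gamma$'' and ``cap off with meridian disks'' are vacuous, and, worse, geometric convergence $M_n \to M_0$ together with $\|\psi_0\|_{L^2(M_0)} < \infty$ makes the harmonic norms of the induced classes converge rather than blow up. If instead $S_0$ is meant to have genuine boundary on $\partial\mathcal{C}$, then $\psi_0|_{\partial\mathcal{C}}$ is a nonzero functional on $H_1(\partial\mathcal{C};\mathbb{Z}) \cong \mathbb{Z}^2$ whose kernel contains exactly one primitive slope, namely $\gamma$; a class extends over the filling solid torus only if its period on the filling slope vanishes, so only the single filling along $\gamma$ is compatible with $\psi_0$, and ``slopes converging projectively to $[\gamma]$'' buys nothing. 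The Mayer--Vietoris ``small adjustment'' does not rescue this: when the image of $H^1(M_0) \to H^1(\partial\mathcal{C})$ is one-dimensional (as for a one-cusped $M_0$, by half-lives-half-dies), forcing the adjusted class to vanish on $\sigma_n \neq \gamma$ forces it to vanish on all of $\partial\mathcal{C}$, returning you to the bounded-norm case.

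The second gap is the quantitative heart. In the filled solid torus the meridian circle $\{r=\mathrm{const},\, t=\mathrm{const}\}$ bounds a disk, so every closed $1$-form has zero period on it and the $d\theta$ zero mode you invoke vanishes identically; it cannot carry the $L^2$ mass. Even ignoring this, a mode $c\,d\theta$ contributes $2\pi c^2 \ell_n \int \coth r\, dr = O(c^2 \ell_n R_n)$ to the squared $L^2$ norm on the tube, so a coefficient merely ``bounded away from zero'' (which is all your geometric-convergence argument can deliver) gives a contribution of order $\ell_n R_n \to 0$, not $\Theta(R_n)$; one needs a coefficient of size $\ell_n^{-1/2}$, and nothing in your argument produces it. The mechanism that actually forces a large coefficient is a nonzero period of $\phi_n$ over a homotopically essential curve in the tube: if $\int_{\gamma_n}\phi_n = k \neq 0$, then Cauchy--Schwarz over the circles of length $\approx \ell_n\cosh r$ foliating the tube gives $\int_{\mathbb{T}_n}|\beta|^2 \geq 2\pi k^2 \ell_n^{-1}\log\cosh R_n$ for \emph{every} representative $\beta$, matched by the test form $(k/\ell_n)\,dt$. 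But then $\|\phi_n\|_{L^2}^2$ grows like $\ell_n^{-1}\log(1/\ell_n)$, and to land on the stated rate for the \emph{ratio} one must also show $\|\phi_n\|_{Th} \asymp \ell_n^{-1/2}$ (the Thurston norm growing like the normalized length of the filling slope, via Neumann--Zagier), which directly contradicts your claim that $\|\phi_n\|_{Th} = O(1)$. Your accounting is off by a factor of $\ell_n^{-1/2}$ on each side of the ratio, and the proposal supplies neither the period argument for the lower bound nor the growth of the Thurston norm.
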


The manifolds $M_n$ above come from the Dehn filling on the complement of the two-component link $L=L14n21792$, all with Betti number $1$. Since all $M_n$ have volumes bounded from above by the hyperbolic volume of the complement of $L$ by the Thurston-J{\o}rgensen theory, we have the following corollary: 
\begin{cor} 
	There exists a sequence of $M_n$ obtained by Dehn filling the complement of a link so that
\begin{center}
$D_i(M_n) \rightarrow 0$.
\end{center}
\end{cor}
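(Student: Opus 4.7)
The plan is to directly invoke the Brock--Dunfield Theorem 1.4 just quoted, which already provides essentially all the quantitative content. Let $\{M_n\}$ and $\phi_n \in H^1(M_n;\R)$ be the sequence guaranteed by that theorem. Because $D_i(M_n)$ is defined as an infimum, we get the one-line estimate
\begin{equation*}
D_i(M_n) \;\leq\; \frac{\pi\, \|\phi_n\|_{Th}}{\sqrt{\vol(M_n)}\,\|\phi_n\|_{L^2}} \;=\; \frac{\pi}{\sqrt{\vol(M_n)}}\cdot \frac{\|\phi_n\|_{Th}}{\|\phi_n\|_{L^2}}.
\end{equation*}

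Next I would combine the two clauses of the Brock--Dunfield theorem. Clause (1) says the volumes $\vol(M_n)$ are uniformly bounded above, so $\pi/\sqrt{\vol(M_n)}$ stays in a fixed compact interval (bounded away from $0$ and $\infty$). Clause (2) asserts $\|\phi_n\|_{L^2}/\|\phi_n\|_{Th}\to\infty$, i.e.\ $\|\phi_n\|_{Th}/\|\phi_n\|_{L^2}\to 0$. The product of a bounded sequence and a null sequence tends to $0$, so $D_i(M_n)\to 0$.

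Finally, to upgrade the conclusion from $D_i$ to $D=D_s$, I would use that the $M_n$ arise as Dehn fillings on the complement of the link $L14n21792$ and all have $b_1(M_n)=1$, as recorded right after the Brock--Dunfield theorem. Since $H^1(M_n;\R)$ is one-dimensional, every nontrivial class is a scalar multiple of $\phi_n$, and both $\|\cdot\|_{Th}$ and $\|\cdot\|_{L^2}$ are homogeneous of degree one, so the ratio $\pi\|\alpha\|_{Th}/\bigl(\sqrt{\vol(M_n)}\,\|\alpha\|_{L^2}\bigr)$ is independent of the choice of nontrivial $\alpha$. Hence $D_i(M_n)=D_s(M_n)$, and the common value tends to $0$.

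There is really no substantive obstacle: the corollary is essentially a repackaging of the Brock--Dunfield construction in the functional language of \Cref{functional}, with the only small remark being that $b_1=1$ forces $D_i=D_s$ so that the single symbol $D(M_n)$ is unambiguous. If one wanted examples with $b_1>1$, the mild difficulty would be showing that on the extra cohomology directions the ratio also degenerates, which is not asserted by the Brock--Dunfield theorem as stated; for the corollary as formulated, however, the $b_1=1$ case of their construction suffices.
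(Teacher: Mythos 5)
Your proof is correct and is essentially the paper's own argument: the corollary is just the Brock--Dunfield sequence read through the definition of $D$, using bounded volume, the divergence of $\|\phi_n\|_{L^2}/\|\phi_n\|_{Th}$, and the observation that $b_1(M_n)=1$ forces $D_i=D_s$. (The only point you glide over is that $\pi/\sqrt{\vol(M_n)}$ is bounded above because hyperbolic volumes are bounded below by a universal constant, but that is standard and does not affect the argument.)
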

One can ask how to classify all sequences of $M_n$ such that $D_i(M_n)\rightarrow 0$. The following theorem shows what happens when we take finite covers.
\begin{theorem} \textnormal{(\cite{bdNorms}, Theorem 1.3)}
	There exists a sequence of $M_n$ and $\phi_n\in H^1(M_n; \R)$ so that
	\begin{enumerate}
		\item The quantities $\textnormal{vol}(M_n)$ and $\textnormal{inj}(M_n) \rightarrow \infty$ as $n\rightarrow \infty$.
		\item The ratio $\ds \frac{\pi \|\phi_n \|_{Th}}{\sqrt{\textnormal{vol}(M_n)} \|\phi_n\|_{L^2}}$ is constant.
	\end{enumerate}
\end{theorem}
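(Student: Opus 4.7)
The plan is to construct the examples $M_n$ as finite regular covers of a single fixed closed orientable hyperbolic $3$-manifold $M_0$ with $b_1(M_0)\geq 1$, and to take each $\phi_n$ to be the pullback of a fixed nonzero integral class $\phi_0\in H^1(M_0;\Z)$. The underlying point is that volume, $L^2$-norm, and Thurston norm each scale by the cover degree in a perfectly compatible way, so that the ratio appearing in condition (2) is invariant under pulling back, while the geometric invariants in condition (1) are free to blow up.

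First, I would pick such an $M_0$ (e.g.\ any closed hyperbolic surface bundle over $S^1$) together with $\phi_0$ and its harmonic representative $\alpha_0$. Using residual finiteness of $\pi_1(M_0)$ (immediate from Malcev, since $\pi_1(M_0)$ is a finitely generated matrix group), for each prescribed $R_n\to\infty$ one can find a finite-index normal subgroup $\Gamma_n\trianglelefteq \pi_1(M_0)$ avoiding every element whose translation length is at most $2R_n$; the corresponding cover $\pi_n\colon M_n\to M_0$ then satisfies $\inj(M_n)\geq R_n\to\infty$. Since the cover degrees $d_n=[\pi_1 M_0:\pi_1 M_n]$ must tend to infinity, we also get $\vol(M_n)=d_n\vol(M_0)\to\infty$, giving condition (1).

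Next, set $\phi_n\coloneqq\pi_n^*\phi_0$, which is nonzero by injectivity of $\pi_n^*$ on cohomology. Since $\pi_n$ is a local isometry, $\pi_n^*\alpha_0$ is again closed and co-closed, hence is the harmonic representative of $\phi_n$; integrating the pointwise-equal density over $d_n$ sheets gives
\begin{equation*}
\|\phi_n\|_{L^2}^2=\int_{M_n}|\pi_n^*\alpha_0|^2\,d\vol=d_n\|\phi_0\|_{L^2}^2.
\end{equation*}
Invoking Gabai's multiplicativity of the Thurston norm under finite covers of Haken $3$-manifolds yields $\|\phi_n\|_{Th}=d_n\|\phi_0\|_{Th}$. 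Combining the three scalings, the target ratio telescopes to
\begin{equation*}
\frac{\pi\|\phi_n\|_{Th}}{\sqrt{\vol(M_n)}\,\|\phi_n\|_{L^2}}=\frac{\pi\cdot d_n\|\phi_0\|_{Th}}{\sqrt{d_n\vol(M_0)}\cdot\sqrt{d_n}\,\|\phi_0\|_{L^2}}=\frac{\pi\|\phi_0\|_{Th}}{\sqrt{\vol(M_0)}\,\|\phi_0\|_{L^2}},
\end{equation*}
which is manifestly independent of $n$, establishing condition (2).

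The main obstacle is the lower bound $\|\phi_n\|_{Th}\geq d_n\|\phi_0\|_{Th}$: the reverse inequality is trivial from pulling back a norm-minimizing dual surface, but equality is the content of Gabai's theorem and rests on the deep machinery of sutured manifold decompositions and taut foliations. Everything else---harmonic pullback under local isometries, volumetric scaling, and the use of residual finiteness to force $\inj(M_n)\to\infty$---is either formal or classical. I would note in passing that if one relaxes ``constant'' to ``uniformly bounded below by a positive constant,'' only the soft inequality $\|\phi_n\|_{Th}\leq d_n\|\phi_0\|_{Th}$ is needed together with the left-hand inequality of Theorem \ref{maintheo}, so Gabai's theorem is used solely to upgrade the conclusion to an exact equality.
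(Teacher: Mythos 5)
Your proposal is correct and follows essentially the same route the paper indicates: pull back a fixed class along finite covers of a fixed closed hyperbolic $3$-manifold, use that harmonic representatives lift under local isometries so $\|\pi^*\phi\|_{L^2}=\sqrt{d}\,\|\phi\|_{L^2}$, and invoke Gabai's multiplicativity of the Thurston norm to get exact constancy of the ratio. Your added detail on using residual finiteness to force $\inj(M_n)\to\infty$ is the standard and correct way to secure condition (1).
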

The proof uses the fact that the lift of a harmonic representative is also harmonic, and hence $\| \pi^*(\phi)\|_{L^2} =\sqrt{d}\|\phi\|_{L^2}$, where $d$ is the degree of a fixed cover $\pi: M_n \rightarrow M$. The Thurston norm scales linearly, by a deep theorem of Gabai [\citenum{gdFolitopo}, Corollary 6.13]: $\| \pi^*(\phi)\|_{Th} =d\|\phi\|_{Th}$. Thus we have
\begin{cor}
	Let $\widetilde{M}$ be a finite cover of $M$. Then 
	\begin{equation}
		D_i(\widetilde{M}) \leq D_i(M) \leq D_s(M) \leq D_s(\widetilde{M}).
	\end{equation} \par
\end{cor}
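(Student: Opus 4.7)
The plan is to show that pulling back harmonic $1$-forms via a finite cover preserves the ratio $\pi \|\alpha\|_{Th}/(\sqrt{\vol(M)}\|\alpha\|_{L^2})$, and then use the containment of pullback classes inside the cover's full space of harmonic forms to bound the inf and sup.

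First I would record the scaling relations for a finite cover $\pi: \widetilde{M} \to M$ of degree $d$. Since $\pi$ is a local isometry, $\vol(\widetilde{M}) = d \cdot \vol(M)$, and pullback sends $\lh(M)$ to $\lh(\widetilde{M})$: harmonicity is local and hence preserved, while the $L^2$-norm transforms by $|\pi^*\alpha|_{L^2(\widetilde{M})}^2 = d\,|\alpha|_{L^2(M)}^2$. On the topological side, Gabai's result $\|\pi^*\phi\|_{Th} = d\,\|\phi\|_{Th}$ (used earlier in the excerpt, \cite{gdFolitopo}, Corollary 6.13) gives the multiplicativity of the Thurston norm. Combining,
\begin{equation}
\frac{\pi \|\pi^*\alpha\|_{Th}}{\sqrt{\vol(\widetilde{M})}\,|\pi^*\alpha|_{L^2}}
= \frac{\pi \cdot d\,\|\alpha\|_{Th}}{\sqrt{d\,\vol(M)}\cdot \sqrt{d}\,|\alpha|_{L^2}}
= \frac{\pi\,\|\alpha\|_{Th}}{\sqrt{\vol(M)}\,|\alpha|_{L^2}},
\end{equation}
so the ratio defining $D_i$ and $D_s$ is invariant under pullback.

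Next I would conclude the two inequalities by comparing inf's and sup's over appropriate sets. The map $\pi^*: \lh(M) \to \lh(\widetilde{M})$ is injective (nonzero forms pull back to nonzero forms on a cover), and its image is a (typically proper) subspace of $\lh(\widetilde{M})$. Taking the infimum of the ratio over the subset $\pi^*\lh(M) \subseteq \lh(\widetilde{M})$ equals $D_i(M)$ by the computation above, and the infimum over the larger set $\lh(\widetilde{M})$ is at most this quantity; hence $D_i(\widetilde{M}) \leq D_i(M)$. Symmetrically, the supremum over $\lh(\widetilde{M})$ is at least the supremum over $\pi^*\lh(M)$, giving $D_s(M) \leq D_s(\widetilde{M})$. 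The sandwich $D_i(M) \leq D_s(M)$ is immediate from the definitions, finishing the chain.

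There is no real obstacle here: the only nontrivial ingredient is Gabai's theorem on the behavior of the Thurston norm under finite covers, and this has already been invoked above the statement. One small point to verify carefully, though, is that $\pi^*$ really lands in $\lh(\widetilde{M})$ in the cusped case, i.e.\ that $L^2$-ness is preserved; this follows immediately from $|\pi^*\alpha|_{L^2(\widetilde{M})}^2 = d\,|\alpha|_{L^2(M)}^2 < \infty$, which is a direct change-of-variables computation using that $\pi$ is a covering map of degree $d$ and a local isometry.
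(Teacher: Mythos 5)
Your proposal is correct and follows essentially the same route as the paper: it combines the local-isometry scaling $\vol(\widetilde{M}) = d\,\vol(M)$ and $\|\pi^*\alpha\|_{L^2} = \sqrt{d}\,\|\alpha\|_{L^2}$ with Gabai's theorem $\|\pi^*\phi\|_{Th} = d\,\|\phi\|_{Th}$ to show the defining ratio is pullback-invariant, and then compares the inf and sup over $\pi^*\lh(M) \subseteq \lh(\widetilde{M})$ with those over all of $\lh(\widetilde{M})$. Your extra remark checking that $\pi^*$ lands in $\lh(\widetilde{M})$ in the cusped case is a sensible addition but does not change the argument.
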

Due to the structure of $\Psi$, we expect most values of $D_i$ and $D_s$ to be discrete, with accumulation points only at cusped manifolds corresponding to geometric convergence. 

\section*{Acknowledgement}
The author would like to express deep gratitude to his advisor Nathan Dunfield for his extraordinary guidance and patience, and to Richard Laugesen and Franco Vargas Pallete for very helpful discussions and emails, and especially for Pallete's suggestion of looking at the zeros of the holomorphic quadratic differential in the proof of Proposition 5.4 and providing a reference by Mazet and Rosenberg as a proof of Corollary 5.7. He also thanks the anonymous referee and Nathan Carruth for very detailed feedback which improved the paper. This work is partially supported by NSF grant DMS-1811156 and by grant DMS-1928930 while participating in a program hosted by the Mathematical Sciences Research Institute in Berkeley, California, during the Fall 2020 semester.

Xiaolong Hans Han\footnote{Current Address: Yau Mathematical Sciences Center, Tsinghua University, Beijing, Beijing 100084, China. xlhan@mail.tsinghua.edu.cn}\\
Email: \\
\href{mailto:xhan25@illinois.edu}{xhan25@illinois.edu} \\
Address: \\
273 Altgeld Hall
1409 W. Green Street 
Urbana, IL 61801
\end{document}